\newdimen\arrowsize
\newcommand{\independent}{\mbox{${}\perp\mkern-11mu\perp{}$}}
\newcommand{\notindependent}{\mbox{${}\not\!\perp\mkern-11mu\perp{}$}}
\DeclareMathOperator*{\argmin}{argmin}
\newcommand{\B}[1]{\mathbf{#1}}
\newcommand{\checknew}{\mathring}
\newcommand{\mb}{\mathbf}
\newcommand{\mbb}{\boldsymbol}
\newcommand{\prob}{{\mathbb P}}
\newcommand{\pr}{{\mathbb P}}
\newcommand{\R}{{\mathbb R}}
\newcommand{\E}{{\mathbb E}}
\newcommand{\var}{{\mathbf{var}}}
\newcommand{\cov}{{\mathbf{cov}}}  
\newcommand{\tr}{{\mathbf{tr}}}
\newcommand{\given}{\mid}
\newcommand{\abs}[1]{\left|#1\right|}
\newcommand{\indist}{\stackrel{d}{\to}}
\newcommand{\inprob}{\stackrel{P}{\to}}
\newcommand{\floor}[1]{\left\lfloor #1 \right\rfloor}
\newcommand{\ind}{\mathbbm{1}}
\newcommand{\tv}[1]{\left\| #1 \right\|_{\text{TV}}}
\newtheorem{theorem}{Theorem}
\newtheorem{example}[theorem]{Example}
\newtheorem{lemma}[theorem]{Lemma}
\newtheorem{corollary}[theorem]{Corollary}
\newtheorem{proposition}[theorem]{Proposition}
\newtheorem{remark}[theorem]{Remark}
\tikzset{every picture/.style={line width=0.6pt}}
\tikzset{every picture/.style={outer sep=.4mm}}
\tikzstyle{graphnode} = 
\tikzstyle{observed}   =[graphnode,fill=white,text=black]
\tikzstyle{unobserved}   =[graphnode,fill=white,text=black,style=dashed]
\tikzstyle{graphnodesmall} = 
\tikzstyle{observedsmall}   =[graphnodesmall,fill=white,text=black]
\tikzstyle{unobservedsmall}   =[graphnodesmall,fill=white,text=black,style=dashed]
\tikzstyle{graphnodestiny} = 
\tikzstyle{observedtiny}   =[graphnodestiny,fill=white,text=black]
\tikzstyle{unobservedtiny}   =[graphnodestiny,fill=white,text=black,style=dashed]
\title{The Hardness of Conditional Independence Testing and the Generalised Covariance Measure}%
\author{Rajen D. Shah\thanks{RDS was supported by The Alan Turing Institute under the EPSRC grant EP/N510129/1, an EPSRC Programme Grant and an EPSRC First Grant.}\\
University of Cambridge, UK\\
\url{r.shah@statslab.cam.ac.uk}
\and
Jonas Peters\thanks{JP was supported by  a research grant (18968) from VILLUM FONDEN and the Alan Turing Institute supported JP's research visit in London (July 2017).}\\
University of Copenhagen, Denmark\\
\url{jonas.peters@math.ku.dk}}
\date{\today}
\begin{document}
\maketitle

\begin{abstract}
It is a common saying that testing for conditional independence, i.e., testing whether whether two random vectors $X$ and $Y$ are independent, given $Z$, is a hard statistical problem if $Z$ is a continuous random variable (or vector). In this paper, we prove that conditional independence is indeed a particularly difficult hypothesis to test for. Valid statistical tests are required to have a size that is smaller than a predefined significance level, and different tests usually have power against a different class of alternatives. We prove that a valid test for conditional independence does not have power against \emph{any} alternative. 

Given the non-existence of a uniformly valid conditional independence test, we argue that tests must be designed so their suitability for a particular problem  may be judged easily. To address this need, we propose in the case where $X$ and $Y$ are univariate to nonlinearly regress $X$ on $Z$, and $Y$ on $Z$ and then compute a test statistic based on the sample covariance between the residuals, which we call the generalised covariance measure (GCM). We prove that validity of this form of test relies almost entirely on the weak requirement that the regression procedures are able to estimate the conditional means $X$ given $Z$, and $Y$ given $Z$, at a slow rate. We extend the methodology to handle settings where $X$ and $Y$ may be multivariate or even high-dimensional. While our general procedure can be tailored to the setting at hand by combining it with any regression technique, we develop the theoretical guarantees for kernel ridge regression.
A simulation study shows that the test based on GCM is competitive with state of the art conditional independence tests. 
Code is available as the R package \texttt{GeneralisedCovarianceMeasure} on CRAN.
\end{abstract}

\begin{cbunit}

\section{Introduction}
Conditional independences lie at the heart of several fundamental concepts such as sufficiency \citep{Fisher20} and ancillarity \citep{Fisher34, Fisher35}; see also 
\citet{Sorensen2017}. \citet{Dawid79} states that ``many results and
theorems concerning these concepts are just applications of some simple general properties
of conditional independence''. 
During the last few decades,
conditional independence relations have played an 
increasingly important  
role in 
computational statistics, too, since they are the building blocks of 
graphical models \citep{Lauritzen1996, Koller2009, Pearl2009}.

Estimating conditional independence graphs has been of great interest in high-dimensional statistics, particularly in biomedical applications \citep[e.g.][]{markowetz2007inferring, Dobra2004}. 
To estimate the conditional independence graph corresponding to a random vector $X \in \R^d$, edges may be placed between vertices corresponding to $X_j$ and $X_k$ if a test for whether $X_j$ is conditionally independent of $X_k$ given all other variables indicates rejection.

Conditional independence tests also play a key role in causal inference.
Constraint-based or independence-based methods \citep{Pearl2009, Spirtes2000, Peters2017book} apply a series of conditional independence tests in order to learn causal structure from observational data.
The recently introduced invariant prediction methodology \citep{Peters2016jrssb, Heinze2017} aims to estimate for a specified target variable $Y$, the set of causal variables among potential covariates $X_1,\ldots, X_d$. Given data from different environments labelled by a variable $E$, the method involves testing for each subset $S \subseteq \{1, \ldots, d\}$, the null hypothesis that the environment $E$ is conditionally independent of $Y$, given $X_S$.

Given the importance of conditional independence tests in modern statistics, there has been great deal of work devoted to developing conditional independence tests; we review some important examples of tests in Section~\ref{sec:related_work}. However one issue that conditional independence tests appear to suffer from is that they can fail to control the type I error in finite samples, which can have important consequences in downstream analyses.

In the first part of this paper, we prove this failure of type I error control is in fact unavoidable: conditional independence is not a testable hypotheses.
To fix ideas, consider $n$ i.i.d.\ observations corresponding to a triple of random variables $(X, Y, Z)$ where it is desired to test whether $X$ is conditional independent of $Y$ given $Z$.
We show that provided the joint distribution of $(X, Y, Z) \in \R^{d_X + d_Y + d_Z}$ is absolutely continuous with respect to Lebesgue measure, any test based on the data whose size is less than a pre-specified level $\alpha$, has no power; more precisely, there is no alternative, for which the test has power more than $\alpha$. 
This result is perhaps surprising as it is in stark contrast to unconditional independence testing, for which permutation tests allow for the correct calibration of any hypothesis test.
Our result implies that in order to perform conditional independence testing, some domain knowledge is required in order to select an appropriate conditional independence test for the particular problem at hand.  This would appear to be challenging in practice, as the validity of conditional independence tests typically rests on properties of the entire joint distribution of the data, which may be hard to model. 

Our second main contribution aims to alleviate this issue by providing a family of conditional independence tests whose validity relies on estimating the conditional expectations $\E(X |Z=z)$ and $\E(Y|Z=z)$ via regressions, in the setting where $d_X=d_Y=1$. These need to be estimated sufficiently well using the data such that the product of mean squared prediction errors from the two regressions is $o(n^{-1})$. This is a relatively mild requirement that allows for settings where the conditional expectations are as general as Lipschitz functions, for example, and also encompasses settings where $Z$ is high-dimensional but the conditional expectations have more structure.

Our test statistic, which we call the \emph{generalised covariance measure} (GCM) is based on a suitably normalised version of the empirical covariance between the residual vectors from the regressions. The practitioner is free to choose the regression methods that appear most suitable for the 
problem of interest.
Although domain knowledge is still required to make an appropriate choice, selection of regression methods is a problem statisticians are more familiar with.
We also extend the GCM to handle settings where $X$ and $Y$ are potentially high-dimensional, though in this case our proof of the validity of the test additionally requires the errors $X_j - \E(X_j|Z)$ and $Y_k - \E(Y_k|Z)$ to obey certain moment restrictions for $j=1,\ldots,d_X$ and $k=1,\ldots,d_Y$ and slightly faster rates of convergence for the prediction errors.

As an example application of our results on the GCM, we consider the case where the regressions are performed using kernel ridge regression, and show that provided the conditional expectations are contained in a reproducing kernel Hilbert space, our test statistic has a tractable limit distribution.

The rest of the paper is organised as follows. In Sections~\ref{sec:condind} and \ref{sec:testing}, we first formalise the notion of conditional independence and relevant concepts related to statistical hypothesis testing. In Section~\ref{sec:related_work} we review some popular conditional independence tests,
after which we set out some notation used throughout the paper. In Section~\ref{SEC:NOFREELUNCH} we present our main result on the hardness of conditional independence testing. We introduce the generalised covariance measure in Section~\ref{SEC:GCM} first treating the univariate case with $d_X=d_Y=1$ before extending ideas to the potentially high-dimensional case. In Section~\ref{sec:kernel} we apply the theory and methodology of the previous section to study that particular example of generalised covariance measures based on kernel ridge regression. We present numerical experiments in Section~\ref{sec:experiments} and conclude with a discussion in Section~\ref{sec:discussion}. All proofs are deferred to the appendix and supplementary material.

\subsection{Conditional independence} \label{sec:condind}
Let us consider three random vectors $X$, $Y$ and $Z$ taking values in $\R^{d_X}$, $\R^{d_Y}$ and $\R^{d_Z}$, respectively, and let us assume, for now that their joint distribution is absolutely continuous with respect to Lebesgue measure with density $p$. 
For our deliberations only the continuity in $Z$ is
necessary,
see Remark~\ref{rem:continuity}.
We say that $X$ is conditionally independent of $Y$ given $Z$ and write 
$$
X \independent Y \given Z
$$
if for all
$
x, y, z$ with $p(z) > 0$, we have $p(x,y|z) = p(x|z) p(y|z)$,
see, e.g., \citet{Dawid79}. 
Here and below, statements involving densities should be understood to hold (Lebesgue) almost everywhere.
We now discuss an equivalent formulation of conditional independence that has given rise to several hypothesis tests, including the generalised covariance measure proposed in this paper.
Let therefore 
$
L^2_{X,Z}
$ denote the space of all functions $f:\R^{d_X} \times \R^{d_Z} \rightarrow \R$ such that $\E f(X,Z)^2 < \infty$ and define $L^2_{Y,Z}$ analogously.
\citet{Daudin1980} proves that $X$ and $Y$ are conditionally independent given $Z$ if and only if 
\begin{equation} \label{eq:Daudin}
\E f(X,Z) g(Y,Z) = 0
\end{equation}
for all 
functions 
$f \in L^2_{X,Z}$ and 
$g \in L^2_{Y,Z}$ such that 
$\E[f(X,Z)|Z] = 0$ and 
$\E[g(Y,Z)|Z] = 0$, respectively.

This may be viewed as an extension of the fact that 
 for one-dimensional $X$ and $Y$, the partial correlation coefficient 
$\rho_{X,Y|Z}$ (the correlation between residuals of linear regressions of $X$ on $Z$ and $Y$ on $Z$) is 0 if and only if $X \independent Y \given Z$ in the case where $(X, Y, Z)$ are jointly Gaussian.

\subsection{Statistical hypothesis testing 
and notation} \label{sec:testing}
We now introduce some notation and relevant concepts related to statistical hypothesis testing.
In order to deal with composite null hypotheses where the probability of rejection must be controlled under a variety of different distributions for the data to which our test is applied, we introduce the following notation. We will write $\E_P(\cdot)$ for expectations of random variables whose distribution is determined by $P$, and similarly $\pr_P(\cdot) = \E_P \ind_{\{\cdot\}}$.

Let $\mathcal{P}$ be a potentially composite null hypothesis consisting of a collection of distributions for $(X, Y, Z)$.
For $i=1,2, \ldots$ let $(x_i, y_i, z_i) \in \R^{d_X + d_Y + d_Z}$ be i.i.d.\ copies of $(X, Y, Z)$ and let $\mb X^{(n)} \in \R^{d_X \cdot n}$, $\mb Y^{(n)} \in \R^{d_Y \cdot n}$ and $\mb Z^{(n)} \in \R^{d_Z \cdot n}$ be matrices with $i$th rows $x_i$, $y_i$ and $z_i$ respectively. Let $\psi_n$ be a potentially randomised test
that can be applied to the
data $(\mb X^{(n)}, \mb Y^{(n)}, \mb Z^{(n)})$; formally,
\begin{equation*}
\psi_n: \R^{(d_X + d_Y + d_Z)\cdot n} \times [0,1] \rightarrow \{0,1\}
\end{equation*}
is a measurable function whose last argument is reserved for a random variable $U \sim U[0, 1]$ independent of the data which is responsible for the randomness of the test.

Given a sequence of tests $(\psi_n)_{n=1}^\infty$, the following validity properties will be of interest; %
note the particular names given to these properties differ in literature.
Given a level $\alpha \in (0, 1)$ and null hypothesis $\mathcal{P}$, we say that the test $\psi_n$ has 
$$
\text{\emph{valid level at sample size $n$} if }
\quad
\sup_{P \in \mathcal{P}}
\mathbb{P}_P(\psi_n = 1) \leq \alpha,
$$
where the left-hand side is the \emph{size} of the test; the sequence $(\psi_n)_{n=1}^\infty$ has
\begin{gather*}
\text{ \emph{uniformly asymptotic level} if }
\quad
\limsup_{n \rightarrow \infty} 
\sup_{P \in \mathcal{P}}
\mathbb{P}_P(\psi_n = 1) \leq \alpha,\\
\text{\emph{pointwise asymptotic level} if }
\quad
\sup_{P \in \mathcal{P}}
\limsup_{n \rightarrow \infty} 
\mathbb{P}_P(\psi_n = 1) \leq \alpha.
\end{gather*}
In practice, we would like a test to have at least uniformly asymptotic level. 
Otherwise, 
even for an arbitrarily large sample size $n$,
there can exist null distributions for which
the size exceeds the nominal 
level by some fixed amount.

Given a sequence of tests $(\psi_n)_{n=1}^\infty$ each with valid level $\alpha \in (0, 1)$ and alternative hypotheses $\mathcal{Q}$, it is desirable for the power to be large uniformly over $\mathcal{Q}$, and to have
$\inf_{Q \in \mathcal{Q}} \pr_Q(\psi_n=1) \to 1$. In standard parametric settings, we can certainly achieve this for any fixed alternative hypothesis and indeed have uniform power against a sequence of $\sqrt{n}$ alternatives.
Nonparametric problems are much harder and when $\mathcal{Q}$ contains all distributions outside a small fixed total variation (TV) neighbourhood of $\mathcal{P}$, we have
\[
\liminf_{n \to \infty} \inf_{Q \in \mathcal{Q}} \pr_Q(\psi_n=1) < 1,
\]
\citep[Prop.~2, Thm.~3]{lecam1973convergence, barron1989uniformly}.
To achieve power tending to 1, we need to restrict $\mathcal{Q}$ by imposing certain smoothness conditions for example
 \citep{balakrishnan2017hypothesis}. 

A class of even harder hypothesis testing problems may be defined as those where no test with valid level achieves power at any alternative, so $\sup_{Q \in \mathcal{Q}} \pr_Q(\psi_n=1) \leq \alpha$. In other words,  for all $n$, tests $\psi_n$ and alternative distributions $Q \in \mathcal{Q}$, we have
\[
\pr_Q (\psi_n =1) \leq \sup_{P \in \mathcal{P}} \pr_P(\psi_n=1).
\]
The hypothesis testing problem defined by the pair $(\mathcal{P}, \mathcal{Q})$ is then said to be \emph{untestable} \citep{dufour2003identification}.
In order to have power at even a single alternative, we need to restrict the null $\mathcal{P}$ in some way.
One of the main results of this paper is that conditional independence is untestable.

\subsection{Related work} \label{sec:related_work}

Our hardness result for conditional independence contributes to an important literature on impossibility results in statistics and econometrics starting with the work of \citet{bahadur1956} which shows that there is no non-trivial test for whether a distribution has mean zero. \citet{canay2013} shows that certain problems arising in the context
of identification of some nonparametric models are not testable.
In these examples, the null hypothesis is dense with respect to the TV metric in the alternative hypothesis, a property which implies untestability \citep{romano2004non}. Interestingly, our Proposition~\ref{prop:separation} shows that conditional independence testing is qualitatively different in that
some
distributions in the alternative are in fact well-separated from the null.
It has been suggested for some time that conditional independence testing is a hard problem (see, e.g., \citep{Bergsma2004}, and several talks given by Bernhard Sch\"olkopf). To the best of our knowledge the conjecture that conditional independence is not testable (cf.~Corollary~\ref{cor:nfl} with $M=\infty$) is due to Arthur Gretton.
We also note that when the conditional distribution of $X$ given $Z$ is known, conditional independence is testable \citep[e.g.][]{berrett2018conditional}. 

We now briefly review
several tests for conditional independence that bear some relation to our proposal here.
\paragraph{Extensions of partial correlation}
\citet{Ramsey2014}
suggests 
regressing
$X$ on $Z$ and $Y$ on $Z$ and then testing for independence between the residuals. 
\citet{Fan2015} consider this approach in the setting where $Z$ is potentially high-dimensional and under the null hypothesis of $X \independent Y \given Z$, $X=Z^T\beta_X + \varepsilon_X$, $Y=Z^T\beta_Y + \varepsilon_Y$ with $\varepsilon_X \independent Z$ and $\varepsilon_Y \independent Z$. 
The following simple example however indicates where such methods can fail. 
\begin{example} \label{ex:hetero}
Define $N_X, N_Y$ and $Z$ to be i.i.d.\ random variables with distribution 
$\mathcal{N}(0,1)$ and define $X:= Z\cdot N_X$, $Y:=Z\cdot N_Y$. This
implies $X \independent Y \given Z$. Since $\E[X|Z] = \E[Y|Z] = 0$, the (population) residuals equal 
$R_1 := Z\cdot N_X$
and
$R_2 := Z\cdot N_Y$;
they are uncorrelated
but %
not independent since, e.g.,
$\mathrm{cov}(R_1^2, R_2^2) \neq 0$.
Consider regressing $X$ on $Z$ and $Y$ on $Z$, and then testing for independence 
of the residuals.
If the regression method outputs the true conditional means and 
the independence test has power against the alternative
$\mathrm{cov}(R_1^2, R_2^2) \neq 0$, 
the method will falsely reject the null hypothesis of conditional independence with large probability.
\end{example}
\paragraph{Kernel-based conditional independence tests}
The Hilbert-Schmidt independence criterion (HSIC) equals the square of the Hilbert--Schmidt
norm of the cross-covariance operator, 
and is used in 
unconditional independence testing \citep{Gretton2008}.
\citet{Fukumizu2008} 
extend this idea to conditional independence testing. 
To construct a test for continuous variables $Z$,
their work requires clustering of the values of $Z$ and permuting $X$ and $Y$ values within the same cluster component. 
Another extension is proposed by \citet{Zhang2011uai}.
Their kernel conditional independence (KCI) test is stated to yield pointwise asymptotic level control.

\paragraph{Estimation of expected conditional covariance}
Though typically not thought of as conditional independence tests, there are several approaches to estimating the expected conditional covariance functional $\E\cov(X, Y |Z)$ in the semiparametric statistics literature \citep{robins2008higher, chernozhukov2017double, newey2018cross}. 
From \eqref{eq:Daudin} we see these may be used as conditional independence tests and indeed the GCM test we propose falls under this category.  We delay further discussion of such methods to Section~\ref{sec:semiparametric}.

\subsection{Notation} \label{sec:notation}
We now introduce some notation used throughout the paper.
If $(V_{P,n})_{n \in \mathbb{N}, P \in \mathcal{P}}$ is a family of sequences of random variables whose distributions are determined by $P \in \mathcal{P}$, we use $V_{P,n}=o_\mathcal{P}(1)$ and $V_{P,n} = O_\mathcal{P}(1)$ to mean respectively that for all $\epsilon > 0$,
\begin{gather*}
\sup_{P \in \mathcal{P}} \pr_P(|V_{P,n}| > \epsilon) \to 0, \;\; \text{ and} \\
\text{there exists } M>0 \text{ such that } \sup_{n \in \mathbb{N}} \sup_{P \in \mathcal{P}} \pr_P(|V_{P,n}| > M) < \epsilon.
\end{gather*}
If $(W_{P,n})_{n \in \mathbb{N}, P \in \mathcal{P}}$ is a further family of sequences of random variables, $V_{P,n}=o_\mathcal{P}(W_{P,n})$ and $V_{P,n}=O_\mathcal{P}(W_{P,n})$ mean $V_{P,n} = W_{P,n} R_{P,n}$ and respectively that $R_{P,n}=o_\mathcal{P}(1)$ and $R_{P,n}=O_\mathcal{P}(1)$.
If $\B{A}$ is a $c \times d$ matrix, then $\B{A}_j$ denotes the $j$th column of $\B{A}$, $j \in \{1, \ldots, d\}$.

\section{No-free-lunch in Conditional Independence Testing}\label{SEC:NOFREELUNCH}
In this section we show that, under certain conditions, no non-trivial test  for conditional independence with valid level exists. To state our result, we introduce the following subsets of $\mathcal{E}_0$ defined to be the set of all distributions for $(X, Y, Z)$ absolutely continuous with respect to Lebesgue measure.

Let $\mathcal{P}_0 \subset \mathcal{E}_0$ be the subset of distributions under which $X \independent Y \given Z$. Further, for any $M \in (0, \infty]$, let $\mathcal{E}_{0,M} \subseteq \mathcal{E}_0$ be the subset of all distributions with support contained strictly within an $\ell_\infty$ ball of radius $M$. Here we take $\mathcal{E}_{0,\infty} = \mathcal{E}_0$. We also define $\mathcal{Q}_0=\mathcal{E}_0 \setminus \mathcal{P}_0$ and set $\mathcal{P}_{0,M} = \mathcal{E}_{0,M} \cap \mathcal{P}_0$, and $\mathcal{Q}_{0,M} = \mathcal{E}_{0,M} \cap \mathcal{Q}_0$.
Consider the setup of Section~\ref{sec:testing} with null hypothesis $\mathcal{P} = \mathcal{P}_{0,M}$. Our first result shows that with this null hypothesis, any test $\psi_n$ with valid level at sample size $n$ has no power against any alternative.

\begin{theorem}[No-free-lunch] \label{THM:NFL}
Given any $n \in \mathbb{N}$,
$\alpha \in (0, 1)$,
$M \in (0,\infty]$, 
and any
potentially randomised test $\psi_n$ that has valid level $\alpha$ 
for the 
null hypothesis $\mathcal{P}_{0,M}$,
we have that
$\prob_Q(\psi_n=1) \leq \alpha$ for all $Q \in \mathcal{Q}_{0,M}$. 
Thus $\psi_n$ cannot have power against any alternative.
\end{theorem}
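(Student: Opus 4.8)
Fix $n$, $\alpha$, $M$, a (possibly randomised) test $\psi_n$ with valid level $\alpha$ over $\mathcal{P}_{0,M}$, and an alternative $Q\in\mathcal{Q}_{0,M}$. The plan is to show that for every $\epsilon>0$ the $n$-sample law $Q^{\otimes n}$ can be approximated in total variation, to within $\epsilon$, by a \emph{mixture} $\int_\Theta P_\theta^{\otimes n}\,d\mu(\theta)$ of $n$-fold products of distributions $P_\theta$, each lying in the null $\mathcal{P}_{0,M}$. Granting this, the conclusion follows in a line: writing $\bar\psi_n\in[0,1]$ for $\psi_n$ with its auxiliary $U[0,1]$ randomisation averaged out, so $\prob_Q(\psi_n=1)=\E_{Q^{\otimes n}}[\bar\psi_n]$,
\[
\prob_Q(\psi_n=1)\;\le\;\int_\Theta \E_{P_\theta^{\otimes n}}[\bar\psi_n]\,d\mu(\theta)+\epsilon\;=\;\int_\Theta \prob_{P_\theta}(\psi_n=1)\,d\mu(\theta)+\epsilon\;\le\;\sup_{P\in\mathcal{P}_{0,M}}\prob_P(\psi_n=1)+\epsilon\;\le\;\alpha+\epsilon,
\]
and letting $\epsilon\downarrow 0$ gives $\prob_Q(\psi_n=1)\le\alpha$. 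It is essential that we approximate by a genuine mixture of null distributions rather than by a single one: projecting onto a single coordinate cannot increase total variation, so a lone $P\in\mathcal{P}_0$ that was TV-close to $Q^{\otimes n}$ would have to be TV-close to $Q$ itself, which is impossible for alternatives well separated from $\mathcal{P}_0$ (cf.\ Proposition~\ref{prop:separation}); a mixture of conditionally independent laws need not be conditionally independent, so no such obstruction applies to it.

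To build the mixture I would exploit the continuity of $Z$. Fix $\delta>0$ and partition $\R^{d_Z}$ into cells $\{C_k\}_k$ of $\ell_\infty$-diameter at most $\delta$; let $q$ be the density of $Q$, $q_Z$ its $Z$-marginal density, $q_{X,Y\mid Z=z}$ the conditional density of $(X,Y)$ given $Z=z$, and $q_{X,Y\mid Z\in C_k}$ the conditional law of $(X,Y)$ given $Z\in C_k$ (for cells with $q_Z(C_k)>0$). Fix also a probability density $\kappa_\eta$ supported in the $\ell_\infty$-ball of radius $\eta$ about the origin. Let $\Theta$ be the set of families $\theta=(x^\ast_k,y^\ast_k)_k$ with $(x^\ast_k,y^\ast_k)\in\mathrm{supp}(q_{X,Y\mid Z\in C_k})$, let $\mu$ draw the pairs $(x^\ast_k,y^\ast_k)$ independently over $k$ with $(x^\ast_k,y^\ast_k)\sim q_{X,Y\mid Z\in C_k}$, and let $P_\theta$ have density
\[
p_\theta(x,y,z)\;=\;q_Z(z)\,\kappa_\eta\!\big(x-x^\ast_{k(z)}\big)\,\kappa_\eta\!\big(y-y^\ast_{k(z)}\big),
\]
where $k(z)$ indexes the cell containing $z$. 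By inspection $p_\theta(x,y\mid z)=p_\theta(x\mid z)\,p_\theta(y\mid z)$, so $X\independent Y\mid Z$ under $P_\theta$; $P_\theta$ is absolutely continuous; and if $\eta$ is smaller than the gap between $\mathrm{supp}(Q)$ and the boundary of the $\ell_\infty$-ball of radius $M$ (a vacuous requirement when $M=\infty$), its support lies strictly inside that ball. Hence $P_\theta\in\mathcal{P}_{0,M}$ for $\mu$-almost every $\theta$.

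For the total-variation bound I would couple the two $n$-samples through their common $Z$-marginal $q_Z^{\otimes n}$, and work on the event $G$ that $z_1,\dots,z_n$ fall in pairwise distinct cells. Conditionally on $(z_1,\dots,z_n)$, under $Q^{\otimes n}$ the pairs $(x_i,y_i)$ are independent with $(x_i,y_i)\sim q_{X,Y\mid Z=z_i}$; under the mixture, on $G$ each cell is used at most once, so its latent pair is a fresh draw and the $(x_i,y_i)$ are again independent, now with $(x_i,y_i)\sim(\kappa_\eta\otimes\kappa_\eta)\ast q_{X,Y\mid Z\in C_{k(z_i)}}$. Using convexity of $\tv{\cdot}$ under disintegration and its subadditivity over product factors,
\begin{align*}
\tv{Q^{\otimes n}-\int_\Theta P_\theta^{\otimes n}\,d\mu(\theta)}\;\le\;&\prob_{q_Z^{\otimes n}}(G^c)\;+\;n\,\E_{z\sim q_Z}\tv{q_{X,Y\mid Z=z}-q_{X,Y\mid Z\in C_{k(z)}}}\\
&+\;n\sup_{k:\,q_Z(C_k)>0}\tv{q_{X,Y\mid Z\in C_k}-(\kappa_\eta\otimes\kappa_\eta)\ast q_{X,Y\mid Z\in C_k}}.
\end{align*}
I would then let $\delta\to0$ and afterwards $\eta\to0$: the first term is at most $\binom{n}{2}\sup_k q_Z(C_k)$, which tends to $0$ because $q_Z$ is non-atomic; the second tends to $0$ by the Lebesgue differentiation theorem applied to $z\mapsto q(\cdot,\cdot,z)$ (after a harmless truncation of $Z$ to a compact set of mass $\ge 1-\epsilon$, which also makes the relevant index set finite); and the third tends to $0$ by continuity of convolution on $L^1$, over the now-finite collection of cells.

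The step I expect to be the main obstacle is the middle term: quantifying the convergence of the cell-averaged conditionals $q_{X,Y\mid Z\in C_k}$ to the pointwise conditionals $q_{X,Y\mid Z=z}$ as the partition is refined. This is exactly where absolute continuity \emph{in $Z$} is used in an essential way (the statement fails without it, consistent with Remark~\ref{rem:continuity}), and it requires a genuine measure-theoretic argument — Lebesgue differentiation, or equivalently $L^1$ martingale convergence along the dyadic filtration, together with the non-atomicity estimate $\sup_k q_Z(C_k)\to0$. By comparison, verifying $P_\theta\in\mathcal{P}_{0,M}$, controlling the convolution-smoothing error, and accounting for the test's internal randomisation are all routine.
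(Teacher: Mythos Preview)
Your approach is correct and genuinely different from the paper's. The paper does \emph{not} approximate $Q^{\otimes n}$ by a mixture in total variation; instead it manufactures a \emph{single} $P\in\mathcal{P}_{0,M}$ that can be coupled to $Q$ so that samples agree to within $\epsilon$ in $\ell_\infty$-norm. Concretely, it truncates $(X,Y,Z)$ to a dyadic grid, concatenates the digits of the discretised $X$ onto the tail of the binary expansion of $Z$ (so $X$ becomes a deterministic function of $Z$, forcing $X\independent Y\mid Z$), adds small uniform noise to restore absolute continuity, and then---because this embedding makes the density highly degenerate---runs a probabilistic argument over $K!$ different embeddings to find one whose support avoids a prescribed bad set. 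Since the resulting $P$ is only $\ell_\infty$-close (not TV-close) to $Q$, the paper must additionally approximate the rejection region by a finite union of boxes. Your route sidesteps both the embedding trick and the box approximation: it shows directly that the convex hull of $\{P^{\otimes n}:P\in\mathcal{P}_{0,M}\}$ is TV-dense at each $Q^{\otimes n}$, which is precisely the Kraft--Le~Cam criterion for untestability that the paper alludes to after Proposition~\ref{prop:separation}. The price is the Lebesgue-differentiation step for the middle term; the paper's construction avoids any such regularity argument.

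One point to tighten: the stated order ``let $\delta\to0$ and afterwards $\eta\to0$'' does not fit your bound as written. With $\sup_k$ in the third term, sending $\delta\to0$ first lets the collection of cells explode, and there is no uniform control of $\tv{f_k-\kappa_\eta^{\otimes2}\ast f_k}$ over an arbitrary family of densities. The argument your parenthetical hints at is the right one: given $\epsilon$, first truncate $Z$ to a compact set, then fix $\delta$ small enough to make the first two terms each below $\epsilon/3$, and \emph{only then}, with this $\delta$ held fixed (so finitely many cells), choose $\eta$ small enough for the third term. Alternatively, replace the $\sup_k$ by the average $\sum_k q_Z(C_k)\,\tv{q_{X,Y\mid Z\in C_k}-\kappa_\eta^{\otimes2}\ast q_{X,Y\mid Z\in C_k}}$, which is what the disintegration actually delivers, and invoke dominated convergence in $\eta$; then no compact truncation or finiteness is needed.
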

A proof is given in the appendix.
Note that taking $M$ to be finite ensures all the random vectors $(x_i, y_i, z_i)$ are bounded. Thus, for example, averages will converge in distribution to Gaussian limits uniformly over $\mathcal{P}_{0,M}$; however, as the result shows, this does not help in the construction of a non-trivial test for conditional independence.
An immediate corollary to Theorem~\ref{THM:NFL} is that there is no non-trivial test for conditional independence with uniformly asymptotic level.
\begin{corollary} \label{cor:nfl}
For all $M \in (0,\infty]$ and for any sequence $(\psi_n)_{n=1}^\infty$ of tests we have
\[
\sup_{Q \in \mathcal{Q}_{0,M}} 
\limsup_{n \to \infty} \pr_Q(\psi_n = 1) \leq  
\limsup_{n \to \infty} \sup_{P \in \mathcal{P}_{0,M}} \pr_P(\psi_n = 1).
\]
\end{corollary}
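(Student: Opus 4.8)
The plan is to obtain Corollary~\ref{cor:nfl} directly from Theorem~\ref{THM:NFL} by an $\epsilon$-argument that upgrades the finite-sample level hypothesis required by the theorem into the asymptotic statement. I would abbreviate the right-hand side by $\beta := \limsup_{n \to \infty} \sup_{P \in \mathcal{P}_{0,M}} \pr_P(\psi_n = 1)$. If $\beta \geq 1$ there is nothing to prove, since every probability is at most $1$; so I would assume $\beta < 1$ from now on. Note no validity assumption on the sequence $(\psi_n)_{n=1}^\infty$ is needed: whatever its realised size, it is captured by $\beta$.

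Next, fix $\epsilon \in (0, 1-\beta)$ and set $\alpha := \beta + \epsilon$, which lies in $(0,1)$ because $\beta \geq 0$ and $\epsilon < 1-\beta$. By definition of $\limsup$ there is an integer $N = N(\epsilon)$ with $\sup_{P \in \mathcal{P}_{0,M}} \pr_P(\psi_n = 1) \leq \alpha$ for all $n \geq N$; that is, for each such $n$ the (fixed) test $\psi_n$ has valid level $\alpha$ for the null hypothesis $\mathcal{P}_{0,M}$ in the sense of Section~\ref{sec:testing}. I would then apply Theorem~\ref{THM:NFL} to $\psi_n$ for each $n \geq N$, which gives $\pr_Q(\psi_n = 1) \leq \alpha$ for every $Q \in \mathcal{Q}_{0,M}$. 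Hence $\limsup_{n \to \infty} \pr_Q(\psi_n = 1) \leq \alpha = \beta + \epsilon$ for every $Q \in \mathcal{Q}_{0,M}$, and taking the supremum over $Q \in \mathcal{Q}_{0,M}$ yields $\sup_{Q \in \mathcal{Q}_{0,M}} \limsup_{n \to \infty} \pr_Q(\psi_n = 1) \leq \beta + \epsilon$.

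Finally, since $\epsilon \in (0, 1-\beta)$ was arbitrary, letting $\epsilon \downarrow 0$ gives $\sup_{Q \in \mathcal{Q}_{0,M}} \limsup_{n \to \infty} \pr_Q(\psi_n = 1) \leq \beta$, which is exactly the claimed inequality. I do not anticipate any genuine obstacle: all the substance is in Theorem~\ref{THM:NFL}, and the only points needing care are the trivial reduction to the case $\beta < 1$ and checking that the chosen level $\alpha = \beta + \epsilon$ stays in $(0,1)$ so that the theorem is applicable. (If one prefers to avoid invoking the theorem with a level depending on $\epsilon$, one can equivalently take $\alpha$ to be any fixed element of $(\beta, 1)$ and note $N$ still exists; the $\epsilon$-formulation is simply to recover the sharp constant $\beta$.)
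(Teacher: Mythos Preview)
Your proof is correct and is precisely the standard $\epsilon$-argument one would use to pass from the finite-sample statement of Theorem~\ref{THM:NFL} to the asymptotic one; the paper itself does not spell out a proof, merely calling the result an ``immediate corollary'', so your argument is exactly what is implicitly intended.
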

This result is in
stark contrast to unconditional independence testing, where a permutation test can always be used to control the size of any testing procedure.
As a consequence, there exist tests with valid level at sample size $n$ and non-trivial power. 
For example, \citet{Hoeffding1948} introduces a rank-based test in the case of univariate random variables and proves that it maintains uniformly asymptotic level and has asymptotic power against each fixed alternative. For the multivariate case,
\citet{Berrett2017} consider a test based on mutual information and prove level guarantees, as well as uniform power results against a wide class of alternatives.
Thus while independence testing remains a hard problem in that it is only possible to have uniform power against certain subsets of alternatives, this is different to conditional independence testing where we can only hope to control the \emph{size} uniformly over certain subsets of the \emph{null} hypothesis.

\begin{remark} \label{rem:continuity}
Inspection of the proof shows that Theorem~\ref{THM:NFL} also holds in the case where the variables $X$ and $Y$ have marginal distributions that are absolutely continuous with respect to  counting measure, for example. Theorem~\ref{THM:NFL} therefore contains an impossibility result for testing the equality of two conditional distributions (by taking $Y$ to be an indicator specifying the distribution).
The continuity of $Z$, however, is necessary. If $Z$ only takes values in $\{1,2\}$, for example, one can reduce the 
problem of conditional independence testing to unconditional independence testing by combining the tests for $X \independent Y \given Z = 1$ and 
$X \independent Y \given Z = 2$.
\end{remark}

The null hypothesis being dense with respect to TV distance among the alternative hypothesis is a sufficient condition for the problem to be untestable \citep{romano2004non}. 
Proposition~\ref{prop:separation}, proved in the supplementary material, illustrates
that this
is not the case here:
at least for $M \in (0,\infty)$, there exists an alternative, for which there is no distribution from the null that is arbitrarily close.
\begin{proposition} \label{prop:separation}
For $P, Q \in \mathcal{E}_0$, the total variation distance is given by
\[
\tv{P-Q} := \sup_{ A \in \mathcal{B}} |\pr_P((X, Y, Z) \in A) - \pr_Q((X, Y, Z) \in A)|,
\]
where $\mathcal{B}$ is the Borel $\sigma$-algebra on $\R^{d_X + d_Y + d_Z}$.
For each $M \in (0, \infty)$, there exists $Q \in \mathcal{Q}_{0,M}$ satisfying
\[
\inf_{P \in \mathcal{P}_{0,M}} \tv{P-Q} \geq 1/24.
\]
\end{proposition}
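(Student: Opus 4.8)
The plan is to produce one explicit alternative $Q\in\mathcal{Q}_{0,M}$ and to show that every conditionally independent $P$ is bounded away from it in total variation by a universal constant. The idea behind the choice of $Q$ is that, under $Q$, the conditional law of $(X,Y)$ given $Z=z$ will be (a marginal of) a measure supported on two ``diagonal'' quadrants of a square, whereas under any $P\in\mathcal{P}_{0,M}$ this conditional law must be a product measure, and product measures stay a fixed distance from such ``diagonal-block'' measures. Concretely, set $b(t)=\ind_{\{t\ge 0\}}$ and let $d=d_X+d_Y+d_Z$; let $Q$ be the absolutely continuous law on the cube $[-M/2,M/2]^{d}$ under which the coordinates other than $X_1,Y_1,Z_1$ are i.i.d.\ uniform on $[-M/2,M/2]$ and independent of $(X_1,Y_1,Z_1)$, and $(X_1,Y_1,Z_1)$ has density proportional to $\ind\{b(x_1)+b(y_1)+b(z_1)\text{ is even}\}$. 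Then $Q$ is absolutely continuous with support strictly inside the $\ell_\infty$ ball of radius $M$, and conditionally on $Z=z$ the pair $(X_1,Y_1)$ is uniform on $[-M/2,0]^2\cup[0,M/2]^2$ if $b(z_1)=0$ and on $[-M/2,0]\times[0,M/2]\cup[0,M/2]\times[-M/2,0]$ if $b(z_1)=1$; call these laws $\nu^*_0,\nu^*_1$. Since neither $\nu^*_i$ is a product measure, $X\notindependent Y\mid Z$ under $Q$, so $Q\in\mathcal{Q}_{0,M}$.

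The geometric core is the claim that some universal $c_0>0$ satisfies $\tv{\mu\otimes\nu-\nu^*_i}\ge c_0$ for every product probability measure $\mu\otimes\nu$ on $\R^2$ and $i\in\{0,1\}$. To see this for $i=0$, coarsen to the partition of $[-M/2,M/2]^2$ into its four quadrants together with the complement (which can only help); with $a=\mu([-M/2,0])$, $b=\nu([-M/2,0])$ this yields
\[
2\tv{\mu\otimes\nu-\nu^*_0}\;\ge\; F(a,b):=\bigl|ab-\tfrac12\bigr|+\bigl|(1-a)(1-b)-\tfrac12\bigr|+a(1-b)+(1-a)b .
\]
A short argument gives $F(a,b)\ge 1/4$ for all $a,b\in[0,1]$: writing $u:=a(1-b)+(1-a)b$ for the off-diagonal mass, if $u>1/3$ then $F\ge u>1/3$; otherwise, since $ab\cdot(1-a)(1-b)=[a(1-a)][b(1-b)]\le 1/16$, one of $ab,(1-a)(1-b)$ is at most $1/4$, forcing $F\ge 1/2-1/4=1/4$. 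Hence $c_0=1/8$ works, and $i=1$ is identical after reflecting one coordinate.

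To finish, I would globalise. For $P\in\mathcal{P}_{0,M}$ with density $p$, write $p(x,y,z)=p_Z(z)\,p(x,y\mid z)$ and $q(x,y,z)=q_Z(z)\,q(x,y\mid z)$. Conditional independence forces $p(\cdot,\cdot\mid z)$, hence its $(X_1,Y_1)$-marginal, to be a product measure for a.e.\ $z$, and under $Q$ this marginal equals $\nu^*_{b(z_1)}$. Integrating out every coordinate except $(x_1,y_1,z)$ and then applying the triangle inequality inside the $z$-integral,
\[
2\tv{P-Q}\;\ge\;\int\Bigl(2c_0\,q_Z(z)-\bigl|p_Z(z)-q_Z(z)\bigr|\Bigr)\,dz\;=\;2c_0-2\tv{P_Z-Q_Z}\;\ge\;2c_0-2\tv{P-Q},
\]
so $\tv{P-Q}\ge c_0/2=1/16$ for every $P\in\mathcal{P}_{0,M}$, which in particular gives $\inf_{P\in\mathcal{P}_{0,M}}\tv{P-Q}\ge 1/24$.

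The step I expect to be the main obstacle is the geometric lemma. The naive quadrant-coarsening bound alone is not enough, because a product measure can sit entirely in one corner quadrant and then have zero off-diagonal mass; the point of the case split is that such concentration instead creates a large discrepancy on the diagonal quadrants. Everything else---verifying $Q\in\mathcal{Q}_{0,M}$, the disintegration, the reduction of the general-dimensional case to $(X_1,Y_1,Z_1)$ via the fact that a marginal of a product measure over the $X$- and $Y$-blocks is itself a product, and the bookkeeping for $z$ or $(x_1,y_1)$ lying outside the cube---is routine.
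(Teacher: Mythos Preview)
Your argument is correct and in fact yields the sharper bound $1/16$. Two small points: first, in the geometric lemma the coarsening should be into the four sign-quadrants of $\R^2$ rather than the four sub-squares of $[-M/2,M/2]^2$ (so take $a=\mu((-\infty,0))$, $b=\nu((-\infty,0))$); with this reading your displayed lower bound $F(a,b)$ is exact and nothing about mass outside the cube needs separate bookkeeping. Second, your case split is harmless but unnecessary: since $ab\cdot(1-a)(1-b)\le 1/16$ always, one of $ab,(1-a)(1-b)$ is always $\le 1/4$, giving $F\ge 1/4$ directly.

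Your route is genuinely different from the paper's. The paper picks $Q$ with $Z\independent(X,Y)$ and $(X,Y)$ uniform on two diagonal unit squares, so that $\E_Q XY=1/4$; it then works with the bounded functional $\psi(x,y,z)=\{x-f_P(z)\}\{y-g_P(z)\}$, which has $\E_P\psi=0$ for every conditionally independent $P$, and repeatedly uses the elementary bound $|\E_P W-\E_Q W|\le 2\|W\|_\infty\tv{P-Q}$ to transport expectations between $P$ and $Q$. This is a moment-based argument, thematically tied to the expected conditional covariance that underlies the GCM test, and it lands at $1/24$ after three such transport steps. Your approach instead disintegrates over $z$, compares conditional laws directly, and reduces everything to the purely two-dimensional fact that no product measure can sit within TV distance $1/8$ of a ``diagonal-block'' uniform. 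This is more geometric, avoids the boundedness bookkeeping of the functional approach, and gives a better constant; the paper's approach has the advantage of making the link to the conditional covariance functional explicit.
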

In Proposition~\ref{prop:KL} in the %
appendix, we also show that the null and alternative hypotheses are well-separated in the sense of KL divergence.
On the other hand, it is known that if a problem is untestable, the convex closure of the null must contain the alternative \citep[][Theorem~5 and Corollary~1, respectively]{Kraft55, Bertanha2018}.
The problem of conditional independence testing therefore has the interesting property of the null being separated from the alternative, but its convex hull is TV-dense in the alternative.

A practical implication of the negative result of Theorem~\ref{THM:NFL} is that domain knowledge is needed to select a conditional independence test appropriate for the data at hand.
However guessing the form of the entire joint distribution in order to apply a test with the appropriate type I error control seems challenging. In Section~\ref{SEC:GCM} we introduce a form of test that instead relies on selecting regression methods that have sufficiently low prediction error when regressing $\mb Y^{(n)}$ and $\mb X^{(n)}$ on $\mb Z^{(n)}$, thereby converting the problem of finding an appropriate test to the more familiar task of prediction. Before discussing this methodology, we first sketch some of the main ideas of the proof of Theorem~\ref{THM:NFL} below.

\subsection{Proof ideas of Theorem~\ref{THM:NFL}}
Consider the case where $d_X=d_Y=d_Z=1$ and where the test is required to be non-randomised.
First suppose that for $Q \in \mathcal{Q}_{0, M}$, we have a test with rejection region $R := \psi_n^{-1}(1) \subseteq \R^{3\cdot n}$ such that $\pr_Q((\mb X^{(n)}, \mb Y^{(n)}, \mb Z^{(n)}) \in R) > \alpha$. Let us suppose for now that $R$ has the particularly simple form of a finite union of boxes.
Our argument now proceeds by showing that 
one can construct a distribution $P \in \mathcal{P}_{0, M}$ from the null such that there is a coupling of $P^n$ and $Q^n$ where samples from each distribution are $\epsilon$ close in $\ell_\infty$-norm. For a sufficiently small $\epsilon$, we will have $\pr_P((\mb X^{(n)}, \mb Y^{(n)}, \mb Z^{(n)}) \in R) > \alpha$ as well, giving the result.

Figure~\ref{fig:proofintuition2} sketches the main components in our construction of $P$,
which is laid out formally in Lemmas~\ref{lem:cond_ind_approx} and \ref{lem:hiding2} in the appendix.
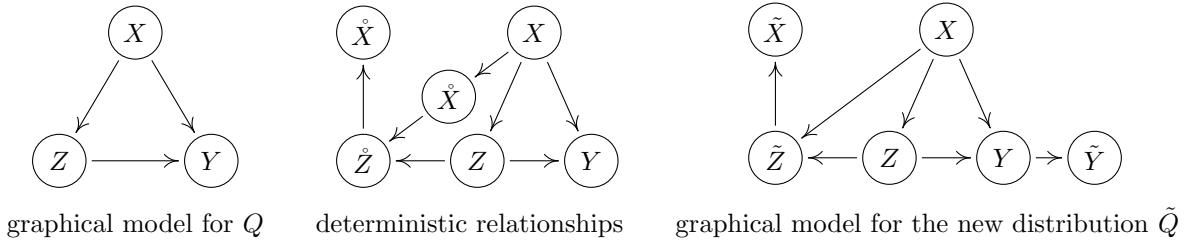
\begin{figure}[ht] 
\centerline{
\begin{tikzpicture}[xscale=2, yscale=1.7, shorten >=1pt, shorten <=1pt]
\small
  \node[observedsmall] at (1.5,2) (x)  {$X$};
  \node[observedsmall] at (2,1) (y)  {$Y$};
  \node[observedsmall] at (1,1) (z)  {$Z$};
  \draw[-arcsq] (x) -- (y);
  \draw[-arcsq] (z) -- (y);
  \draw[-arcsq] (x) -- (z);
  \draw[align=center] (1.5,0.5) node(x2p) 
  {graphical model for $Q$}; %
\end{tikzpicture}
\hfill
\begin{tikzpicture}[xscale=2, yscale=1.7, shorten >=1pt, shorten <=1pt]
\small
  \node[observedsmall] at (2.125,2) (x)  {$X$};
  \node[observedsmall] at (1,2) (tx)  {$\checknew{X}$};
  \node[observedsmall] at (1.5625,1.5) (tx2)  {$\checknew{X}$};
  \node[observedsmall] at (2.5,1) (y)  {$Y$};
  \node[observedsmall] at (1.75,1) (z)  {$Z$};
  \node[observedsmall] at (1,1) (tz)  {$\checknew{Z}$};
  \draw[-arcsq] (z) -- (y);
  \draw[-arcsq] (x) -- (y);
  \draw[-arcsq] (x) -- (z);
  \draw[-arcsq] (x) -- (tx2);
  \draw[-arcsq] (tx2) -- (tz);
  \draw[-arcsq] (tz) -- (tx);
  \draw[-arcsq] (z) -- (tz);
  \draw[align=center] (1.7,0.5) node(x2p) {deterministic relationships};
\end{tikzpicture}
\hfill
\begin{tikzpicture}[xscale=2, yscale=1.7, shorten >=1pt, shorten <=1pt]
\small
  \node[observedsmall] at (2.125,2) (x)  {$X$};
  \node[observedsmall] at (1,2) (tx)  {$\tilde{X}$};
  \node[observedsmall] at (2.5,1) (y)  {$Y$};
  \node[observedsmall] at (3.1,1) (ty)  {$\tilde{Y}$};
  \node[observedsmall] at (1.75,1) (z)  {$Z$};
  \node[observedsmall] at (1,1) (tz)  {$\tilde{Z}$};
  \draw[-arcsq] (z) -- (y);
  \draw[-arcsq] (x) -- (y);
  \draw[-arcsq] (x) -- (z);
  \draw[-arcsq] (x) -- (tz);
  \draw[-arcsq] (tz) -- (tx);
  \draw[-arcsq] (z) -- (tz);
  \draw[-arcsq] (y) -- (ty);
  \draw[align=center] (2,0.5) node(x2p) 
  {graphical model for the new distribution $\tilde{Q}$}; %
\end{tikzpicture}
}
\caption{\label{fig:proofintuition2}Illustration of the main idea of the proof of Theorem~\ref{THM:NFL}. 
Left: 
We start with a distribution $Q \in \mathcal{Q}_{0, M}$
over $(X,Y,Z)$.
In general, the $Q$ is Markov only to a fully connected graphical model. 
Middle:
After discretising $X$ to $\checknew{X}$, we are able to ``hide'' variable $\checknew{X}$ in $\checknew{Z} = f(Z, \checknew{X})$ such that
variable $\checknew{Z}$
is close to 
$Z$ in $\ell_\infty$-norm %
and 
$\checknew{X}$ can be reconstructed from $\checknew{Z}$.
Thus,
$\checknew{X}$, does not contain any ``additional information'' about~$Y$ when conditioning on $\checknew{Z}$. 
Right:
We then consider noisy versions of the variables to guarantee that the new distribution $P$ (over $\tilde{X}, \tilde{Y}, \tilde{Z}$) is absolutely continuous with respect to Lebesgue measure, and has $\tilde{X} \independent \tilde{Y} \given \tilde{Z}$. (The noise in $\tilde{Z}$ is such 
that it still allows us to reconstruct $\checknew{X}$ from~$\tilde{Z}$.) 
}
\end{figure}
The key idea is as follows.
Given $(X, Y, Z) \sim P$, we consider a binary expansion of $(X, Y, Z)$, which we truncate at some point to obtain $(\checknew{X}, \checknew{Y}, \checknew{Z})$. 
We then concatenate the digits of $\checknew{X}$ and $\checknew{Z}$ placing the former at the end of the binary expansion, thereby embedding $\checknew{X}$ within $\checknew{Z}$. 
This way, $\checknew{X}$ can be reconstructed from $\checknew{Z}$, and adding noise gives a distribution that is absolutely continuous with respect to Lebesgue measure. By making the truncation point sufficiently far down the expansions, we can ensure the $\epsilon$ proximity required.

For a general rejection region, we first approximate it using a finite union of boxes $R^{\sharp}$.
The argument sketched above gives us $\pr_P((\mb X^{(n)}, \mb Y^{(n)}, \mb Z^{(n)}) \in R^\sharp) > \alpha$, but in order to conclude the final result, we must argue that we can construct $P$ such that $\pr_P((\mb X^{(n)}, \mb Y^{(n)}, \mb Z^{(n)}) \in R^\sharp \setminus R)$ is sufficiently small. To do this, we consider a large number of potential embeddings for which the supports of the resulting distributions have little overlap. Using a probabilistic argument, we can then show that at least one embedding yields a distribution $P$ such that the above is satisfied.

\section{The Generalised Covariance Measure} \label{SEC:GCM}
We have seen how conditional independence testing is not possible without restricting the null hypothesis. In this section we give a general construction for a conditional independence test based on regression procedures for regressing $\mb Y^{(n)}$ and $\mb X^{(n)}$ on $\mb Z^{(n)}$. In the case where $d_X=d_Y=1$, which we treat in the next section, the basic form of our test statistic is a normalised covariance between the residuals from these regressions. Because of this, we call our test statistic the \emph{generalised covariance measure} (GCM). In Section~\ref{sec:multivariate} we show how to extend the approach to handle cases where more generally $d_X, d_Y\geq 1$.

\subsection{Univariate $X$ and $Y$} \label{sec:univariate}
Given a distribution $P$ for $(X, Y, Z)$, we can always decompose
\begin{align*}
X = f_P(Z) + \varepsilon_P, \qquad
Y = g_P(Z) + \xi_P,
\end{align*}
where $f_P(z) = \E_P(X | Z = z)$ and $g_P(z) = \E_P(Y | Z=z)$. Similarly, for $i=1,2,\ldots$ we define $\varepsilon_{P,i}$ and $\xi_{P,i}$ by $x_i - f_P(z_i)$ and $y_i - g_P(z_i)$ respectively. Also let $u_P(z) = \E_P(\varepsilon_P^2 |Z=z)$ and $v_P(z)= \E_P(\xi_P^2|Z=z)$.

Let $\hat{f}^{(n)}$ and $\hat{g}^{(n)}$ be estimates of the conditional expectations $f_P$ and $g_P$ formed, for example, by regressing $\mb X^{(n)}$ and $\mb Y^{(n)}$ on $\mb Z^{(n)}$. For $i=1,\ldots, n$, we compute the product between residuals from the regressions:
\begin{equation} \label{eq:resid}
R_i = \{x_i - \hat{f}(z_i)\} \{y_i - \hat{g}(z_i)\}.
\end{equation}
Here, and in what follows, we have sometimes suppressed dependence on $n$ and $P$ for simplicity of presentation.
We then define $T^{(n)}$ 
to be a normalised sum of the $R_i$'s:
\begin{equation} \label{eq:T_def}
T^{(n)}  = \frac{
 \sqrt{n}
\cdot
 \frac{1}{n}\sum_{i=1}^n R_i}{\Big(\frac{1}{n}\sum_{i=1}^n R_i^2 - \big(\frac{1}{n} \sum_{r=1}^n R_r \big)^2 \Big)^{1/2}}=:\frac{\tau_N^{(n)}}{\tau_D^{(n)}}.
\end{equation}
Our final test can be based on $|T^{(n)}|$ with large values suggesting rejection. Note that the introduction of notation for the numerator and denominator in the definition of $T^{(n)}$ are for later use in Theorem~\ref{thm:power}.

In the case where $\hat{f}$ and $\hat{g}$ are formed through linear regressions, the test is similar to one based on partial correlation, and would be identical were the denominator in \eqref{eq:T_def} to be replaced by the the product of the empirical standard deviations of the vectors $(x_i-\hat{f}(z_i))_{i=1}^n$ and $(y_i-\hat{g}(z_i))_{i=1}^n$. This approach however would fail for Example~\ref{ex:hetero} despite $f$ and $g$ being linear (in fact both equal to the zero function) as the product of the variances of the residuals would not in general equal the variance of their product. 
Indeed, the reader may convince herself using \texttt{pcor.test} from the R package \texttt{ppcor} \citep{ppcor}, for example, that common tests for vanishing partial correlation do not yield the correct size in this case.

The following result gives conditions under which when the null hypothesis of conditional independence holds, we can expect the asymptotic distribution of $T^{(n)}$ to be a standard normal.
\begin{theorem} \label{thm:univariate}
Define the following quantities:
\begin{gather*}
A_f \;:=\; \frac{1}{n}\sum_{i=1}^n \{f_P(z_i) - \hat{f}(z_i)\}^2, \qquad
B_f \;:=\; \frac{1}{n}\sum_{i=1}^n \{f_P(z_i) - \hat{f}(z_i)\}^2 v_P(z_i), \\
A_g \;:=\; \frac{1}{n}\sum_{i=1}^n \{g_P(z_i) - \hat{g}(z_i)\}^2, \qquad
B_g \;:=\; \frac{1}{n}\sum_{i=1}^n \{g_P(z_i) - \hat{g}(z_i)\}^2 u_P(z_i).
\end{gather*}

We have the following results:
\begin{enumerate}[(i)]
\item If for $P \in \mathcal{P}_0$, $A_f A_g = o_P(n^{-1})$, $B_f=o_P(1)$, $B_g=o_P(1)$ and also $0<\E_P(\varepsilon_P^2 \xi_P^2) < \infty$, then
\[
 \sup_{t \in \R} |\pr_P(T^{(n)}\leq t) -\Phi(t)|\to 0.
\]
\item Let $\mathcal{P} \subset \mathcal{P}_0$ be a class of distributions such that $A_f A_g = o_\mathcal{P}(n^{-1})$, $B_f=o_\mathcal{P}(1)$, $B_g=o_\mathcal{P}(1)$.
If in addition $\inf_{P \in \mathcal{P}}\E(\varepsilon_P^2\xi_P^2) \geq c_1$ and $\sup_{P \in \mathcal{P}} \E_P\{|\varepsilon_P\xi_P|^{2+\eta} \} \leq c_2$ for some $c_1, c_2 > 0$ and $\eta > 0$, then
\[
 \sup_{P \in \mathcal{P}} \sup_{t \in \R} |\pr_P(T^{(n)}\leq t) -\Phi(t)|\to 0.
\]
\end{enumerate}
\end{theorem}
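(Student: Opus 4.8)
The plan is to split the residual product into a dominant term plus remainders, prove a central limit theorem for the dominant term, show that the remainders and the denominator fluctuations are asymptotically negligible, and conclude by Slutsky. Write $\Delta f_i := f_P(z_i) - \hat f(z_i)$ and $\Delta g_i := g_P(z_i) - \hat g(z_i)$, and note that $\hat g$ is a measurable function of $(\mb Y^{(n)}, \mb Z^{(n)})$ and $\hat f$ of $(\mb X^{(n)}, \mb Z^{(n)})$. Expanding,
\begin{align*}
R_i &= (\varepsilon_{P,i}+\Delta f_i)(\xi_{P,i}+\Delta g_i) =: a_i + b_i + c_i + d_i,\quad\text{where}\\
a_i &:= \varepsilon_{P,i}\xi_{P,i},\quad b_i := \varepsilon_{P,i}\Delta g_i,\quad c_i := \xi_{P,i}\Delta f_i,\quad d_i := \Delta f_i\Delta g_i.
\end{align*}
Since $P \in \mathcal{P}_0$ and $\varepsilon_P$ is a function of $(X,Z)$ while $\xi_P$ is a function of $(Y,Z)$, conditional independence gives $\varepsilon_P \independent \xi_P \mid Z$; hence the $a_i$ are i.i.d.\ with $\E_P a_i = \E_P\{\E_P(\varepsilon_P\mid Z)\E_P(\xi_P\mid Z)\} = 0$ and variance $\sigma_P^2 := \E_P(\varepsilon_P^2\xi_P^2) \in (0,\infty)$. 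Thus $\sqrt n \cdot \tfrac1n\sum_i a_i \indist \mathcal{N}(0,\sigma_P^2)$ by the classical CLT for part~(i); for part~(ii) one uses a Berry--Esseen bound together with $\inf_P \sigma_P^2 \ge c_1$ and $\sup_P \E_P|\varepsilon_P\xi_P|^{2+\eta} \le c_2$ to make this convergence uniform over $\mathcal{P}$ after normalising by $\sigma_P$.

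For the numerator $\tau_N^{(n)} = \sqrt n \cdot \tfrac1n\sum_i R_i$, the three remainders are negligible. By Cauchy--Schwarz, $|\sqrt n \cdot \tfrac1n\sum_i d_i| \le \sqrt{n A_f A_g} = o_\mathcal{P}(1)$ directly from $A_f A_g = o_\mathcal{P}(n^{-1})$. Conditioning on $(\mb Y^{(n)}, \mb Z^{(n)})$, on which $\Delta g_i$ is fixed, we have under the null $\E_P(\varepsilon_{P,i}\mid y_i, z_i) = \E_P(X\mid z_i) - f_P(z_i) = 0$ and $\E_P(\varepsilon_{P,i}^2\mid y_i, z_i) = u_P(z_i)$, so the conditional mean of $\sqrt n \cdot \tfrac1n\sum_i b_i$ is $0$ and its conditional variance is exactly $B_g = o_\mathcal{P}(1)$; a conditional Chebyshev inequality followed by bounded convergence gives $\sqrt n\cdot\tfrac1n\sum_i b_i = o_\mathcal{P}(1)$, and symmetrically $\sqrt n \cdot\tfrac1n\sum_i c_i = o_\mathcal{P}(1)$ via conditioning on $(\mb X^{(n)}, \mb Z^{(n)})$ and $B_f$. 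Hence $\tau_N^{(n)} = \tfrac1{\sqrt n}\sum_i a_i + o_\mathcal{P}(1) \indist \mathcal{N}(0,\sigma_P^2)$.

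For the denominator, $(\tfrac1n\sum_i R_i)^2 = o_\mathcal{P}(1)$, so it suffices that $\tfrac1n\sum_i R_i^2 = \sigma_P^2 + o_\mathcal{P}(1)$. Expanding $R_i^2 = (a_i+b_i+c_i+d_i)^2$, the leading term $\tfrac1n\sum_i a_i^2 = \tfrac1n\sum_i \varepsilon_{P,i}^2\xi_{P,i}^2 \to \sigma_P^2$ by the weak law of large numbers, uniformly over $\mathcal{P}$ in part~(ii) via a truncation argument using the $(2+\eta)$th-moment bound. Every other term is, by Cauchy--Schwarz, at most a product of $(\tfrac1n\sum_i a_i^2)^{1/2} = O_\mathcal{P}(1)$ with factors among $(\tfrac1n\sum_i b_i^2)^{1/2}$, $(\tfrac1n\sum_i c_i^2)^{1/2}$, $(\tfrac1n\sum_i d_i^2)^{1/2}$; here $\tfrac1n\sum_i b_i^2$ has conditional mean $B_g = o_\mathcal{P}(1)$ given $(\mb Y^{(n)}, \mb Z^{(n)})$ (and $\tfrac1n\sum_i c_i^2$ has conditional mean $B_f$), while $\tfrac1n\sum_i d_i^2 = \tfrac1n\sum_i \Delta f_i^2 \Delta g_i^2 \le \big(\max_i \Delta g_i^2\big)\tfrac1n\sum_i \Delta f_i^2 \le n A_g \cdot A_f = n A_f A_g = o_\mathcal{P}(1)$. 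Therefore $\tau_D^{(n)} = \sigma_P + o_\mathcal{P}(1)$, with $\sigma_P$ bounded away from $0$ (and bounded above by $c_2^{1/(2+\eta)}$ in part~(ii), by Jensen). Slutsky's theorem then gives $T^{(n)} = \tau_N^{(n)}/\tau_D^{(n)} \indist \mathcal{N}(0,1)$, and since $\Phi$ is continuous the convergence of distribution functions holds uniformly in $t$; the uniformity over $\mathcal{P}$ in part~(ii) is carried through each of the steps above.

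The main obstacle is reconciling the use of the $\mb Y^{(n)}$-data to form $\hat g$ (and the $\mb X^{(n)}$-data to form $\hat f$) with the need to treat $\varepsilon_{P,i}$ as centred noise with the correct conditional variance: this is precisely where the null hypothesis is essential, through the identity $\E_P(\varepsilon_{P,i}^2 \mid y_i, z_i) = u_P(z_i)$ (conditioning on $y_i$, not merely on $z_i$), which ties the conditional variance of $\tfrac1{\sqrt n}\sum_i b_i$, and the $\tfrac1n\sum_i b_i^2$ term, to $B_g$. A secondary, more routine difficulty specific to part~(ii) is making each limit uniform over $\mathcal{P}$ using only a uniform $(2+\eta)$th-moment bound on $\varepsilon_P\xi_P$, which requires a uniform weak law of large numbers (via truncation) for $\tfrac1n\sum_i \varepsilon_{P,i}^2\xi_{P,i}^2$ and a uniform Berry--Esseen CLT for $\tfrac1{\sqrt n}\sum_i \varepsilon_{P,i}\xi_{P,i}$.
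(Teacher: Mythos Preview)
Your proposal is correct and follows essentially the same route as the paper: the same four-term decomposition of $R_i$, the same Cauchy--Schwarz bound $\sqrt{nA_fA_g}$ on the bias cross-term, the same conditioning on $(\mb Y^{(n)},\mb Z^{(n)})$ (resp.\ $(\mb X^{(n)},\mb Z^{(n)})$) combined with a bounded-convergence/conditional-Chebyshev step to kill $\nu_g$ and $\nu_f$, and the same Slutsky-plus-WLLN treatment of the denominator. You have also correctly singled out the crucial point that the null is what lets you write $\E_P(\varepsilon_{P,i}^2\mid y_i,z_i)=u_P(z_i)$, and your handling of part~(ii) via a uniform Berry--Esseen/Lyapunov CLT and a truncation-based uniform WLLN mirrors the auxiliary lemmas the paper proves for this purpose.
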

\begin{remark} \label{rem:univariate}
Applying the Cauchy--Schwarz inequality and Markov's inequality, we see the requirement that $A_f A_g = o_P(n^{-1})$ is fulfilled if 
\begin{equation} \label{eq:exp_pred_err}
n\;\; \E_P \bigg(\frac{1}{n}\sum_{i=1}^n \{f_P(z_i) - \hat{f}(z_i)\}^2\bigg) \;\; \E_P \bigg(\frac{1}{n}\sum_{i=1}^n \{g_P(z_i) - \hat{g}(z_i)\}^2\bigg) \to 0.
\end{equation}
Thus if in addition we have $\E_P B_f, \, \E_P B_g \to 0$, this is sufficient for all conditions required in (i) to hold.

If $\E_P B_f$, $\E_P B_g$ and the left-hand side of \eqref{eq:exp_pred_err} converges to 0 uniformly over all $P \in \mathcal{P}$, then the conditions in (ii) will hold provided the moment condition on $\varepsilon_P\xi_P$ is also satisfied.
\end{remark}
A proof is given in the supplementary material.
We see that under conditions largely to do with the mean squared prediction error (MSPE) of $\hat{f}$ and $\hat{g}$, $T^{(n)}$ can be shown to be asymptotically standard normal (i), and if the prediction error is uniformly small, the convergence to the Gaussian limit is correspondingly uniform (ii). A key point is that the requirement on the predictive properties of $\hat{f}$ and $\hat{g}$ is reasonably weak: for example, provided their MSPEs are $o(n^{-1/2})$, we have that the condition on $A_f A_g$ is satisfied. 
If in addition $\max_{i=1}^n |v_P(z_i)|$ and $\max_{i=1}^n |u_P(z_i)|$ are $O_P(\sqrt{n})$, then the conditions on $B_f$ and $B_g$ will be automatically satisfied. The latter conditions would hold if $ \E_P u_P^2(Z) < \infty$ and $\E_P v_P^2(Z) < \infty$, for example.

Note that the rate of convergence requirement on $A_f$ and $A_g$ is a slower rate of convergence than the rate obtained when estimating Lipschitz regression functions when $d_Z=1$, for example. Furthermore, we show in Section~\ref{sec:kernel} that $f$ and $g$ being in a reproducing kernel Hilbert space (RKHS) is enough for them to be estimable at the required rate.

In the setting where $Z$ is high-dimensional and $f$ and $g$ are sparse and linear, standard theory for the Lasso \citep{tibshirani96regression, buhlmann2011statistics} shows that it may be used to obtain estimates $\hat{f}$ and $\hat{g}$ satisfying the required properties under appropriate sparsity conditions. %
In fact, in this case our test statistic is closely related to that involved in the ANT procedure of \citet{Ren2015} and the so-called RP test introduced in \citet{Shah2018}, which amount to a regularised partial correlation. A difference is that the denominator in \eqref{eq:T_def} means the GCM test would not require $\varepsilon_P \independent \xi_P$ unlike the ANT test and the RP test.

We now briefly sketch the reason for the relatively weak requirement on the MSPEs. In the following we suppress dependence on $P$ for simplicity of presentation. We have
\begin{align}
\frac{1}{\sqrt{n}}\sum_{i=1}^n R_i &= \frac{1}{\sqrt{n}} \sum_{i=1}^n \{f(z_i) - \hat{f}(z_i) + \varepsilon_i\}\{g(z_i) -\hat{g}(z_i) + \xi_i\} \nonumber \\
&= (b + \nu_g + \nu_f) + \frac{1}{\sqrt{n}}\sum_{i=1}^n \varepsilon_i \xi_i, \label{eq:T_num}
\end{align}
where
\begin{gather*}
b := \frac{1}{\sqrt{n}}\sum_{i=1}^n \{f(z_i) - \hat{f}(z_i)\}\{g(z_i) - \hat{g}(z_i)\}, \\
\nu_g := \frac{1}{\sqrt{n}} \sum_{i=1}^n \varepsilon_i \{g(z_i) - \hat{g}(z_i)\} \qquad
\nu_f := \frac{1}{\sqrt{n}} \sum_{i=1}^n \xi_i \{f(z_i) - \hat{f}(z_i)\}.
\end{gather*}
The summands in the final term in \eqref{eq:T_num} are i.i.d.\ with zero mean provided $P \in \mathcal{P}_0$, so the central limit theorem dictates that these converge to a standard normal. We also see that the simple form of the GCM gives rise to the term $b$ involving a product of bias-type terms from estimating $f$ and $g$, so each term is only required to converge to 0 at a slow rate such that their product is of smaller order than the variance of the final term.
The summands in $\nu_g$ are, under the null, mean zero conditional on $(\mb Y^{(n)}, \mb Z^{(n)})$. This term and similarly $\nu_f$ are therefore both relatively well-behaved, and give rise to the weak conditions on $B_f$ and $B_g$.

\subsubsection{Power of the GCM} \label{sec:power}
We now present a result on the power of a version of the GCM. We may view our test statistic as a normalised version of the conditional covariance $\E_P (\varepsilon_P \xi_P) = \E_P\cov_P(X, Y | Z)$, 
where $\cov_P(X, Y | Z) = \E_P (XY | Z) - 
\E_P (X | Z)\E_P (Y | Z)$.
This is always zero under the null, see Equation~\eqref{eq:Daudin}, and does not necessarily need to be non-zero under an alternative; we can only hope to have power against alternatives where this conditional covariance is non-zero.

Control of the term $b$ in \eqref{eq:T_num} under the alternative can proceed in exactly the same way as under the null. However  control of the terms $\nu_f$ and $\nu_g$ typically requires additional conditions (for example Donsker-type conditions) on the estimators $\hat{f}$ and $\hat{g}$ as under the alternative both the errors $\varepsilon_i$ and $\hat{g}$ can depend on $\mb Y^{(n)}$. 
A notable exception is when $f$ and $g$ are sparse linear functions; in this setting alternative arguments can be used to show the GCM with Lasso regressions has optimal power when $Z$ has a sparse inverse covariance \citep{Ren2015, Shah2018}.
To state a general result avoiding additional conditions, here we will suppose that $\hat{f}$ and $\hat{g}$ have been constructed from an auxiliary training sample, independent of the data $(\mb X^{(n)}, \mb Y^{(n)}, \mb Z^{(n)})$ (e.g.\ through sample-splitting); see, for example, \citep{robins2008higher, Zheng2011}.
A drawback however, compared to the original GCM, is that the corresponding prediction error terms $A_f$ and $A_g$ are here out-of-sample prediction errors. These are typically more sensitive to the distribution of $Z$ and larger than the in-sample prediction errors featuring in Theorem~\ref{thm:univariate}. For this reason we consider the sample splitting approach to be more of a tool to facilitate theoretical analysis and would usually recommend using the original GCM in practice due to its typically better type I error control. 
\begin{theorem} \label{thm:power}
Consider the setup of Theorem~\ref{thm:univariate} but with the following differences: $\hat{f}$ and $\hat{g}$ have been constructed using auxiliary data independent of $(\mb X^{(n)}, \mb Y^{(n)}, \mb Z^{(n)})$; the null hypothesis $\mathcal{P}_0$ is replaced by $\mathcal{E}_0$ the set of all distributions absolutely continuous with respect to Lebesgue measure; and conditions involving $\varepsilon_P \xi_P$ are replaced by those involving the centred version $\varepsilon_P \xi_P - \E_P(\varepsilon_P \xi_P)$.
Define
\[
\rho_P = \E_P\cov_P(X, Y | Z) \qquad \text{and} \qquad \sigma_P = \sqrt{\var_P(\varepsilon_P \xi_P)}.
\]
Then under the 
conditions of (i) in Theorem~\ref{thm:univariate} we have
\[
\sup_{t \in \R} \abs{\pr_P\left(\frac{\tau_N^{(n)} - \sqrt{n}\rho_P}{\tau_D^{(n)}}  \leq t\right) - \Phi(t)} \to 0, \;\;\;  \tau_D^{(n)} - \sigma_P = o_P(1),
\]
with
$\tau_N^{(n)}$ and
$\tau_D^{(n)}$ defined as in~\eqref{eq:T_def}.

Under the conditions of (ii) in Theorem~\ref{thm:univariate} we have
\[
\sup_{P \in \mathcal{P}} \sup_{t \in \R} \abs{\pr_P\left(\frac{\tau_N^{(n)} - \sqrt{n}\rho_P}{\tau_D^{(n)}}  \leq t\right) - \Phi(t)} \to 0, \;\;\;  \tau_D^{(n)} - \sigma_P = o_\mathcal{P}(1).
\]
\end{theorem}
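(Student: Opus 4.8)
The plan is to re-run the decomposition behind Theorem~\ref{thm:univariate}, now re-centring the leading term at $\sqrt{n}\rho_P$ and using the sample-splitting assumption to control the cross terms without appealing to the null. Write $\Delta f_i := f_P(z_i) - \hat f(z_i)$ and $\Delta g_i := g_P(z_i) - \hat g(z_i)$ and recall~\eqref{eq:T_num}. Since $\cov_P(X,Y\given Z) = \E_P(\varepsilon_P\xi_P\given Z)$ by the definitions of $f_P$ and $g_P$, one has $\rho_P = \E_P(\varepsilon_P\xi_P)$, so that
\[
\tau_N^{(n)} - \sqrt{n}\rho_P = b + \nu_f + \nu_g + \frac{1}{\sqrt{n}}\sum_{i=1}^n\bigl(\varepsilon_i\xi_i - \rho_P\bigr),
\]
with $b,\nu_f,\nu_g$ as in~\eqref{eq:T_num}; the summands of the last term are now i.i.d.\ with mean zero and variance $\sigma_P^2$. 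The bias term $b$ is treated exactly as under the null: $|b| \le \sqrt{n}\,\sqrt{A_f}\,\sqrt{A_g} = \sqrt{nA_fA_g} = o_P(1)$ by Cauchy--Schwarz and the hypothesis $A_fA_g = o_P(n^{-1})$ (respectively $o_\mathcal{P}(n^{-1})$).

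The substantive point is the treatment of $\nu_f$ and $\nu_g$, and this is precisely where sample splitting enters. Conditionally on the auxiliary sample, $\hat f$ and $\hat g$ are fixed functions; conditioning in addition on $\mb Z^{(n)}$, the summands of $\nu_g = n^{-1/2}\sum_i \varepsilon_i\Delta g_i$ are independent with conditional mean zero, since $\E_P(\varepsilon_i\given z_i) = 0$ by the \emph{definition} of $f_P$, whether or not $X \independent Y \given Z$. Hence the conditional variance of $\nu_g$ is $n^{-1}\sum_i \Delta g_i^2\,u_P(z_i) = B_g = o_P(1)$, and a conditional Markov inequality gives $\nu_g = o_P(1)$; symmetrically $\nu_f = o_P(1)$ via $B_f$. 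Without the auxiliary sample this step would fail under the alternative, as $\hat g$ would generally be correlated with the $\varepsilon_i$ through $\mb Y^{(n)}$ — which is why the null is invoked at this point in Theorem~\ref{thm:univariate}. For the leading term, the central limit theorem gives $n^{-1/2}\sum_i(\varepsilon_i\xi_i-\rho_P)\indist \mathcal{N}(0,\sigma_P^2)$ under the conditions of~(i); for the uniform statement of~(ii), the bounds $\inf_{\mathcal{P}}\var_P(\varepsilon_P\xi_P)\ge c_1$ and $\sup_{\mathcal{P}}\E_P|\varepsilon_P\xi_P|^{2+\eta}\le c_2$ — the latter also forcing $\sigma_P$ to be bounded above, by Lyapunov's inequality — permit a Berry--Esseen-type bound for moments of order $2+\eta$ that is uniform over $\mathcal{P}$.

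For the denominator, $n^{-1}\sum_i R_i = \tau_N^{(n)}/\sqrt{n} = \rho_P + O_P(n^{-1/2})$ by the display above, so $(n^{-1}\sum_i R_i)^2 = \rho_P^2 + o_P(1)$. Writing $R_i = \varepsilon_i\xi_i + S_i$ with $S_i := \Delta f_i\xi_i + \varepsilon_i\Delta g_i + \Delta f_i\Delta g_i$, one has $n^{-1}\sum_i S_i^2 \le 3\bigl(n^{-1}\sum_i\Delta f_i^2\xi_i^2 + n^{-1}\sum_i\varepsilon_i^2\Delta g_i^2 + n^{-1}\sum_i\Delta f_i^2\Delta g_i^2\bigr)$, where the first two terms have conditional expectations $B_f$ and $B_g$ and are thus $o_P(1)$, while the third is at most $nA_fA_g = o_P(1)$; Cauchy--Schwarz then controls $n^{-1}\sum_i\varepsilon_i\xi_iS_i$ using $n^{-1}\sum_i\varepsilon_i^2\xi_i^2 = O_P(1)$ (which follows from $\E_P(\varepsilon_P^2\xi_P^2) = \sigma_P^2 + \rho_P^2 < \infty$). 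Hence $n^{-1}\sum_i R_i^2 \inprob \E_P(\varepsilon_P^2\xi_P^2)$ and $(\tau_D^{(n)})^2 \inprob \E_P(\varepsilon_P^2\xi_P^2) - \rho_P^2 = \sigma_P^2 > 0$, giving $\tau_D^{(n)} - \sigma_P = o_P(1)$. Slutsky's theorem combined with Polya's theorem then yields the first display of the claim; part~(ii) follows by replacing each $o_P$ above with $o_\mathcal{P}$ (the uniform moment condition making the relevant law of large numbers uniform) and invoking a standard $\varepsilon$--$\delta$ argument for the ratio, which uses that $\sigma_P$ is bounded away from $0$ and $\infty$ uniformly over $\mathcal{P}$. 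The conceptual heart of the argument is the observation that $\nu_f$ and $\nu_g$ can be controlled purely through sample splitting together with the definitional identity $\E_P(\varepsilon_P\given Z)=0$; the denominator analysis is the most calculation-heavy part but presents no essential difficulty.
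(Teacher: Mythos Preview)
Your proof is correct and follows essentially the same approach as the paper: the same numerator decomposition $\tau_N - \sqrt n\rho_P = b + \nu_f + \nu_g + n^{-1/2}\sum_i(\varepsilon_i\xi_i-\rho_P)$, with the key observation that conditioning on the auxiliary sample and $\mb Z^{(n)}$ (rather than on $(\mb Y^{(n)},\mb Z^{(n)})$ as under the null) makes $\nu_g$ a sum of conditionally mean-zero independent terms via the definitional identity $\E_P(\varepsilon_i\mid z_i)=0$, and the same ``$\tau_D$ argument as in Theorem~\ref{thm:univariate} with conditioning on the auxiliary data replacing conditioning on $\mb X$ or $\mb Y$''. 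Your bookkeeping for the denominator via $R_i=\varepsilon_i\xi_i+S_i$ is a minor repackaging of the paper's $|R_i^2-\varepsilon_i^2\xi_i^2|\le \mathrm{I}_i+\mathrm{II}_i+\mathrm{III}_i$ split, but the content is the same.
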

A proof is given in the supplementary material. We see that we achieve optimal $\sqrt{n}$ rates for estimating $\rho_P$.

\subsubsection{Relationship to semiparametric models} \label{sec:semiparametric}
In viewing $\tau_N^{(n)} / \sqrt{n}$ as an estimator of the functional $\rho_P$, our GCM test connects to a vast literature in semiparametric statistics. In particular, the requirement of estimating nonparametric quantities (in our case $\sqrt{A_f}$ and $\sqrt{A_g}$) at a $o(n^{-1/4})$ rate is common for estimators of functionals based on estimating equations involving influence functions \citep{bickel1993efficient}. Our requirement on prediction error necessitates that at least one of $f_P$ and $g_P$ is H\"older $\beta$-smooth with $\beta / (2\beta + d_Z) \geq 1/4$. Estimators of the expected conditional covariance functional requiring minimal possible smoothness conditions may be derived using the theory of higher order influence functions \citep{robins2008higher, robins2009semiparametric, li2011higher, robins2017minimax}; 
these estimators are however significantly more complicated.
\citet{newey2018cross} study another approach to estimation of the functional based on a particular spline-based regression method. The work of \citet{chernozhukov2017double} uses related ideas to ours here to obtain $1/\sqrt{n}$ convergent estimates and confidence intervals for parameters such as average treatment effects in causal inference settings. A distinguishing feature of our work here is that we only require in-sample prediction error bounds under the null of conditional independence, which is advantageous in our setting for the reasons mentioned in the previous section.

\subsection{Multivariate $X$ and $Y$} \label{sec:multivariate}
We now consider the more general setting where $d_X,\, d_Y \geq 1$, and will assume for technical reasons that $d_X d_Y \geq 3$. We let $T^{(n)}_{jk}$ be the univariate GCM based on data $(\mb X^{(n)}_j, \mb Y^{(n)}_k, \mb Z^{(n)})$ 
and regression methods $\hat{f}_j$ and $\hat{g}_k$. 
(As described in Section~\ref{sec:notation}, the subindex selects a column.)
More generally, we will add subscripts $j$ and $k$ to certain terms defined in the previous subsection to indicate that the quantities are based on $X_j$ and $Y_k$ rather than $X$ and $Y$. Thus, for example, $\varepsilon_{P,j}$ is the difference of $X_j$ and its conditional expectation given~$Z$.

We define our aggregated test statistic to be
\begin{equation*}
S_n = \max_{j=1,\ldots,d_X,\,k=1,\ldots,d_Y} |T^{(n)}_{jk}|.
\end{equation*}
There are other choices for how to combine the test statistics in 
$\mb T^{(n)} := (T^{(n)}_{jk})_{j,k} \in \R^{d_X \cdot d_Y}$
into a single test statistic. Under similar conditions to those in Theorem~\ref{thm:univariate}, one can show that if $d_X$ and $d_Y$ are fixed, $\mb T^{(n)}$ will converge in distribution to a multivariate Gaussian limit with a covariance that can be estimated. The continuous mapping theorem can then be used to deduce the asymptotic limit distribution of the sum of squares of $T^{(n)}_{jk}$, for example. However, one advantage of the maximum is that the bias component of $S_n$ will be bounded by the maximum of the bias terms in $T^{(n)}_{jk}$. A sum of squares-type statistic would have a larger bias component, and tests based on it may not maintain the level for moderate to large $d_X$ or $d_Y$. Furthermore, $S_n$ will tend to exhibit good power against alternatives where conditional independence is only violated for a few pairs $(X_j, Y_k)$, i.e., when the set of $(j,k)$ such that $X_j \notindependent Y_k | Z$ is small.

In order to understand what values of $S_n$ indicate rejection, we will compare $S_n$ to
\begin{equation*}
\hat{S}_n = \max_{j=1,\ldots,d_X,\,k=1,\ldots,d_Y} |\hat{T}^{(n)}_{jk}|
\end{equation*}
where $\hat{\mb T}^{(n)} \in \R^{d_X \cdot d_Y}$ is mean zero multivariate Gaussian with a covariance matrix $\hat{\mbb \Sigma} \in \R^{d_X \cdot d_Y \times d_X \cdot d_Y}$ determined from the data as follows.
Let $\mb R_{jk} \in \R^n$ be the vector of products of residuals \eqref{eq:resid} involved in constructing the test statistic $T^{(n)}_{jk}$. We set $\hat{\Sigma}_{jk,lm}$ to be the sample correlation between $\mb R_{jk} \in \R^n$ and $\mb R_{lm}$:
\[
\hat{\Sigma}_{jk,lm} = \frac{\mb R_{jk}^T \mb R_{lm}/n - \bar{\mb R}_{jk} \bar{\mb R}_{lm}}{\big(\|\mb R_{jk}\|_2^2/n - \bar{\mb R}_{jk}^2\big)^{1/2} \big(\|\mb R_{lm}\|_2^2/n - \bar{\mb R}_{lm}^2\big)^{1/2}}.
\]
Here $\bar{\mb R}_{jk}$ is the sample mean of the components of $\mb R_{jk}$.

Let $\hat{G}$ be the quantile function of $\hat{S}_n$. This is a random function that depends on the data $(\mb X^{(n)}, \mb Y^{(n)}, \mb Z^{(n)})$ through $\hat{\mbb\Sigma}$. Note that given the $\mb R_{jk}$, we can approximate $\hat{G}$ to any degree of accuracy via Monte Carlo.

The ground-breaking work of \citet{chernozhukov2013gaussian} gives conditions under which $\hat{G}$ can well-approximate the quantile function of a version of $S_n$ where all bias terms, that is terms corresponding to $b$, $\nu_g$ and $\nu_f$ are all equal to 0. We will require that those conditions are met by $\varepsilon_{P,j} \xi_{P,k}$ for all $j=1,\ldots,d_X$, $k=1,\ldots,d_Y$. Below, we lay out these conditions, which take two possible forms. Let $d=\max(d_X, d_Y)$; note that $d$ and $\mathcal{P}$ are permitted to change with $n$, though we suppress this in the notation.
\begin{itemize}
\item[(A1a)] $\max_{r=1,2} \E_P(|\varepsilon_{P,j} \xi_{P,k}|^{2+r}/C_n^r) + \E_P(\exp(|\varepsilon_{P,j} \xi_{P,k}|/C_n)) \leq 4$;
\item[(A1b)] $\max_{r=1,2} \E_P(|\varepsilon_{P,j} \xi_{P,k}|^{2+r}/C_n^{r/2}) + \E_P(\max_{j,k}|\varepsilon_{P,j} \xi_{P,k}|^4/C_n^2) \leq 4$;
\item[(A2)] $C_n^2 (\log(d n))^7/n \leq Cn^{-c}$ and $\E_P(\varepsilon_{P,j}^2 \xi_{P,k}^2)\geq c_1$.
\end{itemize}
The result below shows that under the moment conditions above, provided the prediction error following the regressions goes to zero sufficiently fast, $\hat{G}$ closely approximates the quantile function of $S_n$ and therefore may be used to correctly calibrate our test. 
\begin{theorem} \label{thm:multivariate}
Suppose that for $\mathcal{P} \subset \mathcal{P}_0$, the following is true: there are constants $C, c, c_1 >0$ such that for each $n$ and $P \in \mathcal{P}$, there exists $C_n \geq 1$ such that one of (A1a) and (A1b) hold, and that (A2) holds. Suppose that 
\begin{gather}
\max_{j,k} A_{f,j}A_{g,k} = o_\mathcal{P}(n^{-1} \log(d)^{-2}), \label{eq:A_cond}
\end{gather}
Suppose further that there exist sequences 
$(\tau_{f,n})_{n \in \mathbb{N}}$, 
$(\tau_{g,n})_{n \in \mathbb{N}}$ 
such that
\begin{gather}
\max_{i,j} |\varepsilon_{P,ij}| = O_\mathcal{P}(\tau_{g,n}), \qquad \max_k A_{g,k} = 
o_\mathcal{P}(\tau_{g,n}^{-2}\log(d)^{-4})
\label{eq:tau_g_cond} \\
\max_{i,k} |\xi_{P,ik}| = O_\mathcal{P}(\tau_{f,n}), \qquad \max_j A_{f,j} = 
o_\mathcal{P}(\tau_{f,n}^{-2}\log(d)^{-4}) .\label{eq:tau_f_cond}
\end{gather}
Then
\[
\sup_{P \in \mathcal{P}} \sup_{\alpha \in (0, 1)} |\pr_P\{S_n \leq \hat{G}(\alpha)\} - \alpha| \to 0
\]
\end{theorem}
A proof is given in the supplementary material.
\begin{remark}
If the errors $\{\varepsilon_{P,j}\}_{j=1}^{d_X}$ and $\{\xi_{P,k}\}_{k=1}^{d_Y}$ are all sub-Gaussian with parameters bounded above by some constant $M$ uniformly across $P \in \mathcal{P}$, we may easily see that both (A1a) and (A1b) are satisfied with $C_n$ a constant; see \citet{chernozhukov2013gaussian} for further discussion.

If additionally we have $A_{f,j}, A_{g,k} =  
o_{\mathcal{P}}\left(\log(d)^{-1}\min\{n^{-1/2}, \log(d)^{-4}\}\right)$, \eqref{eq:A_cond}, \eqref{eq:tau_g_cond} and \eqref{eq:tau_f_cond} will all be satisfied.
\end{remark}

Theorem~\ref{thm:multivariate} allows for $d_X$ and $d_Y$ to be large compared to $n$.
However the result can be of use even when faced with univariate data. In this case, or more generally when $d_X$ and $d_Y$ are small, one can consider mappings $f_X : \R^{d_X + d_Z} \to \R^{\tilde{d}_X}$ and $f_Y:\R^{d_Y + d_Z} \to \R^{\tilde{d}_Y}$ where $\tilde{d}_X$ and $\tilde{d}_Y$ are potentially large. Provided these mappings are not determined from the data, 
we will have for $\tilde{X} := f_X(X, Z)$ and $\tilde{Y} := f_Y(Y, Z)$ that $\tilde{X} \independent \tilde{Y} \given Z$ if $X \independent Y \given Z$ (see equation~\eqref{eq:Daudin}). 
Thus we may apply the methodology above to the mapped data, potentially allowing the test to have power against a more diverse set of alternatives. In view of Theorem~\ref{thm:power}, successful mappings should have the equivalent of $\rho_P$ large, but also $\E(\tilde{X} | Z=\cdot)$ and $\E(\tilde{Y}| Z=\cdot)$ should not be so complex that it is impossible to estimate them well. We leave further investigation of this topic to further work.

\section{GCM Based on Kernel Ridge Regression} \label{sec:kernel}
We now apply the results of the previous section to a GCM based on estimating the conditional expectations via kernel ridge regression.
For simplicity, we consider only the univariate case where $d_X=d_Y=1$. In the following, we make use of the notation introduced in Section~\ref{sec:univariate}.

Given $\mathcal{P} \subset \mathcal{P}_0$, suppose that the conditional expectations $f_P, g_P$ satisfy $f_P, g_P \in \mathcal{H}$ for some RKHS $(\mathcal{H}, \|\cdot \|_{\mathcal{H}})$ with reproducing 
kernel $k: \R^{d_{Z}} \times \R^{d_{Z}} \rightarrow \mathbb{R}$. 
Let $K \in \R^{n \times n}$ have $ij$th entry $K_{ij} = k(z_i, z_j)/n$ and denote the eigenvalues of $K$ by $\hat{\mu}_1 \geq \hat{\mu}_2 \geq \cdots \geq \hat{\mu}_n \geq 0$.
We will assume that under each $P \in \mathcal{P}$, $k$ admits an eigen-expansion of the form
\begin{equation} \label{eq:Mercer}
k(z, z') = \sum_{j=1}^\infty \mu_{P,j} e_{P,j}(z) e_{P,j}(z')
\end{equation}
with orthonormal eigenfunctions $\{e_{P,j}\}_{j=1}^\infty$, so $\E_P e_{P,j}e_{P,k}=\ind_{\{k=j\}}$, and summable eigenvalues $\mu_{P,1} \geq \mu_{P,2} \geq \cdots \geq 0$. Such an expansion is guaranteed under mild conditions by Mercer's theorem.

Consider forming estimates $\hat{f}=\hat{f}^{(n)}$ and $\hat{g}=\hat{g}^{(n)}$ through kernel ridge regressions of  $\mb X^{(n)}$ and $\mb Y^{(n)}$ on $\mb Z^{(n)}$ in the following way. For $\lambda > 0$, let
\[
\hat{f}_\lambda = \argmin_{h \in \mathcal{H}} \bigg\{ \frac{1}{n}\sum_{i=1}^n \{x_i-h(z_i)\}^2 + \lambda \|h\|_{\mathcal{H}}^2\bigg\}.
\]
We will consider selecting a final tuning parameter $\hat{\lambda}$ in the following data-dependent way:
\[
\hat{\lambda} = \argmin_{\lambda >0} \bigg\{\frac{1}{n} \sum_{i=1}^n \frac{\hat{\mu}_i^2}{(\hat{\mu}_i + \lambda)^2} + \lambda\bigg\}.
\]
The term minimised on the RHS is an upper bound on the mean-squared prediction error omitting constant factors depending on $\sigma^2$ (defined below in Theorem~\ref{THM:KERNEL}) and $\|f_P\|_{\mathcal{H}}^2$ or $\|g_P\|_{\mathcal{H}}^2$. Because of the hidden dependence on these quantities, this is not necessarily a practically effective way of selecting $\lambda$: our use of it here is simply to facilitate theoretical analysis.
Finally define $\hat{f} = \hat{f}_{\hat{\lambda}}$, and define $\hat{g}$ analogously. We will write $T^{(n)}$ for the test statistic formed as in \eqref{eq:T_def} with these choices of $\hat{f}$ and $\hat{g}$.

\begin{theorem}\label{THM:KERNEL}
Let $\mathcal{P}$ be such that $u_P(z), v_P(z) \leq \sigma^2$ for all $z$ and $P \in \mathcal{P}$.
\begin{enumerate}[(i)]
\item For any $P \in \mathcal{P}$, $\sup_{t \in \R}|\pr_P(T^{(n)} \leq t)-\Phi(t)|\to 0$.
\item Suppose $\sup_{P \in \mathcal{P}}\E_P\{|\varepsilon_P\xi_P|^{2+\eta}\}\leq c$ and $\inf_{P \in \mathcal{P}} \E_P(\varepsilon_P^2 \xi_P^2) > c^{-1}$ for some $c\geq 0$ and $\eta > 0$. Suppose further that $\sup_{P \in \mathcal{P}} \max(\|f_P\|_{\mathcal{H}}, \|g_P\|_{\mathcal{H}}) < \infty$  and
\begin{equation} \label{eq:mu_cond}
\lim_{\lambda \downarrow 0} \sup_{P \in \mathcal{P}} \sum_{j=1}^\infty \min(\mu_{P,j}, \lambda) = 0.
\end{equation}
Then
\[
\sup_{P \in \mathcal{P}} \sup_{t \in \R} |\pr_P(T^{(n)} \leq t) - \Phi(t)| \to 0.
\]
\end{enumerate}
\end{theorem}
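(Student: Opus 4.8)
The strategy is to reduce both claims to Theorem~\ref{thm:univariate} by verifying its hypotheses, so the entire burden becomes controlling the in-sample mean squared prediction errors $A_f, A_g$ (and the weighted versions $B_f, B_g$) for kernel ridge regression with the oracle-bound-minimising choice of $\hat\lambda$. For part~(i), fix $P \in \mathcal{P}$; I would invoke a standard deterministic bound for kernel ridge regression stating that $A_f \lesssim (1/n)\sum_{i=1}^n \hat\mu_i^2/(\hat\mu_i + \lambda)^2 \cdot \sigma^2 + \lambda \|f_P\|_{\mathcal H}^2$, with an analogous bound for $A_g$, on a high-probability event; since $\hat\lambda$ is chosen precisely to minimise (the relevant surrogate for) the right-hand side, one gets $A_f = O_P(\delta_n^2)$ where $\delta_n^2 = \inf_{\lambda>0}\{(1/n)\sum_i \hat\mu_i^2/(\hat\mu_i+\lambda)^2 + \lambda\}$. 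The key fact is that $\delta_n \to 0$ in probability as $n \to \infty$ whenever the kernel has summable eigenvalues — this follows because the empirical eigenvalues $\hat\mu_i$ concentrate around the population ones and $\sum_j \min(\mu_{P,j},\lambda) \to 0$ as $\lambda \downarrow 0$ for any fixed summable sequence. Then $A_f A_g = O_P(\delta_n^4)$, and since $\delta_n^2 = o_P(n^{-1/2})$ would be more than enough, but actually we only need $\delta_n^4 = o_P(n^{-1})$, i.e. $\delta_n^2 = o_P(n^{-1/2})$; one checks this holds at the usual kernel rates. For $B_f, B_g$: since $u_P, v_P \le \sigma^2$ uniformly, $B_f \le \sigma^2 A_f = o_P(1)$ and likewise $B_g$. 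Finally $0 < \E_P(\varepsilon_P^2\xi_P^2) < \infty$: the upper bound needs the second moment of $\varepsilon_P^2\xi_P^2$, which for part~(i) I would need to assume is implicitly available (or it follows from $\sigma^2$-boundedness of the conditional second moments plus a mild moment assumption); the lower bound is nondegeneracy, which holds for distributions in $\mathcal P$. Invoking Theorem~\ref{thm:univariate}(i) completes part~(i).

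For part~(ii) the structure is identical but everything must be made uniform over $P \in \mathcal P$. The additional hypotheses are exactly what is needed: the uniform moment bound $\sup_P \E_P|\varepsilon_P\xi_P|^{2+\eta} \le c$ supplies the moment condition in Theorem~\ref{thm:univariate}(ii) (and, with $\sigma^2$-boundedness, also $\inf_P \E_P(\varepsilon_P^2\xi_P^2) \ge c_1$ after possibly shrinking $\mathcal P$ or noting the lower bound is part of the standing assumptions); the uniform RKHS-norm bound $\sup_P \max(\|f_P\|_{\mathcal H}, \|g_P\|_{\mathcal H}) < \infty$ makes the bias term $\lambda\|f_P\|_{\mathcal H}^2$ uniformly controlled; and \eqref{eq:mu_cond} is precisely the uniform analogue of ``$\sum_j\min(\mu_{P,j},\lambda)\to 0$'' that I used above, now forcing $\delta_n \to 0$ \emph{uniformly} in the sense that $\sup_P \pr_P(\delta_n^2 > \epsilon) \to 0$, i.e. $A_f = o_{\mathcal P}(\cdot)$. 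Then $A_f A_g = o_{\mathcal P}(n^{-1})$, $B_f \le \sigma^2 A_f = o_{\mathcal P}(1)$, $B_g = o_{\mathcal P}(1)$, and Theorem~\ref{thm:univariate}(ii) gives the uniform Berry--Esseen-type conclusion.

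The main obstacle — and the step requiring the most care — is translating \eqref{eq:mu_cond}, a condition on \emph{population} eigenvalues, into a uniform-in-$P$ bound on the \emph{empirical} quantity $\delta_n^2 = \inf_\lambda\{(1/n)\sum_i \hat\mu_i^2/(\hat\mu_i+\lambda)^2 + \lambda\}$. This needs a uniform concentration argument for the empirical effective dimension $\sum_i \hat\mu_i^2/(\hat\mu_i+\lambda)^2$ around its population counterpart $\sum_j \mu_{P,j}^2/(\mu_{P,j}+\lambda)^2$ (or the cruder $\sum_j\min(\mu_{P,j},\lambda)/\lambda$), valid simultaneously over $P \in \mathcal P$ and over $\lambda$ ranging in the relevant interval. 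One route is via bounds on $\|K - \mathcal K_P\|$ where $\mathcal K_P$ is the population integral operator, using matrix concentration under the $\sigma^2$ boundedness, and then a resolvent/perturbation argument; the delicate point is that for very small $\lambda$ the ratio $\sum \hat\mu_i^2/(\hat\mu_i+\lambda)^2$ is sensitive to small eigenvalues, so one must choose $\lambda$ bounded below by a slowly vanishing sequence and show the resulting $\delta_n^2$ still tends to zero (uniformly) thanks to \eqref{eq:mu_cond}. The rest — the deterministic kernel ridge regression bias-variance bound and the reduction to Theorem~\ref{thm:univariate} — is routine.
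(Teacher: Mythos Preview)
Your overall strategy matches the paper's: reduce to Theorem~\ref{thm:univariate} by controlling $A_f$ and $A_g$ (and hence $B_f \leq \sigma^2 A_f$, $B_g \leq \sigma^2 A_g$). The difference lies entirely in the step you flag as ``the main obstacle'' --- passing from empirical to population eigenvalues --- where the paper takes a considerably simpler route than the concentration/perturbation argument you sketch.

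The paper avoids high-probability eigenvalue control altogether by working in expectation. Via Remark~\ref{rem:univariate} it suffices to show $\sup_{P \in \mathcal P} \E_P A_f = o(n^{-1/2})$. First, a deterministic fixed-design kernel ridge regression bound (Lemma~\ref{lem:KRR_bd}) gives
\[
\E_P\big[A_f \,\big|\, \mb Z^{(n)}\big] \;\lesssim\; \max(\sigma^2,\|f_P\|_{\mathcal H}^2)\,\inf_{\lambda>0}\Big\{\frac{1}{n\lambda}\sum_{i=1}^n \min(\hat\mu_i,\lambda) + \lambda\Big\}.
\]
Next the trivial inequality $\E_P \inf_\lambda\{\cdots\} \leq \inf_\lambda \E_P\{\cdots\}$ moves the infimum outside, so one only needs to control $\E_P\big[n^{-1}\sum_i \min(\hat\mu_i,\lambda)\big]$ for a \emph{fixed} $\lambda$. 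This is supplied directly by a result of Koltchinskii (Lemma~\ref{lem:pop_evals}): $\E_P n^{-1}\sum_i \min(\hat\mu_i,r) \lesssim n^{-1}\sum_j \min(\mu_{P,j},r)$. No resolvent identities, no operator-norm concentration, no uniformity in $\lambda$ is needed. Finally, the explicit choice $\lambda_{P,n} = n^{-1/2}\sqrt{\phi_P(n^{-1/2})}$ with $\phi_P(\lambda)=\sum_j \min(\mu_{P,j},\lambda)$ converts \eqref{eq:mu_cond} into the required $o(n^{-1/2})$ rate uniformly in $P$; this is the step your proposal leaves as ``one checks this holds at the usual kernel rates''. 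Part~(i) is then just the special case $\mathcal P = \{P\}$, where $\phi_P(\lambda)\to 0$ follows from dominated convergence and eigenvalue summability.

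Your proposed route via matrix concentration and perturbation of the empirical effective dimension could be made to work, but it is harder than necessary: the expectation-level argument sidesteps the ``delicate point'' about small $\lambda$ entirely, since after exchanging $\E$ and $\inf$ one simply plugs in a deterministic $\lambda_{P,n}$.
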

A proof is given in the supplementary material.
\begin{remark}
An application of the dominated convergence theorem shows that a sufficient condition for \eqref{eq:mu_cond} to hold is that $\sum_{j=1}^\infty \sup_{P \in \mathcal{P}} \mu_{P,j} <\infty$.
\end{remark}
The proof proceeds by first showing that the ridge regression estimators $\hat{f}$ and $\hat{g}$ satisfy $A_f A_g = o_P(n^{-1})$ and then applies Theorem~\ref{thm:univariate}.
The requirement that $f_P$ and $g_P$ lie in an RKHS satisfying \eqref{eq:Mercer} is a rather weak regularity condition on the conditional expectations. For example, taking the first-order Sobolev kernel shows that it is enough that the conditional expectations are Lipschitz when $d_Z=1$, $\mathbb{P}(Z \in [0,1]) = 1$ and the marginal density of $Z$ is bounded above \citep{bach2017equivalence}.
However, the uniformity offered by (ii) above requires $L:=\sup_{P \in \mathcal{P}} \max(\|f_P\|_{\mathcal{H}}, \|g_P\|_{\mathcal{H}}) < \infty$ and a large value of $L$ will require a large sample size in order for $T^{(n)}$ to have a distribution close to a standard normal. We investigate this, and evaluate the empirical performance of the GCM in the next section.

\section{Experiments} \label{sec:experiments}
Section~\ref{SEC:GCM} proposes the generalised covariance measure (GCM). 
Although we provide detailed computations for kernel ridge regression in Section~\ref{sec:kernel}, the technique can be combined with any regression method. In practice, the choice may depend on external knowledge of the specific application the user has in mind.
In this section, we study the empirical performance of the GCM with boosted regression trees as the regression method. 
In particular, we use 
the R package \texttt{xgboost} \citep{chen2018xgboost, chen2016xgboost} with a ten-fold cross-validation scheme over the parameter \texttt{maxdepth}.

\subsection{No-free-lunch in Conditional Independence Testing} \label{sec:exp:nfl}
Theorem~\ref{THM:NFL} \linebreak states that if a conditional independence test has power against an alternative at a given sample size, then there is a distribution from the null that is rejected with probability larger than the significance level.
We now illustrate the no-free-lunch theorem
empirically.

Let us 
fix an RKHS~$\mathcal{H}$ that corresponds to a Gaussian kernel with bandwidth~$\sigma = 1$.
We now compute for different sample sizes the rejection rates for data sets generated from the following 
model:
$Z = N_Z$,
$Y = f_a(Z) + N_Y$, and
$X = f_a(Z) + N_X$,
with 
$N_X, N_Y, N_Z \sim \mathcal{N}(0,1)$, i.i.d.,
and 
$f_a(z) := \exp(-z^2/2) \sin(az)$
defining a function $f_a \in \mathcal{H}$.
Figure~\ref{fig:exp:nfl-fa}
shows a plot of $f_a$ for $a=6$ and $a=18$.
\begin{figure}
\centerline{\includegraphics[width = 0.48\textwidth]{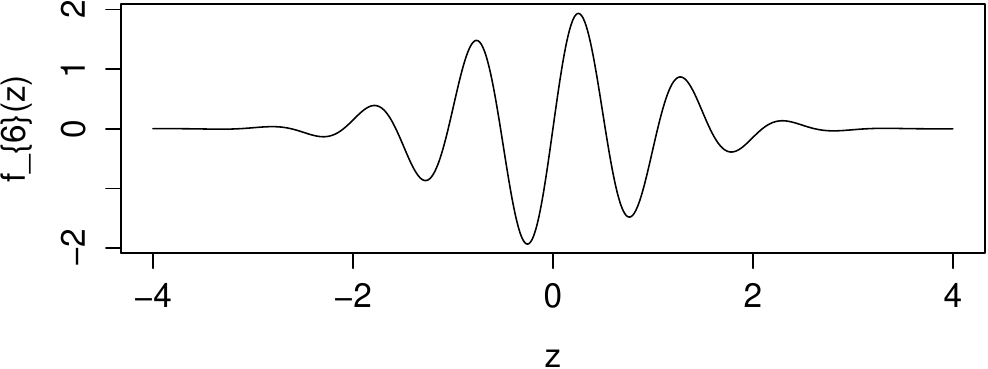}
\hfill
\includegraphics[width = 0.48\textwidth]{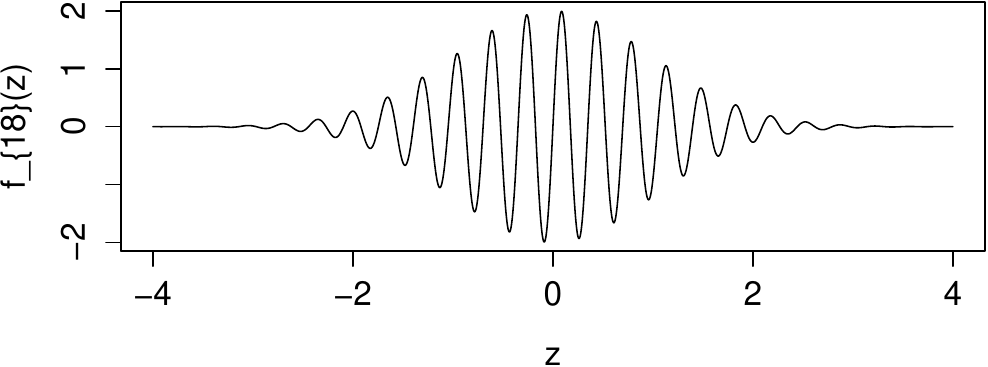}
}
\caption{\label{fig:exp:nfl-fa}Graphs of the function $f_a$ for $a=6$ (left) and $a=18$ (right). This function is used as the conditional mean that needs to be estimated from data. The RKHS norm increases exponentially with $a$, see \eqref{eq:exp:rkhsnorm}.}
\end{figure}
Clearly, for any $a$, we have
$X \independent Y\given Z$, but
for large values of $a$ the independence will be harder to detect from data.
We now fix three different sample sizes $n=100$, $n=1000$, and $n=10000$. For any of such sample size $n$, we can find an $a$, i.e., a distribution from the null, such that the probability of (falsely) rejecting 
$X \independent Y\given Z$
is larger than the prespecified level~$\alpha$.
Figure~\ref{fig:exp:nfl}
shows the results for 
the GCM test with boosted regression trees
and the significance level $\alpha = 0.05$:
for any sample size, there exists a distribution from the null, for which the test rejects the null hypothesis of conditional independence.
\begin{figure} 
\centerline{\includegraphics[width = 0.9\textwidth]{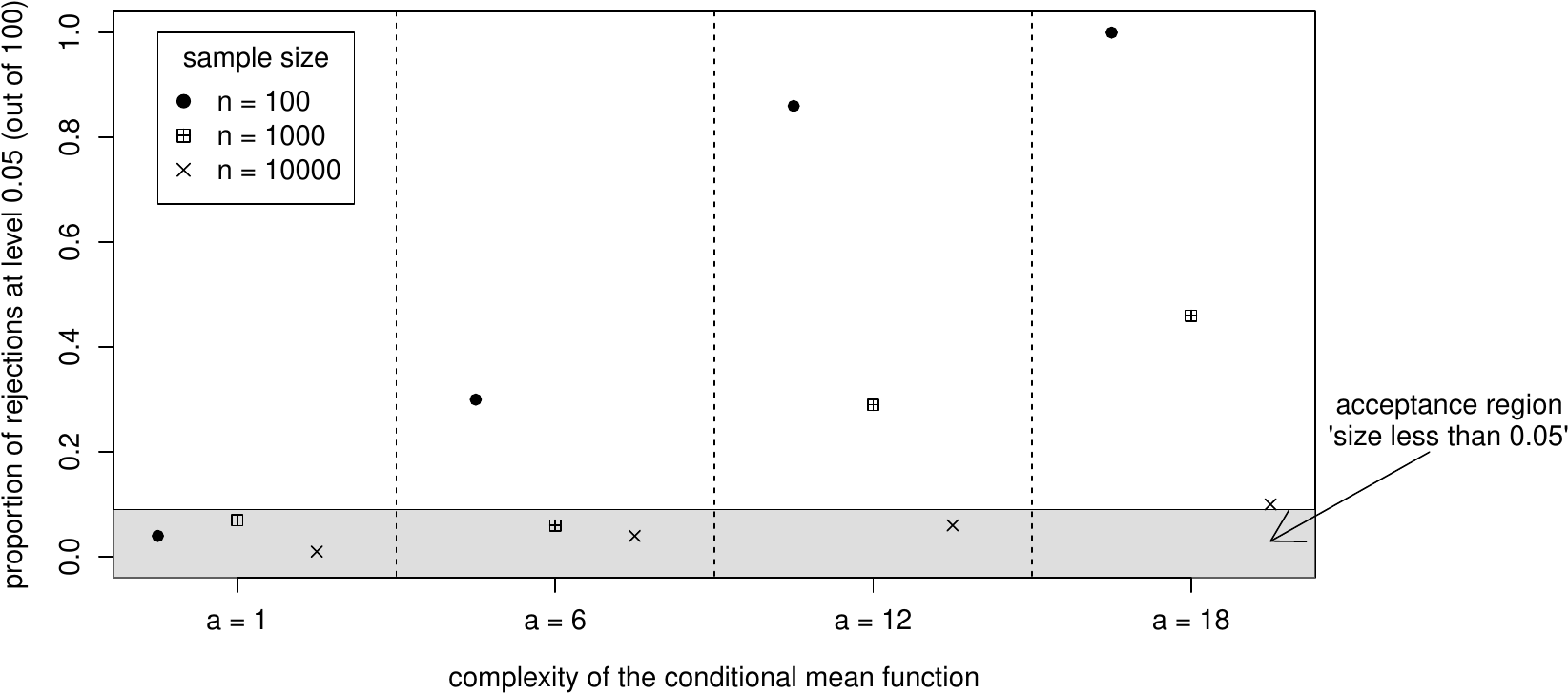}}
\caption{\label{fig:exp:nfl}Illustration of the no-free-lunch theorem, see Section~\ref{sec:exp:nfl}. No sample size is large enough to ensure the correct level for all distributions from the null: there is always a distribution from the null which yields a type I
error that is larger than the prespecified significance level of $0.05$. 
The shaded area indicates the area in which we accept the null hypothesis that the size of the test is less than $0.05$.
}
\end{figure}
For $n=100$, we can choose $a=6$, for $n=1000$, we choose $a=12$, and for $n=10000$, $a=18$.

This sequence of distributions violates one of the assumptions that we require for the GCM test to obtain
uniform asymptotic level guarantee.
Intuitively, 
for large $a$,
the conditional expectations 
$z \mapsto \E[X|Z=z]$ and
$z \mapsto \E[Y|Z=z]$
are too complex to be estimated reliably from the data.
More formally, 
the RKHS norm 
of the functions~$f_a$ are defined as:
\begin{align} \label{eq:exp:rkhsnorm}
\|f_a\|^2_{\mathcal{H}}
= \int_{-\infty}^{\infty} F_a(\omega)^2 \exp(\sigma^2 \omega^2/2) \,d\omega
= \sqrt{8\pi} \cdot \big(\exp(a^2) + \exp(-a^2)\big),
\end{align}
where 
$$
F_a(\omega) = \exp \big(-(\omega - a)^2/2 \big) + \exp \big(-(\omega + a)^2/2 \big)
$$ 
is the Fourier transform of $f_a$.
Equation~\eqref{eq:exp:rkhsnorm} shows that 
a null hypothesis $\mathcal{P}$
containing all of the above models for $a > 0$,
violates 
one 
of the assumptions in Theorem~\ref{THM:KERNEL}:
for this choice of RKHS and null hypothesis there is no $M$ such that
$\sup_{P \in \mathcal{P}} \max(\|f_P\|_{\mathcal{H}}, \|g_P\|_{\mathcal{H}}) < M$. 
(Note that not all sequences of functions with growing 
RKHS norm also yield a violation of level guarantees:
some functions with large RKHS norm, e.g., modifications of constant functions, can be easily learned from data.) 
Other conditional independence tests fail on the examples in Figure~\ref{fig:exp:nfl}, too, for a similar reason. However most of these other methods are less transparent in the underlying assumptions, since they do not come with uniform level guarantees.

\subsection{On Level and Power}
It is of course impossible to provide an exhaustive simulation-based level and power analysis.
We therefore concentrate on a small choice of distributions from the null and the alternative. 
In the following, we compare the GCM with three other conditional independence tests: KCI \citep{Zhang2011uai} with its implementation from \texttt{CondIndTests} \citep{Heinze2017}, and the residual prediction test \citep{Heinze2017, Shah2018}.
We also compare to a test that
performs the same regression as GCM, but then tests for independence between the residuals, rather than vanishing correlation, using HSIC \citep{Gretton2008}. 
(This procedure is similar to the one that \citet{Fan2015} propose to use in the case of additive noise models.)
As we discuss in Example~\ref{ex:hetero}, we do not expect this test to hold level in general.
We then consider the following distributions from the null:
\begin{enumerate}[(a)]
\item $Z \sim \mathcal{N}(0,1)$, 
$X = f_a(Z) + 0.3 \mathcal{N}(0,1)$,
$Y = f_a(Z) + 0.3 \mathcal{N}(0,1)$,
$a=2$;
\item the same as (a) but with $a=4$;
\item 
$Z_1, Z_2 \sim \mathcal{N}(0,1)$ independent, 
$X = f_1(Z_1) - f_1(Z_2) + 0.3 \mathcal{N}(0,1)$,
$Y = f_1(Z_1) + f_1(Z_2) + 0.3 \mathcal{N}(0,1)$;
\item 
$Z \sim \mathcal{N}(0,1)$, 
$X_1 = f_1(Z) + 0.3 \mathcal{N}(0,1)$,
$X_2 = f_1(Z) + X_1 + 0.3 \mathcal{N}(0,1)$,
$Y_1 = f_1(Z) + 0.3 \mathcal{N}(0,1)$,
$Y_2 = f_1(Z) + Y_1 + 0.3 \mathcal{N}(0,1)$; and
\item $Z \sim \mathcal{N}(0,1)$, 
$Y = f_2(Z) \cdot  \mathcal{N}(0,1)$,
$X = f_2(Z) \cdot \mathcal{N}(0,1)$.
\end{enumerate}
In the remainder of this section, we refer to these settings as 
(a) ``$a=2$'',
(b) ``$a=4$'',
(c) ``biv.\ $Z$'',
(d) ``biv.\ $X, Y$'',
and 
(e) ``multipl.\ noise'', respectively.
For each of the sample sizes $50$, $100$, $200$, $300$, and $400$,
we first generate $100$ data sets, and 
then 
compute rejection rates of the 
considered conditional independence
tests.
The results are shown in Figure~\ref{fig:exp:exp3level}. 
For rejection rates below $0.11$ the hypothesis ``the size of the test is less than $0.05$'' is not rejected at level 0.01 (pointwise). 
The GCM indeed has promising behaviour in terms of type I error control. 
As expected, however, it requires the sample size to be big enough to obtain a reliable estimate for the conditional mean.
\begin{figure}[h]
\centerline{
\includegraphics[width = 0.22\textwidth]{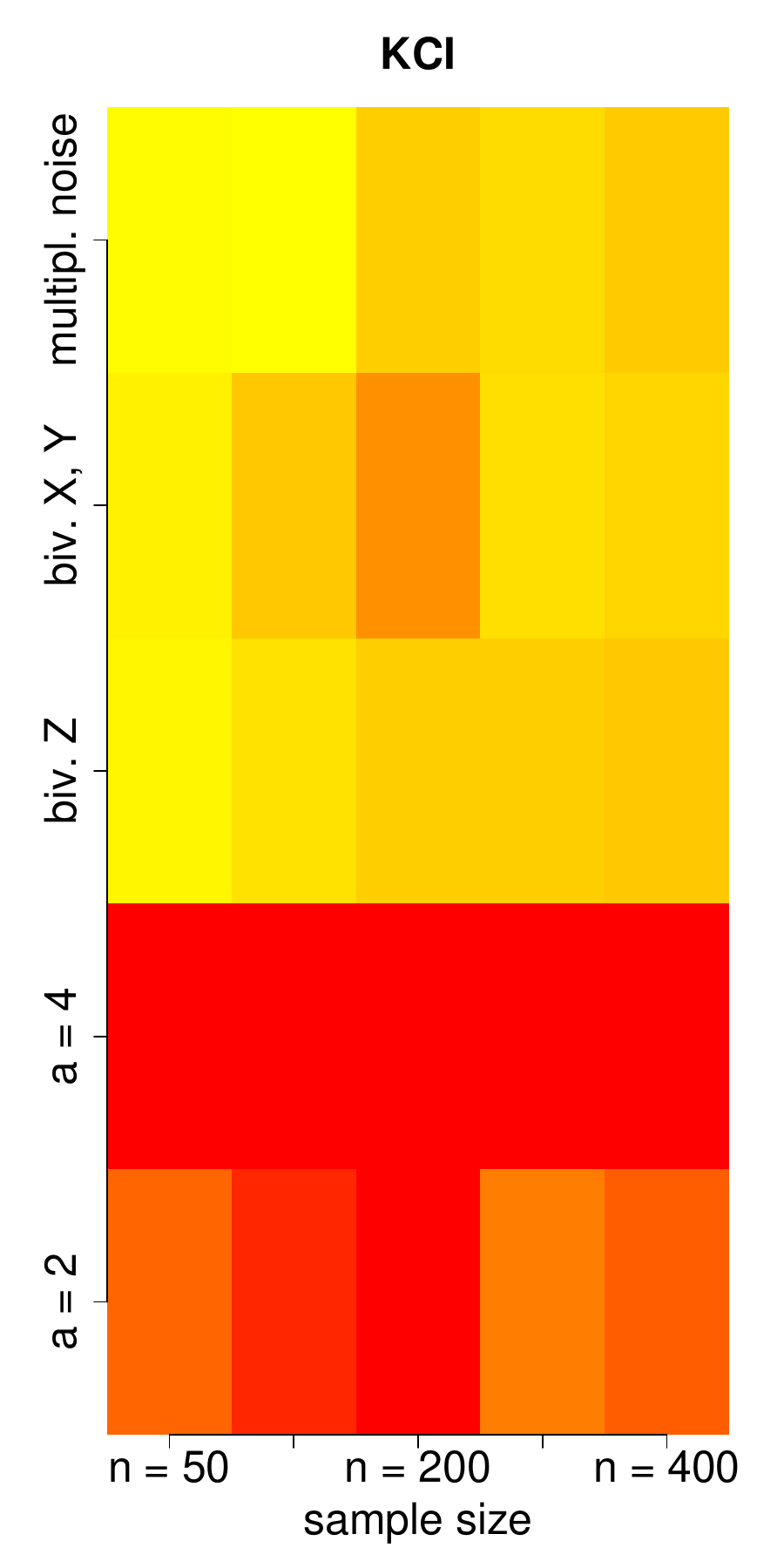}
\includegraphics[width = 0.22\textwidth]{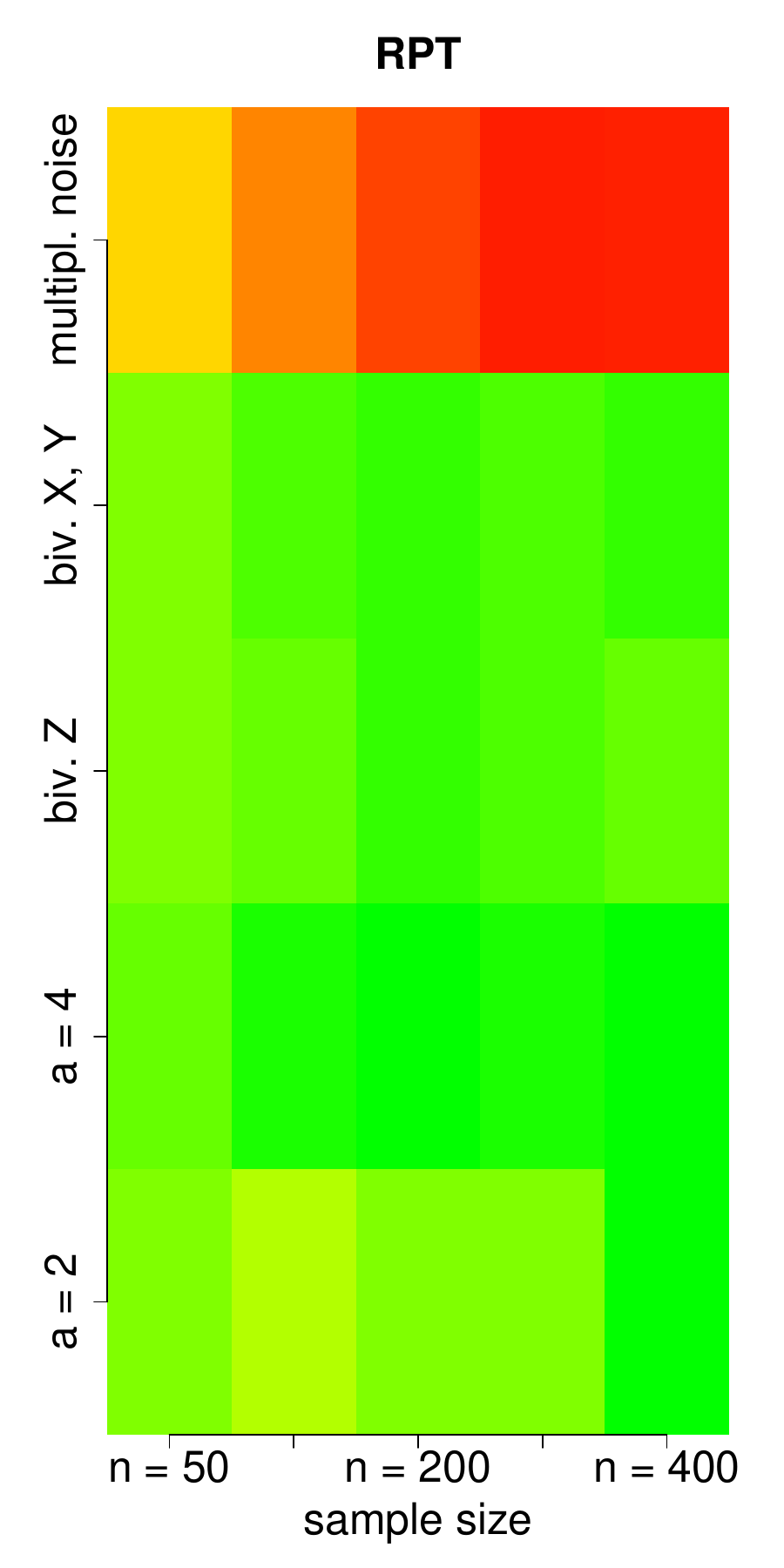}
\includegraphics[width = 0.22\textwidth]{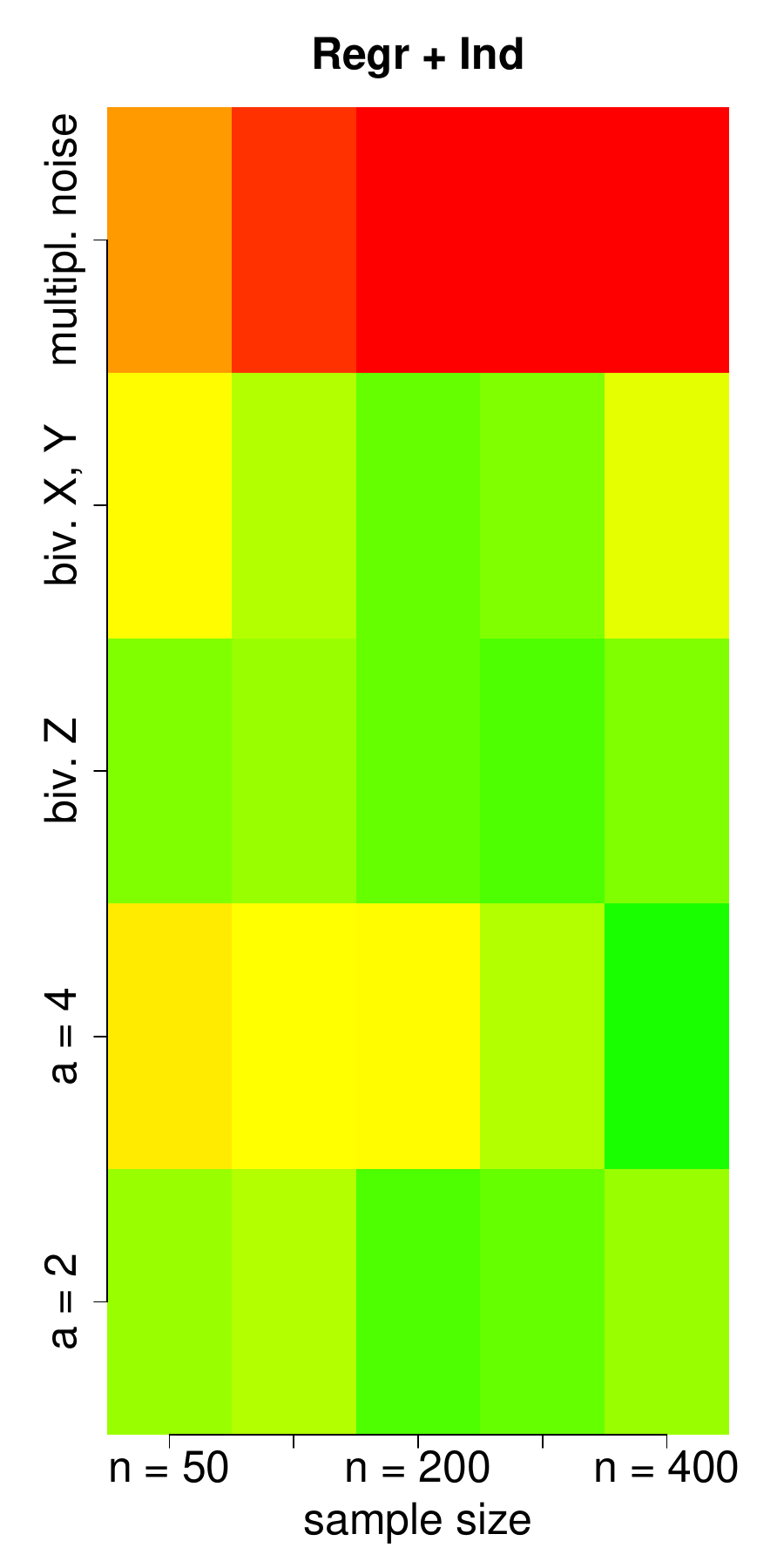}
\includegraphics[width = 0.22\textwidth]{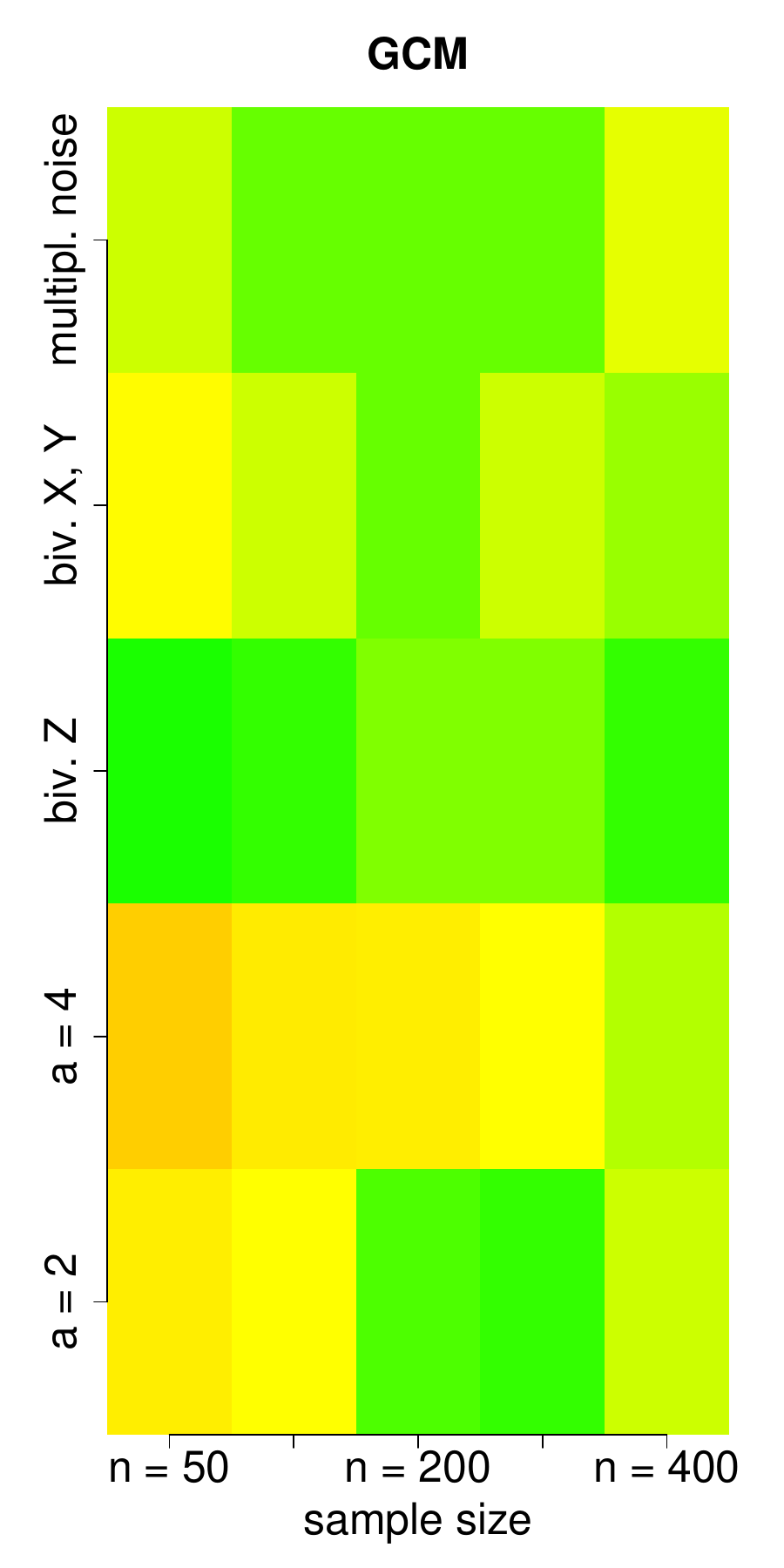}
\hfill
\raisebox{0.5\height}{
\includegraphics[width = 0.1\textwidth]{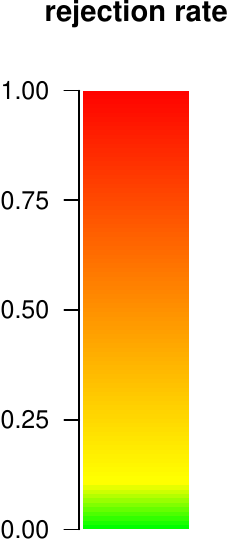}
}
}
\caption{\label{fig:exp:exp3level}Level analysis: the GCM can hold the level if the sample size is large enough to reliably estimate the conditional mean. Testing for independence between residuals does not hold the level (third plot).}
\end{figure}

We then investigate the tests' power 
by altering the 
data generating processes (a)--(e), described
above. 
Each equation for $Y$ receives an additional term $+ 0.2X$, which 
yields
$X \notindependent Y \given Z$
(for (d), we add the term $+ 0.2X_2$ to the equation of $Y_2$). 
Figure~\ref{fig:exp:exp3power} 
shows empirical rejection rates.
\begin{figure}[h]
\centerline{
\includegraphics[width = 0.22\textwidth]{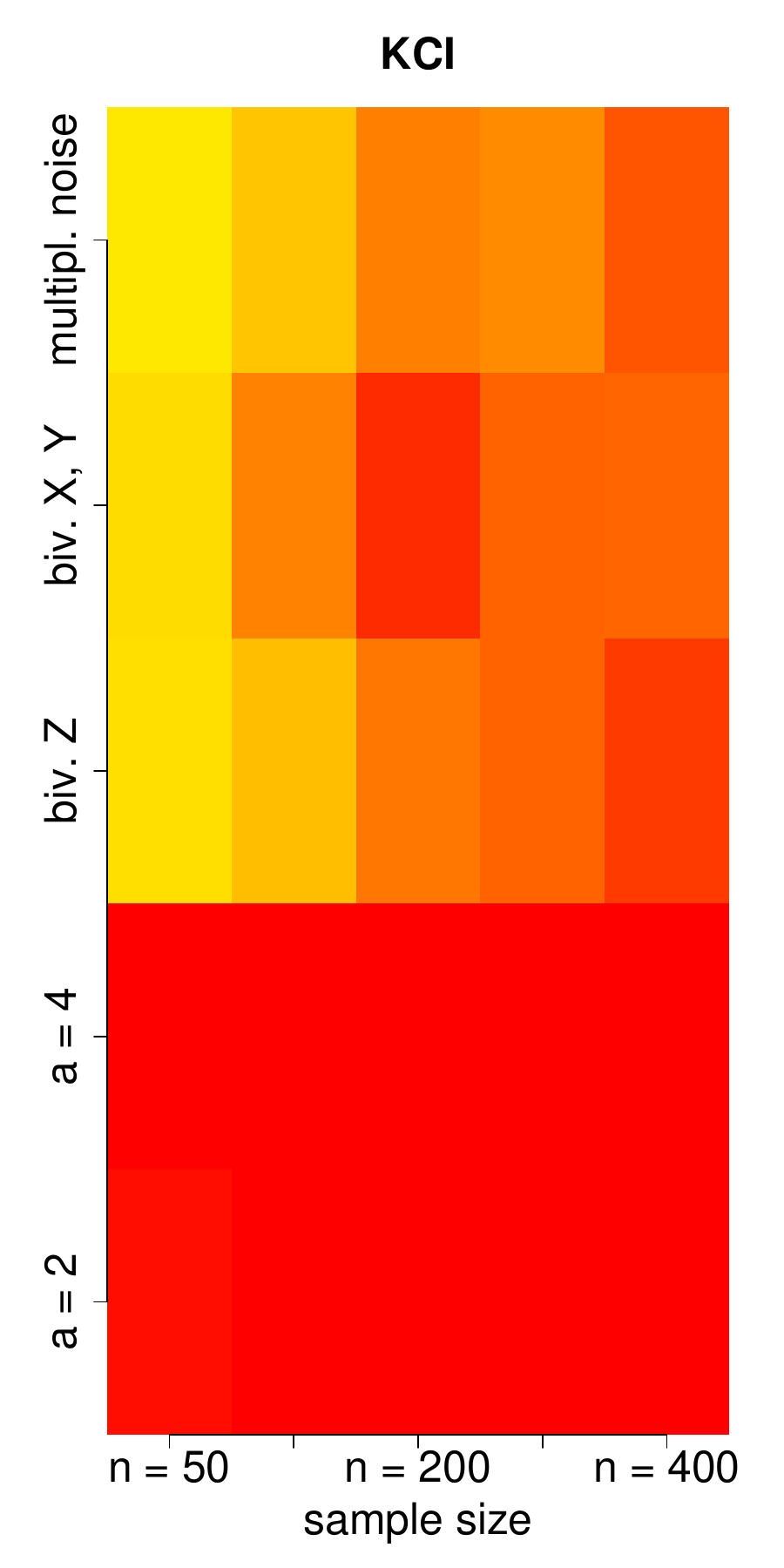}
\includegraphics[width = 0.22\textwidth]{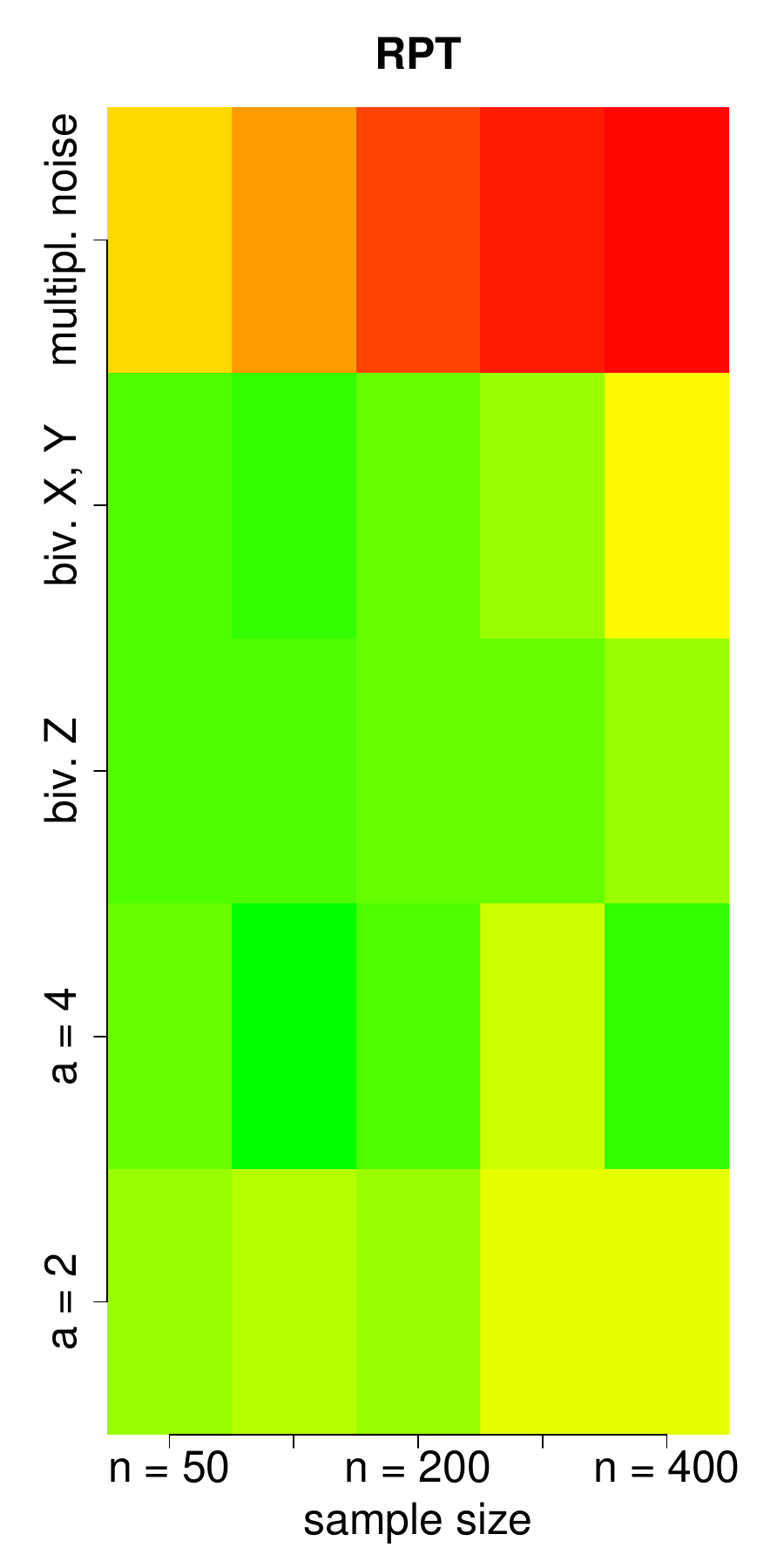}
\includegraphics[width = 0.22\textwidth]{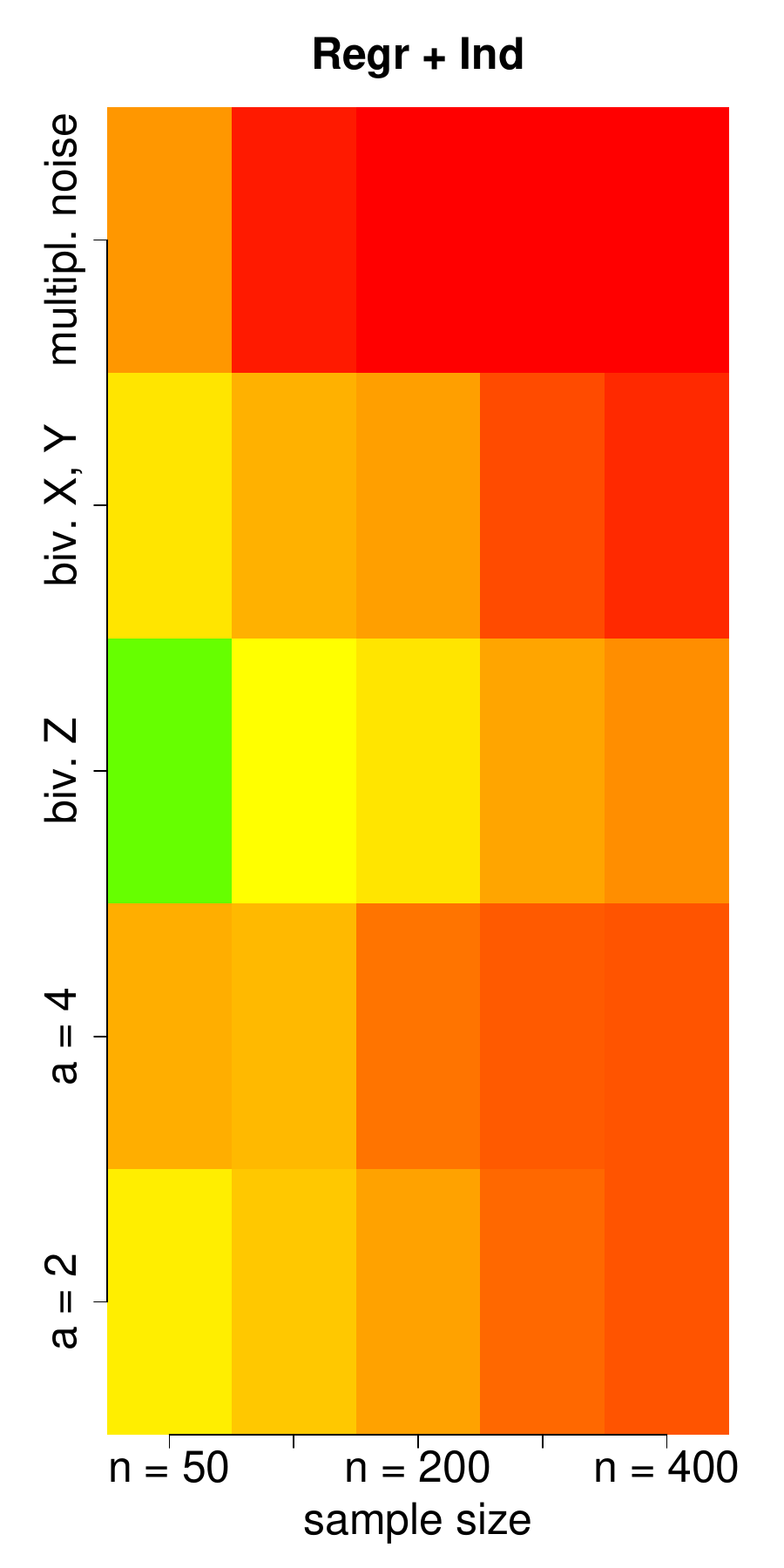}
\includegraphics[width = 0.22\textwidth]{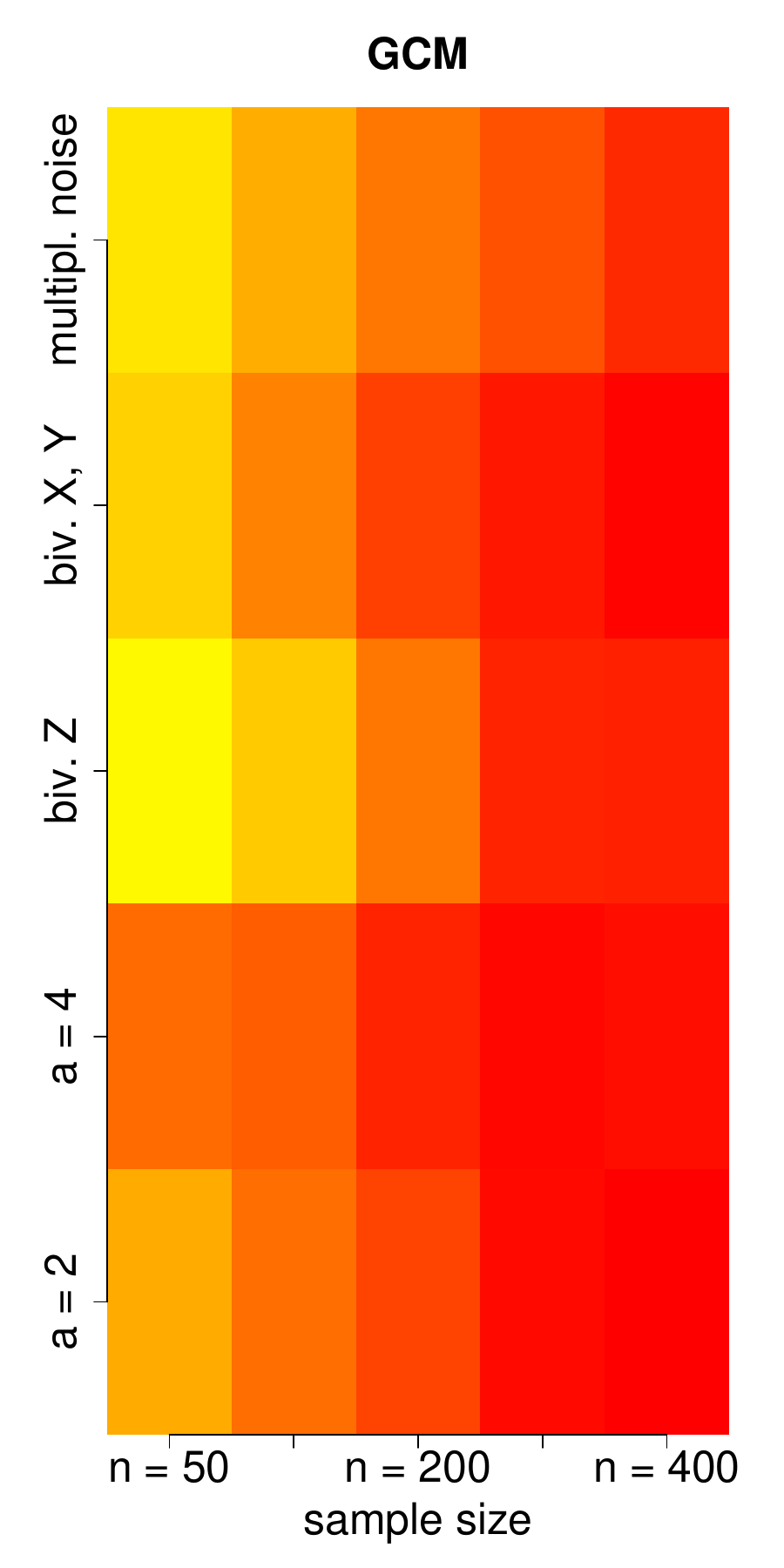}
\hfill
\raisebox{0.5\height}{
\includegraphics[width = 0.1\textwidth]{experiment3mult-colorbar-cut}
}
}
\caption{\label{fig:exp:exp3power}Power analysis: most of the methods are able to detect if the distribution does not satisfy conditional independence, in particular if the sample size increases. 
}
\end{figure}
All methods, except for RPT, 
are able to correctly reject the hypothesis that the distribution is from the null, particularly with increasing sample size. 
In our experimental setup,
it is the level analysis, 
that poses a greater challenge for the methods other than GCM.

\section{Discussion} \label{sec:discussion}
A key result of this paper is that conditional independence testing is hard: non-trivial tests that maintain valid level over the entire class of distributions satisfying conditional independence and that are absolutely continuous with respect to Lebesgue measure cannot exist.
In unconditional independence testing, control of type I error is straightforward and research efforts have focussed on power properties of tests. Our result indicates that in conditional independence testing, the basic requirement of type I error control deserves further attention.
We argue that as domain knowledge is necessary in order to select a conditional independence test appropriate for a particular setting, there is a need to develop conditional independence tests whose suitability is reasonably straightforward to judge.

In this work we have introduced the GCM framework to address this need.
The ability for the GCM to maintain the correct level relies almost exclusively on the predictive properties of the regression procedures upon which it is based. Selecting a good regression procedure, whilst mathematically an equally impossible problem, can at least be usefully informed by domain knowledge. We hope to see further applications of GCM-based tests in the future. %
On the theoretical side, it would be interesting to understand more precisely the tradeoff between the type I and type II errors in conditional independence testing. Often, work on testing fixes a null and then considers what sorts of classes of alternative distributions it is possible, or impossible to maintain power against. In the context of conditional independence testing, the problem set is even richer, in that one must also consider subclasses of null distributions, and can then study power properties associated with that null.

\section*{Acknowledgements}
We thank Kacper Chwialkowski, Kenji Fukumizu, Arthur Gretton, Bernhard Sch\"olkopf, and Ilya Tolstikhin for helpful discussions, initiated by BS, on the hardness of conditional independence testing, and KC and AG for helpful comments on the manuscript.
BS has raised the point that for finitely many data conditional independence testing may be arbitrarily hard in several of his talks, e.g., at the Machine Learning Summer School in T\"ubingen in 2013.
We are very grateful to Matey Neykov, Anton Lundborg and Cyrill
Scheidegger for kindly pointing out some errors in earlier versions of this manuscript, and suggesting potential fixes.
We also thank Peter B\"uhlmann for helpful discussions regarding the aggregation of tests via taking the maximum test statistic. Finally, we thank four anonymous referees and an associate editor for helpful comments that have improved the manuscript.
\appendix

\section{Proof of Theorem~\ref{THM:NFL}}
The proof of Theorem~\ref{THM:NFL} relies heavily on Lemma~\ref{lem:cond_ind_approx} in Section~\ref{sec:aux_lem}, which 
shows that given any distribution $Q$ where $(X, Y, Z) \sim Q$, one can construct $(\tilde{X}, \tilde{Y}, \tilde{Z})$ with $\tilde{X} \independent \tilde{Y} \given \tilde{Z}$ where $(\tilde{X}, \tilde{Y}, \tilde{Z})$ and $(X, Y, Z)$ are arbitrarily close in $\ell_\infty$-norm with arbitrarily high probability.

In the proofs of Theorem~\ref{THM:NFL} and Lemma~\ref{lem:cond_ind_approx} below, we often suppress dependence on $n$ to simplify the presentation. Thus for example, we write $\mb X$ for $\mb X^{(n)}$.
We use the following notation. We write $s=(d_X + d_Y + d_Z)$ and will denote by $V \in \R^s$ the triple $(X, Y, Z)$. Furthermore, $\mb V := (\mb X, \mb Y, \mb Z)$. We denote by $p_{X,Y,Z}$ the density of $(X, Y, Z)$ with respect to Lebesgue measure. We will use $\mu$ to denote Lebesgue measure on $\R^{ns+1}$ and write $\triangle$ for the symmetric difference operator.

\subsection{Proof of Theorem~\ref{THM:NFL}}
Suppose, for a contradiction, that there exists a $Q$ with support strictly contained in an $\ell_\infty$-ball of radius $M$ under which $X \notindependent Y \given Z$ but 
$\prob_Q(\psi_n(\mb V; U) = 1) = \beta > \alpha$.
We will henceforth assume that $V \sim Q$ and $\mb V := (\mb X, \mb Y, \mb Z)$ are i.i.d.\ copies of $V$. Thus we may omit the subscript $Q$ applied to probabilities and expectations in the sequel.
Denote the rejection region by
\[
R = \{(\mb x, \mb y, \mb z; u) \in \R^{ns} \times [0,1] :\psi_n(\mb x, \mb y, \mb z; u)=1\}.
\]
Our proof strategy is as follows. Using Lemma~\ref{lem:cond_ind_approx} we will create $\tilde{V} := (\tilde{X}, \tilde{Y}, \tilde{Z})$ such that  $\tilde{X} \independent \tilde{Y} \given \tilde{Z}$ but $\tilde{V}$ is suitably close to $V$ such that a corresponding i.i.d.\ sample $\tilde{\mb V} := (\tilde{\mb X}, \tilde{\mb Y}, \tilde{\mb Z})  \in \mathbb{R}^{ns}$ satisfies $\prob((\tilde{\mb V}, U) \in R) > \alpha$, 
contradicting that $\psi_n$ has valid level $\alpha$.
How close $\tilde{V}$ needs to be to $V$ in order for this argument to work depends on the rejection region $R$. As an arbitrary Borel subset of $\R^{ns} \times [0,1]$, $R$ can be arbitrarily complex. In order to get a handle on it we will construct an approximate version $R^{\sharp}$ of $R$ that is a finite union of boxes; see Lemma~\ref{lem:borel_approx}.

Let $\eta = (\beta - \alpha)/7 >0$.
Since $\{(x, y, z) : p_{X,Y,Z}(x, y, z) > m\} =:B_m \downarrow \emptyset$ as $m \uparrow \infty$, there exists $M_1$ such that $\pr((X, Y, Z) \in B_{M_1}^c) > 1-\eta/n$. Let $\Omega_1$ be the event that $(x_i, y_i, z_i) \in B_{M_1}^c$ 
for all $i=1,\ldots,n$. 
(Here and below, an event refers to an element in the underlying $\sigma$-algebra. Recall that
$x_i$, $y_i$, and $z_i$ denote 
rows of $\mb X$, $\mb Y$, and $\mb Z$, respectively, i.e., they are random vectors.)
Then by a union bound we have 
$\pr(\Omega_1) \geq 1 - \eta$.

Let $M_2$ be such that $\prob(\|\mb V\|_\infty > M_2) < \eta$ and let $\Omega_2$ be the event that $\|\mb V\|_\infty \leq M_2$.
Further define
\[
\check{R} = \{(\mb x, \mb y, \mb z, u) \in R: \|(\mb x, \mb y, \mb z)\|_\infty \leq M_2\}.
\]
Note that
\begin{align} \label{eq:check_R_bd}
\pr((\mb V, U) \in \check{R}) \geq \beta - \pr((\mb V, U) \in R\setminus\check{R}) > \beta - \eta.
\end{align}

Let $L=L(\eta)$ be as defined in Lemma~\ref{lem:cond_ind_approx} (taking $\delta = \eta$).
From Lemma~\ref{lem:borel_approx}
applied to $\check{R}$, we know there exists a finite union $R^{\sharp}$ of hypercubes each of the form
\[
\prod_{k=1,\ldots, ns+1} (a_k, b_k]
\]
such that $\mu(R^{\sharp} \triangle \check{R}) < \eta / \max(L, M_1^n)$. Now on the region $B_{M_1}^c$ defining $\Omega_1$ we know that the density of $(\mb V, U)$ is bounded above by $M_1^n$. Thus we have
that
\begin{equation} \label{eq:R_sharp_bd}
\pr(\{(\mb V, U) \in \check{R} \setminus R^{\sharp}\} \cap \Omega_1) < \eta.
\end{equation}

Now for $r \geq 0$ and $\mb v \in \R^{ns+1}$ let $B_r(\mb v)\subset \R^{ns+1}$ denote the $\ell_\infty$ ball with radius $r>0$ and center $\mb v$. Define
\[
R^r = \{\mb v \in R : B_r(\mb v) \subseteq R^{\sharp}\}.
\]
Then since $R^r \uparrow R^{\sharp}$ 
as $r \downarrow 0$, there exists $r_0 > 0$ such that $\mu(R^{\sharp} \setminus R^{r_0}) < \eta / M_1^n$.

For $\epsilon=r_0$ and $B=R^{\sharp} \setminus \check{R}$, 
the statement of Lemma~\ref{lem:cond_ind_approx} 
provides us 
with $\tilde{\mb V} := (\tilde{\mb X}, \tilde{\mb Y}, \tilde{\mb Z})$ which 
satisfies
$\pr((\tilde{\mb V}, U) \in R^{\sharp} \setminus \check{R}) < L \mu(R^{\sharp} \setminus \check{R})< \eta$ 
and 
with which we argue as follows. Let $\Omega_3$ be the event that $\|\mb V - \tilde{\mb V}\|_\infty < r_0$, so 
$\pr(\Omega_3) \geq 1 - \eta$.
\begin{align*}
\pr((\tilde{\mb V}, U) \in R) &\geq \pr((\tilde{\mb V}, U) \in \check{R}) 
\geq \pr((\tilde{\mb V}, U) \in R^{\sharp}) - \pr((\tilde{\mb V}, U) \in R^{\sharp} \setminus \check{R}) \\
&> \pr(\{(\tilde{\mb V}, U) \in R^{\sharp}\} \cap \Omega_3) - \eta 
> \pr((\mb V, U) \in R^{r_0}) - 2\eta \\
&\geq \pr((\mb V, U) \in R^{\sharp}) - \pr(\{(\mb V, U) \in R^{\sharp} \setminus R^{r_0}\} \cap \Omega_1) - \pr(\Omega_1^c) - 2\eta\\
&> \pr((\mb V, U) \in R^{\sharp}) - 4\eta.
\end{align*}
Now
\begin{align*}
\pr((\mb V, U) \in R^{\sharp}) &\geq \pr((\mb V, U) \in \check{R}) - \pr(\{(\mb V, U) \in \check{R} \setminus R^{\sharp}\} \cap \Omega_1) - \pr(\Omega_1^c) \\ 
&> \pr((\mb V, U) \in \check{R}) - 2\eta > \beta - 3\eta
\end{align*}
using \eqref{eq:R_sharp_bd} and \eqref{eq:check_R_bd}. Putting things together, we have
$\pr((\tilde{\mb V}, U) \in R) > \beta - 7\eta > \alpha$, 
completing the proof.

\subsection{Auxilliary Lemmas} \label{sec:aux_lem}
\begin{lemma} \label{lem:cond_ind_approx}
Let $(X, Y, Z)$ have a 
$(d_X + d_Y + d_Z)$-dimensional
distribution in $\mathcal{Q}_{0, M}$ for some  $M \in (0, \infty]$. Let $(\mb X^{(n)}, \mb Y^{(n)}, \mb Z^{(n)})$ be a sample of $n$ i.i.d.\ copies of $(X, Y, Z)$.
Given $\delta >0$, there exists $L=L(\delta)$ such that for all $\epsilon > 0$ and all Borel subsets $B \subseteq \R^{n \cdot (d_X + d_Y + d_Z)} \times [0,1]$, it is possible to construct 
$n$ i.i.d.\ random vectors $(\tilde{\mb X}^{(n)}, \tilde{\mb Y}^{(n)}, \tilde{\mb Z}^{(n)})$
with 
distribution $P \in \mathcal{P}_{0, M}$ where the following properties hold:
\begin{enumerate}[(i)]
\item $\pr(\|(\mb X^{(n)}, \mb Y^{(n)}, \mb Z^{(n)}) - (\tilde{\mb X}^{(n)}, \tilde{\mb Y}^{(n)}, \tilde{\mb Z}^{(n)})\|_\infty < \epsilon) > 1- \delta$;
\item If $U \sim U[0, 1]$ independently of $(\tilde{\mb X}^{(n)}, \tilde{\mb Y}^{(n)}, \tilde{\mb Z}^{(n)})$ then
\[
\pr((\tilde{\mb X}^{(n)}, \tilde{\mb Y}^{(n)}, \tilde{\mb Z}^{(n)}, U) \in B) \leq L \mu(B).
\]
\end{enumerate}
\end{lemma}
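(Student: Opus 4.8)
The plan is to make the informal ``embed $\checknew{X}$ inside $\checknew{Z}$'' idea precise through binary expansions, then randomise over many choices of embedding to control the measure-comparison property (ii) simultaneously. First I would fix $\delta > 0$ and, given the target accuracy $\epsilon$, choose a truncation level $N$ (depending on $\delta$, $M$, $n$) so that truncating the dyadic expansions of $X$, $Y$, $Z$ at the $N$th bit changes each coordinate by less than $\epsilon/3$, say, with probability at least $1 - \delta/2$ over the whole sample; since the support is bounded (within an $\ell_\infty$ ball of radius $M$, finite or not, we can first condition on a large enough compact set at cost $\delta/4$), such an $N$ exists. Write $(\checknew{X},\checknew{Y},\checknew{Z})$ for the truncated triple. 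The key deterministic step is the definition of the new $Z$-coordinate: I interleave or append the (finitely many) bits of $\checknew{X}$ after the bits of $\checknew{Z}$, i.e.\ $\checknew{Z}' = \checknew{Z} + 2^{-N-1}\cdot(\text{bits of }\checknew{X}\text{ rescaled})$, pushed far enough down the expansion that $|\checknew{Z}' - \checknew{Z}| < \epsilon/3$, while $\checknew{X}$ is a deterministic function $\phi$ of $\checknew{Z}'$. Then $\checknew{X}\independent \checknew{Y}\mid \checknew{Z}'$ holds trivially because $\checknew{X}$ is $\sigma(\checknew{Z}')$-measurable.

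\textbf{Achieving absolute continuity.} The triple $(\checknew{X},\checknew{Y},\checknew{Z}')$ is purely atomic, so next I would add independent continuous noise: set $\tilde X = \checknew{X} + \epsilon' W_X$, $\tilde Y = \checknew{Y} + \epsilon' W_Y$ and, crucially, $\tilde Z = \checknew{Z}' + \epsilon' W_Z$ where $W_X, W_Y, W_Z$ are independent of everything and of each other, with bounded densities (e.g.\ uniform on a tiny cube), and $\epsilon'$ is taken small enough that (a) the total perturbation keeps us within $\epsilon$ of the original sample on the good event, and (b) the noise in $\tilde Z$ is confined to bit positions strictly below those carrying the encoded $\checknew{X}$, so that $\checknew{X} = \phi(\tilde Z)$ is \emph{still} recoverable from $\tilde Z$ alone (here $\phi$ just reads off the relevant bit block and ignores the low-order noise bits). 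Consequently $\tilde X = \phi(\tilde Z) + \epsilon' W_X$ with $W_X \independent (\tilde Y, \tilde Z)$, which gives $\tilde X \independent \tilde Y \mid \tilde Z$, and the joint law is absolutely continuous with respect to Lebesgue measure since each of $\tilde X, \tilde Y, \tilde Z$ has an absolutely continuous conditional law. Combining the three bad-event bounds (conditioning to a compact, truncation error, noise error) and choosing constants appropriately yields property (i) with probability $> 1-\delta$. Both $\mathcal{P}_{0,M}$ membership (support still within the $M$-ball if $\epsilon'$ is small) and conditional independence are in hand.

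\textbf{The measure-domination property (ii) — the main obstacle.} The density of $(\tilde{\mb X}^{(n)},\tilde{\mb Y}^{(n)},\tilde{\mb Z}^{(n)}, U)$ is $1$ in $u$ times a product over $i$ of a function that is large (of order $\epsilon'^{-s}$ or worse) but is supported on a thin set: it concentrates on the union, over atoms of $(\checknew{X}_i,\checknew{Y}_i,\checknew{Z}_i')$, of small cubes. The naive density bound $L$ therefore blows up as $\epsilon \downarrow 0$, which is why the statement allows $L$ to depend only on $\delta$ and not on $\epsilon$: the fix, described in the proof sketch, is to not commit to one embedding but to consider a large family $\{\phi_m\}_{m=1}^{K}$ of embeddings — e.g.\ by choosing where in the low-order bits to place the $\checknew{X}$-block, or by XOR-ing the $\checknew{X}$-bits with $K$ different fixed bit-masks — whose resulting atom-cube supports are essentially disjoint. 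Averaging the laws $P_m$ over $m$ (or choosing $m$ uniformly at random as part of the construction) produces a mixture whose density is at most $\max_m(\text{pointwise density of }P_m^n)\times K^{-1}$-ish on any fixed region because at most one $P_m$ charges it; quantitatively, one shows that for $K$ large enough (depending on $\delta$ but not $\epsilon$) the mixture density is bounded by $L(\delta)$ on a set of $P$-probability $1$, or that averaging $\pr_{P_m}(B)$ over $m$ is $\le L(\delta)\mu(B)$ and hence some $m$ works. Making ``essentially disjoint supports'' precise, and tracking that the resulting $L$ genuinely depends only on $\delta$ (through the compact-set truncation, hence through how large the encoded bit-block must be, hence through $K$) while remaining uniform in $\epsilon$ and in the Borel set $B$, is the delicate part; everything else is bookkeeping with dyadic expansions and bounded-density noise.
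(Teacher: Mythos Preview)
Your Steps 1 and 2 follow the paper's approach: truncate to a grid, embed $\checknew{X}$ in the low-order bits of the last $Z$-coordinate, and add fine uniform noise so that conditional independence and absolute continuity both hold. The genuine gap is in Step 3. After embedding, each $\tilde V^{(m)}$ has single-observation density of order $K_0 M_3$, where $K_0$ is the number of values the truncated $\checknew{X}$ can take (so $K_0\to\infty$ as $\epsilon\to 0$) and $M_3$ depends only on $\delta$; the $n$-fold density is therefore $(K_0 M_3)^n$. Even granting that your $K$ embeddings have pairwise disjoint single-observation supports (which is not obvious for bit-shifts or XOR masks---XOR by a fixed mask merely permutes the values and leaves the support unchanged), the mixture or pigeonhole over $K$ embeddings buys at most a factor $K^{-1}$, leaving a bound of order $(K_0 M_3)^n/K$. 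Since simple schemes produce at most $O(K_0)$ genuinely disjoint embeddings, this is at best $K_0^{\,n-1}M_3^n$, which diverges for $n\ge 2$. What is actually needed is a reduction of order $K_0^{-n}$.

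The paper achieves this via a more elaborate construction (its Lemma~\ref{lem:hiding2}): it introduces an auxiliary uniform variable $E\in\{0,\dots,K-1\}$ independent of everything (this already reduces the pmf by a factor $K^{-1}$), and then indexes embeddings by the $K!$ permutations $\pi_m$ of $\{0,\dots,K-1\}$, arranging that the support of the $m$th embedding is $\bigcup_{j} G_{j,\pi_m(j)}$ for a fixed family of disjoint cells $\{G_{jk}\}$. A product cell $\prod_{l=1}^n G_{j_l k_l}$ with distinct $j_l$'s then meets exactly $(K-n)!$ of the $K!$ product supports, so pigeonhole gives an $m^*$ with $\mu(B\cap D\cap T_{m^*})\le \tfrac{(K-n)!}{K!}\mu(B)$, and $(K M_3)^n\cdot\tfrac{(K-n)!}{K!}\to M_3^n$ as $K\to\infty$. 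Two further corrections to your sketch: (a) $K$ must be taken large depending on $\epsilon$ and on $\mu(B)$---only $L$ is required to be uniform in these; (b) a mixture $\tfrac{1}{K}\sum_m P_m$ is generally not in $\mathcal{P}_{0,M}$, since mixtures of conditionally independent laws need not be conditionally independent, so you must select a single $m^*$ by pigeonhole rather than average.
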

\begin{proof}
We will first describe the construction of $\tilde{V} := (\tilde{X}, \tilde{Y}, \tilde{Z})$ from $V:=(X, Y, Z)$. The corresponding $n$-sample $\tilde{\mb V} := (\tilde{\mb X}^{(n)}, \tilde{\mb Y}^{(n)}, \tilde{\mb Z}^{(n)})$ will have observation vectors formed in the same way from the corresponding observation vectors in $\mb V$.
The proof proceeds in three steps. We begin by creating a bounded version $\check{V}=(\check{X}, \check{Y}, \check{Z})$ of $V$ supported on a grid $2^{-r}\mathbb{Z}$,
for which we can control an upper bound on the probability mass function. Next, we apply Lemma~\ref{lem:hiding2} to obtain transforms $\checknew{V}^{(1)}, \ldots, \checknew{V}^{(K!)}$ of $\check{V}$ for arbitrarily large $K$ where $\check{X}$ has been `embedded' in the last component. Then we create noisy versions $\{\tilde{V}^{(m)}\}_{m=1}^{K!}$ by adding uniform noise such that truncation of their binary expansions yields the discrete versions $\{\checknew{V}^{(m)}\}_{m=1}^{K!}$. Each of these are potential candidates for the random vector $\tilde{V}$, but we must ensure that the corresponding $n$-fold product obeys (ii). This is problematic as the embedding procedure necessarily creates near-degenerate random vectors that fall within small regions with large probability. To overcome this issue, we employ in the final step, a probabilistic argument that exploits the property, supplied by Lemma~\ref{lem:hiding2}, that the $K!$ embeddings have supports with little overlap.

\emph{Step 1:}
Define $s := d_X + d_Y + d_Z$.
Since $\{(x, y, z) \in \R^s : p_{X,Y,Z}(x, y, z) > t\} =:B_t \downarrow \emptyset$ as $t \uparrow \infty$, there exists $M_1$ such that the event $\Lambda_1 = \{(X, Y, Z) \in B_{M_1}^c\}$ has 
$\prob(\Lambda_1) \geq 1 - \delta/(2n)$. 
Next, let $M_2 < M$ be such that $\pr(\|V\|_\infty > M_2) < \delta /(2n)$, and let $\Lambda_2$ be the event that $\|V\|_\infty \leq M_2$. For later use, we define  the events
\begin{align*} %
\Omega_1 =\{(x_i, y_i, z_i) \in B_{M_1}^c \text{ for all } i=1,\ldots,n\} \quad \text{and} \quad \Omega_2 = \{\|\mb V\|_\infty \leq M_2\}.
\end{align*}
Note that union bounds give $\pr((\Omega_1 \cap \Omega_2)^c) < \delta$.

Let $E^{(1)}$ be uniformly distributed on $[-M_2,M_2]^{s}$. 
Let $r \in \mathbb{N}$ be such that 
$2^{-r} < \min(\epsilon/3, (M - M_2)/3, 1/n)$ 
and define
\[
\check{V} := (\check{X}, \check{Y}, \check{Z}) :=2^{-r}\floor{2^r(V\ind_{\Lambda_1 \cap \Lambda_2}  + E^{(1)}\ind_{(\Lambda_1 \cap \Lambda_2)^c})}.
\]
Here, the floor function is 
applied
componentwise.
Note that $\check{V}$ takes values in a grid $(2^{-r} \mathbb{Z})^s$ and satisfies
\begin{equation} \label{eq:check_V_l_infty_bd}
\|(\check{V} - V) \ind_{\Lambda_1 \cap \Lambda_2}\|_\infty \leq 2^{-r} < \epsilon/3.
\end{equation}
The choice of $r$ ensures that $\check{V} \in(-M', M')^s$ where $M' = M - 2(M-M_2)/3$. Furthermore, the inclusion of the 
$\ind_{\Lambda_1 \cap \Lambda_2}$ 
term and $E^{(1)}$ ensures that the probability it takes any given value is bounded above by $M_3 2^{-sr}$ where
$M_3:= \max(M_1, (M_2/2)^{-s})$ is independent of $\epsilon$. 
Indeed for any fixed $k \in \mathbb{Z}^s$, writing $A=[k2^{-r},(k+1)2^{-r})$ we have 
\[
\prob(V \in A| \Lambda_1 \cap \Lambda_2) \leq M_12^{-rs} \;\;\text{ and }\;\;\prob(E^{(1)} \in A | (\Lambda_1 \cap \Lambda_2)^c) = 2^{-rs}/(2M_2)^s.
\]
As $\prob(\check{V} = k 2^{-r})$ is a convex combination of these probabilities, it must be at most their maximum.

\emph{Step 2:}
We can now apply Lemma~\ref{lem:hiding2} with $W=(\check{Y}, \check{Z})$ and $N=\check{X}$. This gives us $K!$ 
random vectors $\checknew{V}^{(1)},\ldots, \checknew{V}^{(K!)}$ where $K > 2^r>n$; 
for each $m=1,\ldots,K!$, 
$\checknew{V}^{(m)} = (\checknew{X}^{(m)}, \checknew{Y}^{(m)}, \checknew{Z}^{(m)})$ satisfies
\begin{enumerate}[(a)]
\item $\pr(|\checknew{V}^{(m)}_s - \check{V}_s| \leq 2^{-r}) = 1$ and $\checknew{V}^{(m)}_j = \check{V}_j$ for $j \leq s-1$;
\item $\checknew{X}^{(m)}$ may be recovered from $\checknew{Z}^{(m)}$ via $\checknew{X}^{(m)} = \checknew{g}_m(\checknew{Z}^{(m)})$ for some function $\checknew{g}_m$;
\item $\checknew{V}^{(m)}_s$ takes values in $K^{-2}2^{-r}\mathbb{Z}$ and the probability it takes any given value is bounded above by $2^{-sr}K^{-1} M_3$;
\end{enumerate}
and, additionally,
\begin{enumerate}[(a)]
\item[(d)] 
the supports $\checknew{S}_1,\ldots,\checknew{S}_{K!}$ 
of $\checknew{V}^{(1)}, \ldots, \checknew{V}^{(K!)}$
obey the following structure:
there exists a collection of  $K^2$ disjoint sets $\{\checknew{G}_{jk}\}_{j,k=1}^K$ and an enumeration $\pi_1, \ldots, \pi_{K!}$ of the permutations of $\{1,\ldots, K\}$ \\
such that 
$\checknew{S}_{m} = \cup_{k=1}^K \checknew{G}_{k\pi_m(k)}$ 
and $\pr(\checknew{V}^{(m)} \in \checknew{G}_{j \pi_m(j)}) = K^{-1}$ for all $m=1,\ldots,K!$ and $j=1,\ldots,K$. 
\end{enumerate}

We now create a noisy version of the $\checknew{V}^{(m)}$ that obeys similar properties to the above, but is absolutely continuous with respect to Lebesgue measure. To this end, we introduce $E^{(2)}=(E_X, E_Y, E_Z) \in (0,1)^s$ with independent $U(0,1)$ components. Then let $\tilde{V}^{(m)} \in \R^s$ be defined by
\[
\tilde{V}^{(m)}_j= \begin{cases}
K^{-2} 2^{-r} E^{(2)}_s + \checknew{V}^{(m)}_s \; \text{ for } j=s\\
2^{-r} E^{(2)}_j + \checknew{V}^{(m)}_j \; \text{ otherwise.}
\end{cases}
\]
This obeys
\begin{enumerate}[(a')]
\item $\pr(\|\tilde{V}^{(m)} - \checknew{V}^{(m)}\|_\infty \leq 2^{-r}) = 1$;
\item 
$\checknew{X}^{(m)}$ may be recovered from $\tilde{Z}^{(m)}$ 
via $\tilde{X}^{(m)} = g_k(\tilde{Z}^{(m)})$ 
for some function $g_m$, which depends on $K$ and $r$;
\item the density of $\tilde{V}^{(m)}$ with respect to Lebesgue measure is bounded above by $KM_3$
(indeed, using (c), we have that it is bounded by 
$2^{rs} K^2 \cdot 2^{-sr}K^{-1} M_3 = KM_3$);
\item the supports $S_1,\ldots,S_{K!}$ of the $\{\tilde{V}^{(m)}\}_{m=1}^{K!}$ obey property (d) with the disjoint sets $\checknew{G}_{jk}$ above replaced by the Minkowski sum
$G_{jk}=\checknew{G}_{jk} + 2^{-r}((0,1)^{s-1} \times (0, K^{-2}))$.
\end{enumerate}

Note that (b') holds as 
we can 
first construct 
$\checknew{Z}^{(m)}$
from
$\tilde{Z}^{(m)}$
and then apply (b).
The former is done 
by removing the additive noise component by
truncating the binary expansion appropriately:
$\checknew{Z}^{(m)} := K^{-2} 2^{-r} \floor{K^2 2^r \tilde{Z}^{(m)}}$.
A consequence of this property is that decomposing $(\tilde{X}^{(m)}, \tilde{Y}^{(m)}, \tilde{Z}^{(m)})=\tilde{V}^{(m)}$, we have $\tilde{X}^{(m)} \independent \tilde{Y}^{(m)} \given \tilde{Z}^{(m)}$. To see this we argue as follows. Let us write $p_A$ and $p_{A |B}$ for the densities of $A$ and $A$ given $B$ respectively when $A$ and $B$ are random vectors. Suppressing dependence on $m$ temporarily, we have that for any $\tilde{z}$ with $p_{\tilde{Z}}(\tilde{z}) > 0$,
\begin{align*}
p_{\tilde{X},\tilde{Y},\tilde{Z}}(\tilde{x},\tilde{y} | \tilde{z}) 
&= p_{E_X, \tilde{Y}|\tilde{Z}}(\tilde{x} - g(\tilde{z}),\tilde{y} | \tilde{z}) 
= p_{E_X}(\tilde{x} - g(\tilde{z})) \, p_{\tilde{Y}|\tilde{Z}}(\tilde{y} | \tilde{z}) \\
&= p_{\tilde{X} | \tilde{Z}}(\tilde{x} |\tilde{z}) \, p_{\tilde{Y}|\tilde{Z}}(\tilde{y} | \tilde{z}),
\end{align*}
so $\tilde{X} \independent \tilde{Y} \given \tilde{Z}$.

Property (d') follows as the support of each $\checknew{V}^{(m)}$ is contained in $2^{-r}(\mathbb{Z}^{s-1} \times K^{-2}\mathbb{Z})$ (see (a) and (c)).

From (a'), by the triangle inequality we have that
\begin{equation} \label{eq:V_l_inf_bd}
\pr(\|(\tilde{V}^{(m)} - V)\ind_{\{\Lambda_1 \cap \Lambda_2\}}\|_\infty \leq \epsilon) = 1
\end{equation}
and $\tilde{V}^{(m)} \in (-M, M)^s$.
Let $\{\tilde{\mb V}^{(m)}\}_{m=1}^{K!}$ be the corresponding $n$-sample versions of $\{\tilde{V}^{(m)}\}_{m=1}^{K!}$. Then for any $m$, \eqref{eq:V_l_inf_bd} gives 
$\pr(\|(\tilde{\mb V}^{(m)} - \mb V)\ind_{\Omega_1 \cap \Omega_2}\|_\infty \leq \epsilon) = 1$. 
Thus $\pr(\|\tilde{\mb V}^{(m)} - \mb V\|_\infty \leq \epsilon) > 1-\delta$.
We see that any $\tilde{\mb V}^{(m)}$ satisfies all requirements of the result except potentially (ii).

\emph{Step 3:}
In order to pick an $m$ for which (ii) is satisfied, we use the so-called probabilistic method. First note we may assume $0<\mu(B) <\infty$ or otherwise any $m$ will do.
Define $T_m := S_m^n \times [0,1]$ to be the support set of $(\tilde{\mb V}^{(m)}, U)$ where $U \sim U[0,1]$ independently of $\{\tilde{\mb V}^{(m)}\}_{m=1}^{K!}$.

For $j \in \{1, \ldots, K\}$
let $G_j := \cup_{k=1}^K G_{jk}$. 
Let $\mathcal{J}$ be the set of $n$-tuples $(j_1,\ldots,j_n)$ of distinct elements of $\{1,\ldots,K\}$. Now define
\begin{equation}\label{eq:defineC}
C := \bigcup_{(j_1,\ldots,j_n) \in \mathcal{J}} \prod_{l=1}^n G_{j_l}
\end{equation}
and let $D = C \times [0,1]$. 
Fix $(j_l)_{l=1}^n \in \mathcal{J}$ and $(k_l)_{l=1}^n \in \{1,\ldots,K\}^n$, and set $G:=\prod_{l=1}^n G_{j_l k_l}$. Then $G$ has non-empty intersection with a given $S_m^n$ if and only if $k_l = \pi_m(j_l)$ for all $l$. Thus if $(k_l)_{l=1}^n \notin \mathcal{J}$, $G$ is disjoint from all $S_m^n$. 
On the other hand if $(k_l)_{l=1}^n \in \mathcal{J}$, the number of $S_m^n$ that intersect $G$ is $(K-n)!$, the number of permutations of $\{1,\ldots,K\}$ whose outputs are fixed at $n$ points.
We therefore have that all but at most $(K-n)!$ of the support sets $T_m$ are disjoint from $G \times [0, 1]$, whence
\[
\sum_{m=1}^{K!} \mu\left\{(G \times [0,1]) \cap B \cap T_m\right\} \leq (K-n)! \mu\left\{(G\times [0,1]) \cap B \right\}.
\]
Now the set $C$ is the disjoint union of all sets $\prod_{l=1}^n G_{j_l k_l}$ with $(j_l)_{l=1}^n \in \mathcal{J}$ and $(k_l)_{l=1}^n \in \{1,\ldots,K\}^n$. Thus summing over all such sets we obtain
\[
\sum_{m=1}^{K!} \mu(D \cap B \cap T_m) \leq (K-n)!\mu(D \cap B) \leq (K-n)! \mu(B).
\]
This gives that there exists at least one $m=m^*$ with
\begin{equation*}
\mu(B \cap D \cap T_{m^*}) \leq \frac{(K-n)!}{K!} \mu(B).
\end{equation*}

Next, observe that the number of cells $\prod_{l=1}^n G_{k_l}$ where at least two of $k_1,\ldots,k_n$ are the same is $K^n-K(K-1)\cdots(K-n+1)$. 
As 
$\pr(\tilde{V}^{(m)} \in G_j)=K^{-1}$ for all $j$, using~\eqref{eq:defineC} we have
\begin{align*}
\pr((\tilde{\mb V}^{(m)},U) \notin D) = K^{-n} \{K^n-K(K-1)\cdots(K-n+1)\} = O(K^{-1}).
\end{align*}
for every $m$.
Putting things together, we have that there must exist an $m^*$ with
\begin{align*}
\pr((\tilde{\mb V}^{(m^*)}, U) \in B) &\leq \pr((\tilde{\mb V}^{(m^*)}, U) \in B \cap D) + \pr ((\tilde{\mb V}^{(m^*)}, U) \notin D) \\
&\leq K^n M_3^n \frac{(K-n)!}{K!} \mu(B)  + O(K^{-1}) \leq 2 M_3^n \mu(B)
\end{align*}
for $K$ sufficiently large, which can be arranged by taking $r$ sufficiently large.
\end{proof}

\begin{lemma}\label{lem:hiding2}
Let $W \in \R^l$, $N \in \R^d$ be random vectors. 
Suppose that $N$ is bounded and 
there is some $r \in \mathbb{N}$
such that both $W$ and $N$ have components taking values in the grid $2^{-r} \mathbb{Z}$. 
Suppose further that the probability that $(N, W)$ takes any particular value is bounded by $2^{-(m+d)r} M$ for some $M>0$.
Then there exists a 
$K \in \mathbb{N}$ with $K > 2^r$ and a
set of $K!$ functions $\{f_1,\ldots,f_{K!}\}$ where
\[
(W_l, N) \; \mapsto \; f_m(W_l, N) =: \checknew{W}_l^{(m)} \in \R
\]
such that for each $m=1,\ldots,K!$,
\begin{enumerate}[(i)]
\item $\prob(|W_l - \checknew{W}_l^{(m)} | \leq 2^{-r}) = 1$;
\item 
there is some function $g_m : \R \to \R^d$
such that
$N = g_m(\checknew{W}_l^{(m)})$;
\item 
$\checknew{W}_l^{(m)}$
has components taking values in a grid $K^{-2} 2^{-r} \mathbb{Z}$.
\end{enumerate}
Moreover, defining $\checknew{W}^{(m)} = (W_1, \ldots, W_{l-1}, \checknew{W}_l^{(m)})$,
\begin{enumerate}[(i)]
\item[(iv)] the probability that $(N, \checknew{W}^{(m)})$ takes any value is bounded above by $K^{-1}2^{-(m+d)r} M$;
\item[(v)] the supports $S_1,\ldots,S_{K!}$ of $(N,\checknew{W}^{(1)}), \ldots, (N,\checknew{W}^{(K!)})$ obey the following structure: there exists $K^2$ disjoint sets $\{G_{jk}\}_{j,k=1}^K$ and an enumeration $\pi_1,\ldots,\pi_{K!}$ of the permutations of $\{1,\ldots,K\}$ such that 
for all $m$, 
$S_m= \cup_{k=1}^K G_{k\pi_m(k)}$, and 
for all 
$m$ and $k$, 
$\pr((N, \checknew{W}^{(m)}) \in G_{k\pi_m(k)}) = K^{-1}$.
\end{enumerate}
\end{lemma}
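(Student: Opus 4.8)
The plan is to \emph{hide} the (finitely many) possible values of the bounded lattice vector $N$ inside the far tail of the binary expansion of the last coordinate $W_l$, and to do so in $K!$ almost-disjoint ways indexed by the permutations of $\{1,\dots,K\}$, so that the parent Lemma~\ref{lem:cond_ind_approx} is free to pick a convenient one. I would organise this in three steps.

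\emph{Step 1: a randomised refinement of $N$.} Since $N$ is bounded and lives on $2^{-r}\mathbb{Z}^d$, it takes only finitely many values, say $K_0$ of them. Choose $K \in \mathbb{N}$ with $K > 2^r$ and $K$ large (in the application $r$, hence $K$, may be taken as large as needed). Using an auxiliary $U(0,1)$ variable, I would build a label $J \in \{1,\dots,K\}$ and a decoding map $h$ with $N = h(J, W_l)$ and with $J$ marginally uniform, $\pr(J = j) = K^{-1}$ for every $j$: concretely one partitions $\{1,\dots,K\}$ fibrewise (over the values $w$ of $W_l$) into blocks indexed by the values of $N$, and spreads the conditional law of $N$ given $W_l$ across its block via a quantile coupling, choosing the block sizes so that the mass of each $\{J = j\}$ is exactly $K^{-1}$. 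The sole role of the extra randomness is to split atoms so that these $K$ bins have equal mass.

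\emph{Step 2: the $K!$ embeddings.} List the permutations of $\{1,\dots,K\}$ as $\pi_1,\dots,\pi_{K!}$ and, for each $m$, encode the pair $(J,\pi_m(J))$ into the last two base-$K$ digits below the resolution $2^{-r}$: set
\[
\checknew{W}_l^{(m)} := W_l + \big(J + K\,\pi_m(J)\big)\,K^{-2}2^{-r}, \qquad \checknew{W}^{(m)} := (W_1,\dots,W_{l-1},\checknew{W}_l^{(m)}).
\]
Since $J + K\pi_m(J) \le K^2$, the perturbation is at most $2^{-r}$, giving (i); $\checknew{W}_l^{(m)}$ lies on $K^{-2}2^{-r}\mathbb{Z}$, giving (iii); and because the offset is $<1$ in units of $2^{-r}$ we may read off $W_l = 2^{-r}\lfloor 2^r \checknew{W}_l^{(m)}\rfloor$, then the offset, hence $J$ and $\pi_m(J)$, and finally $N = h(J, W_l)$, giving (ii) with $g_m$ this composite. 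For (iv), observe that $(N,\checknew{W}^{(m)})$ carries exactly the information in $(W, J)$, so its atoms have mass $\pr(W = w, J = j) = \pr(W = w)\,K^{-1}$, which the choice of $K$ drives below the required bound. Finally, let $G_{jk}$ be the set of values of $(N,\checknew{W}^{(m)})$ whose decoded offset has low digit $j$ and high digit $k$; these $K^2$ sets are disjoint and do not depend on $m$, $S_m = \bigcup_{k=1}^K G_{k\,\pi_m(k)}$ by construction, and $\pr\big((N,\checknew{W}^{(m)}) \in G_{j\,\pi_m(j)}\big) = \pr(J = j) = K^{-1}$, which is (v).

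\emph{Main obstacle.} The crux is that one cannot simultaneously encode $N$ \emph{deterministically} from the scalar $\checknew{W}_l^{(m)}$ and split the induced discrete law into $K$ bins of mass \emph{exactly} $K^{-1}$, since the atoms of an arbitrary discrete distribution are not commensurable with $1/K$. This is precisely what forces the fibrewise randomised refinement $N \mapsto J$ of Step 1, and it is also why $K$ must be taken large, both relative to $K_0$ and relative to the atom bound. Everything else is routine: (i)--(iii) are arithmetic of the embedding, (iv) is the atom count above, and (v) is exactly the permutation-indexed overlap pattern that the counting argument in the proof of Lemma~\ref{lem:cond_ind_approx} exploits.
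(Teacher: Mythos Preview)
Your overall architecture---encode information into the far tail of the binary expansion of $W_l$ and index the $K!$ embeddings by permutations---matches the paper's, and Steps~2--3 would go through once Step~1 is in place. The gap is Step~1 itself.

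You ask for a label $J\in\{1,\dots,K\}$ that is \emph{exactly} marginally uniform while simultaneously satisfying $N=h(J,W_l)$ for a deterministic decoder $h$. You correctly flag the obstruction (incommensurable atoms) in your ``Main obstacle'' paragraph, but the proposed remedy---auxiliary $U(0,1)$ randomness spread across fibrewise blocks---does not remove it. Take $W_l$ deterministic (nothing in the lemma rules this out) and $\pr(N=n_1)=p$ irrational. Any $J$ with $N=h(J)$ forces $\{J\in h^{-1}(n_1)\}=\{N=n_1\}$ as events, so $\pr(J\in h^{-1}(n_1))=p$; yet for a uniform $J$ this probability would equal $|h^{-1}(n_1)|/K\in\Q$. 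The extra randomisation can redistribute mass only \emph{within} each $N$-block, never across blocks, so the block masses remain the (possibly irrational) atoms of $N$; taking $K$ large does not help. (Your argument for (iv), which asserts $\pr(W=w,\,J=j)=\pr(W=w)\,K^{-1}$, tacitly assumes $J\independent W$ and would fail for a related reason even if Step~1 were salvageable.)

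The paper sidesteps this by not asking the uniform label to determine $N$ on its own. It first packs $N$ deterministically into a single integer $\checknew{N}\in\{0,\dots,K-1\}$ (concatenating binary expansions of the components, with $K=2^{dt}$), then draws an \emph{independent} uniform $E\in\{0,\dots,K-1\}$ and sets the offset to $\checknew{N}_m:=\big((\checknew{N}+E)\bmod K\big)+K\,\pi_m(E)$. Property~(ii) still holds because the \emph{full} offset recovers $N$: the high digit gives $\pi_m(E)$ and hence $E$; the low digit minus $E$ modulo $K$ then gives $\checknew{N}$, hence $N$. The sets $G_{jk}$ are the supports of $(N,\tilde W_{j,k})$ with shift $j$ and high digit $k$; they are disjoint because $(j,k)$ is recoverable from $(N,\text{offset})$, and $(N,\checknew{W}^{(m)})\in G_{j\,\pi_m(j)}$ precisely when $E=j$, so (v) follows immediately from $\pr(E=j)=K^{-1}$. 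In short: let the uniform randomness sit \emph{alongside} $\checknew N$ in the two-digit encoding rather than attempt to refine $N$ into a uniform label.
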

\begin{proof}
As $N$ is bounded, by replacing $g_m(\cdot)$ by $g_m(\cdot) + v$ where $v \in \R^d$ is appropriately chosen with components in $2^{-r} \mathbb{Z}$, we may assume that $N$ has non-negative components. Let $t \in \mathbb{N}$ be such that $2^{t} > 2^r \max(1,\|N\|_\infty)$.
We shall prove the result with $K=2^{dt}$.

Define the random variable $\checknew{N}$
by
\[
\checknew{N} = 2^{r} \sum_{j=0}^{d-1} 2^{tj}N_{j+1}.
\]
This is a concatenation of the binary expansions 
of $2^r N_j \in \{0,1, 2, \ldots,2^t-1\}$
for $j=1,\ldots,d$. 
Observe that $\checknew{N} \in \{0, 1, \ldots, K-1\}$ and 
that
$N_j$ may be recovered from $\checknew{N}$ by examining its binary expansion. Indeed, $2^r N_j$ is the residue modulo $2^{t}$ of $\floor{\checknew{N} / 2^{t(j-1)}}$.

For $j=\{0,1, \ldots, K-1\}$, let $\tilde{N}_j$ be the residue
of $\checknew{N} + j$ modulo $K$, 
so $\tilde{N}_j \in \{0, 1, \ldots, K-1\}$. Also, for $k=\{0,1, \ldots, K-1\}$ let $\tilde{N}_{j,k} = \tilde{N}_j + Kk$. Note that $\tilde{N}_{j,k}$ takes values in $\{0, \ldots, K^2-1\}$.
Let the random variable $E$ be uniformly distributed on $\{0,1,\ldots,K-1\}$ independently of all other quantities.
Now let $\pi_1,\ldots,\pi_{K!}$ be an enumeration of the permutations of $\{0,\ldots,K-1\}$. Finally, let $\checknew{N}_m = \tilde{N}_{E,\pi_m(E)}$ for $m=1,\ldots,K!$.

One important feature of this construction is that we can recover $E$ from $\checknew{N}_m$
(and $m$)
via 
$\pi_m(E)= \floor{\checknew{N}_m/K}$, 
and thereby determine 
$E$,
which then reveals $\checknew{N}$ 
and each of the individual $N_j$. 
In summary, this gives us $K!$ different embeddings of the vector $N$ into a single random variable.

We may now define $f_m$ by
\[
f_m(W_l, N) = W_l + 2^{-r} K^{-2} \checknew{N}_m.
\]
It is easy to see that (i) and (iii) are satisfied. To deduce (ii), observe that we may recover $W_l$ via $2^{-r}\floor{2^r f_m(W_l, N)}=:c_m(f_m(W_l, N))$, and thus also determine $\checknew{N}_m$ which, as discussed above, also gives us $N$ and $E$. Let $g_m$ and $h_m$ be the functions that when applied to $f_m(W_l,N)$, yield $N$ and $E$ respectively. Let us introduce the notation that for a vector $v \in \R^s$, $v_{-j} \in \R^{s-1}$ for $j=1,\ldots,s$ is the subvector of $v$ where the $j$th component is omitted. Then we have
\begin{align*}
&\pr(N=n, \checknew{W}^{(m)} = \checknew{w}) \\
&\qquad = \pr(W_{-l} =\checknew{w}_{-l}, W_l=c_m(\checknew{w}_l), N=n, E=h_m(\checknew{w}_l))\ind_{\{g_m(\checknew{w}_l)=n\}} \\
&\qquad = K^{-1} \pr(W_{-l} =\checknew{w}_{-l}, W_l=c_m(\checknew{w}_l), N=n)\ind_{\{g_m(\checknew{w}_l)=n\}} \\
&\qquad \leq K^{-1} 2^{-(l+d)r}M,
\end{align*}
using the independence of $E$ in the second line above. This gives (iv).

Note that the supports of the $(N, \tilde{N}_{j,k})$ are all disjoint as $(j,k)$ can be recovered from $(N, \tilde{N}_{j,k})$. For $j,k=0,1,\ldots,K-1$ let $G_{jk}$ be the support set of
\[
(N, \tilde{W}_{j,k}) := (N, W_1, \ldots, W_{l-1}, W_l + 2^{-r}K^{-2} \tilde{N}_{j,k}).
\]
From the above, we see that the $\{G_{jk}\}_{j,k=0}^{K-1}$ are all disjoint. Property (v) follows from noting that 
$\checknew{W}^{(m)} = \tilde{W}_{E,\pi_m(E)}$.
\end{proof}

The following well-known result appears for example in \citet[][Theorem~2.19]{weaver2013measure}.
\begin{lemma} \label{lem:borel_approx}
Given any bounded Borel subset $B$ of $\R^d$ and any $\epsilon > 0$, there exists a finite union of boxes of the form
\[
B^{\sharp} = \bigcup_{i=1}^N \prod_{k=1}^d (a_{i,k}, b_{i,k}]
\]
such that $\mu(B \triangle B^{\sharp}) \leq \epsilon$, where $\mu$ denotes Lebesgue measure and $\triangle$ denotes the symmetric difference operator.
\end{lemma}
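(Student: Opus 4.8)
The plan is to invoke the outer regularity of Lebesgue measure together with the dyadic decomposition of open sets. Since $B$ is bounded, first fix a large half-open box $Q_0 = \prod_{k=1}^d (c_k, d_k]$ containing $B$. By outer regularity of $\mu$ there is an open set $U$ with $B \subseteq U$ and $\mu(U \setminus B) < \epsilon/2$; replacing $U$ by $U \cap Q_0^{\circ}$ (the interior of $Q_0$) if necessary, we may assume $U$ is bounded, so that $\mu(U) < \infty$, which we shall use below.

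Next I would use the standard fact that every open subset of $\R^d$ is a countable union of pairwise disjoint half-open dyadic cubes: at each scale $m = 0, 1, 2, \ldots$ select all maximal dyadic cubes of side $2^{-m}$, of the form $\prod_{k=1}^d (a_k, a_k + 2^{-m}]$ with $a_k \in 2^{-m}\Z$, that are contained in $U$ but not contained in any cube already chosen at a coarser scale. This yields $U = \bigcup_{i=1}^\infty Q_i$ with the $Q_i$ disjoint boxes of exactly the form appearing in the statement. Since $\mu(U) = \sum_{i=1}^\infty \mu(Q_i) < \infty$, choose $N$ with $\sum_{i > N} \mu(Q_i) < \epsilon/2$ and set $B^\sharp = \bigcup_{i=1}^N Q_i$, a finite union of boxes.

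Finally I would bound the symmetric difference. We have $B^\sharp \subseteq U$, so $B^\sharp \setminus B \subseteq U \setminus B$ and hence $\mu(B^\sharp \setminus B) < \epsilon/2$. Also $U = B^\sharp \cup \bigcup_{i > N} Q_i$ with the union disjoint, so $B \setminus B^\sharp \subseteq U \setminus B^\sharp = \bigcup_{i > N} Q_i$, giving $\mu(B \setminus B^\sharp) < \epsilon/2$. Adding these, $\mu(B \triangle B^\sharp) < \epsilon$, as required. There is no genuine obstacle here: the only points needing a little care are ensuring $U$ has finite measure (handled by the bounding box $Q_0$) so that the dyadic tail can be made small, and checking that the dyadic cubes can be taken in the half-open form stipulated, which is automatic from the construction. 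Alternatively, since the statement is standard, one may simply cite \citet[][Theorem~2.19]{weaver2013measure}.
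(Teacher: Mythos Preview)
Your proof is correct, and in fact the paper does not supply a proof of this lemma at all: it simply states that the result is well known and cites \citet[][Theorem~2.19]{weaver2013measure}, which is exactly the alternative you mention in your last sentence. Your outer-regularity plus dyadic-cube argument is the standard one and goes through; the only point worth a tiny remark is that when you replace $U$ by $U \cap Q_0^{\circ}$ you should choose $Q_0$ large enough that $B$ lies in the \emph{interior} of $Q_0$ (always possible since $B$ is bounded), so that the inclusion $B \subseteq U$ is preserved.
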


\section{KL-Separation of null and alternative} \label{SEC:KL}
For distributions $P_1, P_2 \in \mathcal{E}_0$, let $\mathrm{KL}(P_1||P_2)$ denote the KL divergence from $P_2$ to $P_1$, which we define to be $+\infty$ when $P_1$ is not absolutely continuous with respect to $P_2$.
The following result, which is proved in the supplementary material,
shows that it is possible to choose $Q \in \mathcal{Q}_0$ that is arbitrarily far away from $\mathcal{P}_0$ (by picking $\sigma^2>0$ to be sufficiently small).
\begin{proposition} \label{prop:KL}
Consider the distribution $Q$ over the triple $(X,Y,Z)\in \R^3$ defined in the following way:  
$Y = X + N$ with 
$X \sim \mathcal{N}(0,1)$, 
$N \sim \mathcal{N}(0,\sigma^2)$ and $X \independent N$. The variable $Z\sim \mathcal{N}(0,1)$ is independent of $(X,Y)$.
Thus $X \notindependent Y \given Z$, i.e., $Q \in \mathcal{Q}_0$, and we have
\[
\inf_{P \in \mathcal{P}_0} \mathrm{KL}(Q||P) 
= \frac{1}{2} \log \bigg(\frac{1+\sigma^2}{\sigma^2}\bigg)
= \inf_{P \in \mathcal{P}_0} \mathrm{KL}(P||Q).
\]
\end{proposition}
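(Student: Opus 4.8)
The plan is to treat the two equalities separately, in each case pairing a lower bound valid for every $P \in \mathcal{P}_0$ with an explicit minimiser. Throughout I would use that $q(x,y,z) = q_Z(z)\, q_{XY}(x,y)$, since $Z \independent (X,Y)$ under $Q$, and that $p(x,y,z) = p_Z(z)\, p(x \given z)\, p(y \given z)$, since $X \independent Y \given Z$ under $P$. Note $Q_{XY}$ is bivariate Gaussian with squared correlation $\rho^2 = 1/(1+\sigma^2)$, so its mutual information is $I_Q(X;Y) = -\tfrac12\log(1-\rho^2) = \tfrac12\log\{(1+\sigma^2)/\sigma^2\}$, which is the claimed value in both directions.

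\textbf{Forward direction.} The chain rule for KL divergence combined with the two factorisations gives $\mathrm{KL}(Q||P) = \mathrm{KL}(Q_Z||P_Z) + \int \mathrm{KL}(Q_{XY} \,||\, P_{X\given z} \otimes P_{Y\given z})\, dQ_Z(z)$. Both terms are nonnegative, and for any product measure $\mu \otimes \nu$ one has $\mathrm{KL}(Q_{XY}||\mu\otimes\nu) \geq \mathrm{KL}(Q_{XY}||Q_X \otimes Q_Y) = I_Q(X;Y)$, the minimum being attained at the marginals of $Q_{XY}$. Hence $\mathrm{KL}(Q||P) \geq I_Q(X;Y)$ for all $P \in \mathcal{P}_0$, and the fully independent distribution $P = Q_Z \otimes Q_X \otimes Q_Y \in \mathcal{P}_0$ attains equality.

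\textbf{Reverse direction.} Here the lower bound is the substantive step. Assume $\mathrm{KL}(P||Q) < \infty$ (otherwise there is nothing to prove; by a Donsker--Varadhan argument using that $Q$ has Gaussian marginals, finiteness forces $\E_P(X^2+Y^2) < \infty$ and makes the differential entropies below finite). Discarding $\mathrm{KL}(P_Z||Q_Z) \geq 0$, the two factorisations give $\mathrm{KL}(P||Q) \geq \E_P \log\{p(X\given Z)p(Y\given Z)/q_{XY}(X,Y)\} = -h(P_{X\given Z}) - h(P_{Y\given Z}) - \E_P\log q_{XY}(X,Y)$, where $h(\,\cdot\given Z)$ denotes conditional differential entropy. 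Since $-\log q_{XY}(x,y)$ equals the quadratic $\log(2\pi) + \tfrac12\log\sigma^2 + (x-y)^2/(2\sigma^2) + x^2/2$, and since $X \independent Y \given Z$ under $P$ gives $\E_P(X-Y)^2 \geq \E_P\{\var_P(X\given Z) + \var_P(Y\given Z)\}$ and $\E_P X^2 \geq \E_P \var_P(X \given Z)$, writing $v_X := \E_P \var_P(X\given Z)$ and $v_Y := \E_P \var_P(Y \given Z)$ (both finite, and both strictly positive because $P \in \mathcal{E}_0$) we get $-\E_P\log q_{XY}(X,Y) \geq \log(2\pi) + \tfrac12\log\sigma^2 + (v_X+v_Y)/(2\sigma^2) + v_X/2$. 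Bounding the conditional entropies by $h(P_{X\given Z}) \leq \tfrac12\log(2\pi e\, v_X)$ and $h(P_{Y\given Z}) \leq \tfrac12\log(2\pi e\, v_Y)$ --- Gaussian maximality of differential entropy applied for each value of $Z$, followed by Jensen's inequality for the concave map $\log$ --- and collecting terms yields $\mathrm{KL}(P||Q) \geq -1 + \tfrac12\log\sigma^2 + \big(-\tfrac12\log v_X + \tfrac{1+\sigma^2}{2\sigma^2} v_X\big) + \big(-\tfrac12\log v_Y + \tfrac1{2\sigma^2} v_Y\big)$. Minimising the two brackets over $v_X, v_Y > 0$ (optima at $v_X = \sigma^2/(1+\sigma^2)$ and $v_Y = \sigma^2$) leaves exactly $\tfrac12\log\{(1+\sigma^2)/\sigma^2\}$. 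For the matching upper bound I would take the Gaussian $P$ with independent coordinates $Z \sim \mathcal{N}(0,1)$, $X \sim \mathcal{N}(0, \sigma^2/(1+\sigma^2))$, $Y \sim \mathcal{N}(0,\sigma^2)$, which lies in $\mathcal{P}_0$ and, by the closed-form Gaussian KL formula, satisfies $\mathrm{KL}(P||Q) = \tfrac12\log\{(1+\sigma^2)/\sigma^2\}$.

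\textbf{Main obstacle.} The forward direction is routine. The work is entirely in the reverse direction: choosing the chain of inequalities --- drop $\mathrm{KL}(P_Z||Q_Z)$, replace the $(X,Y)$-moments by averaged conditional variances using conditional independence, replace the conditional laws by Gaussians via entropy maximality and Jensen --- so that the final two-variable optimisation over $(v_X, v_Y)$ recovers the target constant exactly rather than a strictly weaker bound, together with checking the minor measure-theoretic points needed for the conditional entropies to be well defined and finite.
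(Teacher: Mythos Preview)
Your forward direction coincides with the paper's: both decompose $\mathrm{KL}(Q\|P)$ using the product structure of $P$ and recognise the infimum as the mutual information $I_Q(X;Y)$, attained at $P=Q_Z\otimes Q_X\otimes Q_Y$.

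For the reverse direction the two arguments diverge. The paper fixes $z$ and invokes the variational (mean-field) Bayes fixed-point equations for minimising $\int p_1(x|z)p_2(y|z)\log\{p_1(x|z)p_2(y|z)/q_{XY}(x,y)\}\,dx\,dy$ over product densities, then cites \citet{Bishop2006} to identify the solution as the Gaussians $\mathcal N(0,\sigma^2/(1+\sigma^2))$ and $\mathcal N(0,\sigma^2)$; the final value is read off from the closed-form Gaussian KL. You instead obtain a lower bound by (i) expanding $-\log q_{XY}$ as a quadratic in $(x,y)$, (ii) replacing second moments by averaged conditional variances via $X\independent Y\mid Z$, and (iii) bounding the conditional entropies $h(P_{X|Z}),\,h(P_{Y|Z})$ by their Gaussian maxima together with Jensen. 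This collapses everything to a two-parameter minimisation in $(v_X,v_Y)$ whose optimum is exactly the claimed value, and your matching upper bound uses the very same Gaussian $P$ the paper finds through variational Bayes. Your route is more elementary and self-contained --- it does not rely on knowing that the mean-field fixed point is the global minimiser, nor on an external reference --- while the paper's approach has the conceptual benefit of exhibiting the minimiser directly as the mean-field approximation to $Q_{XY}$. Both arguments share the same mild measure-theoretic caveats (splitting the KL integral into its pieces), which you flag appropriately.
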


\end{cbunit}

\begin{cbunit}
\newpage

\noindent {\Large \bf Supplementary material}\\

This supplementary material contains the proofs of results omitted in the appendices A and B in the main paper.
\section{Proof of additional results in Sections~\ref{SEC:NOFREELUNCH} and \ref{SEC:KL}}

\subsection{Results on the separation between the null and alternative hypotheses}
In this section we provide proofs of Propositions~\ref{prop:separation} and~\ref{prop:KL}.

\subsubsection{Proof of Proposition~\ref{prop:separation}}
The proof of Proposition~\ref{prop:separation} makes frequent use of the following lemma.
\begin{lemma} \label{lem:tv}
Let probability measures $P$ and $Q$ be defined on $[-M, M]$. Then
\[
|\E_P W - \E_Q W| \leq 2M \tv{P - Q}.
\]
\end{lemma}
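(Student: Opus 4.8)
The plan is to exploit the Jordan--Hahn decomposition of the signed measure $\mu := P - Q$ on $[-M,M]$. Write $\mu = \mu^+ - \mu^-$, where $\mu^+$ and $\mu^-$ are mutually singular finite positive measures concentrated on disjoint Borel sets $A$ and $A^c$ respectively. Since both $P$ and $Q$ are probability measures, $\mu([-M,M]) = 0$, hence $\mu^+([-M,M]) = \mu^-([-M,M])$; moreover the supremum defining the total variation distance is attained on the Hahn positive set, so
\[
\tv{P-Q} = \sup_{B \in \mathcal{B}} |P(B) - Q(B)| = \mu^+(A) = \mu^+([-M,M]) = \mu^-([-M,M]).
\]

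First I would record the elementary identity
\[
\E_P W - \E_Q W = \int_{[-M,M]} w \, d\mu(w) = \int_{A} w \, d\mu^+(w) - \int_{A^c} w \, d\mu^-(w),
\]
valid because $W$ is bounded on $[-M,M]$ and hence $\mu$-integrable. Then, using $|w| \leq M$ on the domain of integration,
\[
\left| \int_A w \, d\mu^+(w) \right| \leq M\, \mu^+([-M,M]) = M \tv{P-Q},
\]
and likewise $\bigl|\int_{A^c} w\, d\mu^-(w)\bigr| \leq M\tv{P-Q}$. The triangle inequality then gives $|\E_P W - \E_Q W| \leq 2M\tv{P-Q}$, as claimed.

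An equivalent and arguably slicker route is via a maximal coupling: there exists a pair $(W, W')$ with $W \sim P$, $W' \sim Q$ and $\pr(W \neq W') = \tv{P-Q}$; since $|W - W'| \leq 2M$ almost surely, $|\E_P W - \E_Q W| \leq \E|W - W'| = \E\bigl[|W-W'|\ind_{\{W \neq W'\}}\bigr] \leq 2M \pr(W\neq W')$. There is essentially no real obstacle here; the only point requiring a moment's care is the justification that $\tv{P-Q}$ as defined in Proposition~\ref{prop:separation} (the supremum over Borel sets) coincides with $\mu^+([-M,M])$, i.e.\ that the supremum is attained on the Hahn positive set, which is a standard fact about signed measures.
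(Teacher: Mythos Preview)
Your proof is correct but follows a different route from the paper's. The paper uses the layer-cake representation
\[
\E W = \int_0^{2M} \pr(W + M \geq t)\, dt - M,
\]
subtracts the $P$ and $Q$ versions, and bounds the integrand $|\pr_P(W+M \geq t) - \pr_Q(W+M \geq t)|$ pointwise by $\tv{P-Q}$, then integrates over an interval of length $2M$. Your Hahn--Jordan argument (and the coupling alternative) is equally valid and arguably more transparent: it makes explicit that the factor $2M$ is the diameter of the support, and it carries over verbatim to arbitrary bounded measurable functionals and to measures on general spaces, whereas the tail-integral trick is tied to real-valued $W$. The paper's argument, on the other hand, is slightly lighter in prerequisites, needing only the definition of $\tv{\cdot}$ rather than the Hahn decomposition or the existence of a maximal coupling.
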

\begin{proof}
First note that
\[
\E W = \int_0^{2M} \pr(W +M \geq t) dt - M.
\]
Thus
\begin{align*}
\E_P W - \E_Q W = \int_0^{2M} \pr_P(W +M \geq t)  - \pr_Q(W+M \geq t) dt \leq 2M \tv{P - Q}.
\end{align*}
Repeating the argument with $P$ and $Q$ interchanged gives $\E_Q W - \E_P W \leq 2M \tv{P - Q}$ and hence the result.
\end{proof}

We now turn to the proof of Proposition~\ref{prop:separation}.
By scaling we may assume without loss of generality that $M=1$. We will frequently use the following fact. Let $P_1, P_2$ be two probability laws on $\R^d$ and suppose $U \sim P_1$ and $V \sim P_2$. Then for any measurable function $f:\R^d \to \R^c$, if $\tilde{P}_1$ and $\tilde{P}_2$ are the laws of $f(U)$ and $f(V)$ respectively, $\tv{\tilde{P}_1 - \tilde{P}_2} \leq \tv{P_1 - P_2}$. It thus suffices to consider the case $d_X=d_Y=d_Z=1$.

Let $(X, Y, Z) \in \R^3$ is a random triple be a random triple. We define $Q \in \mathcal{Q}_{0,1}$ in the following way: if $(X, Y, Z) \sim Q$, $Z \independent (X, Y)$, $Z \sim U(-1, 1)$, and $(X, Y)$ is uniformly distributed on $(0, 1)^2 \cup (-1, 0)^2$. We note, for later use, that $\E_Q X = \E_Q Y = 0$ and $\E_Q XY = 1/4$.

Next take $P \in \mathcal{P}_{0,M}$ and let functions $f,g:(-1,1) \to (-1,1)$ be defined by $f(z) = \E_P(X|Z=z)$ and $g(z) = \E_P(Y|Z=z)$. Let $\psi: (-1,1)^3 \to \R$ be given by
\[
\psi(x, y, z) = \{x-f(z)\}\{y-g(z)\}.
\]
Note that $\psi$ as $x - f(z), \, y-g(z) \in (-2, 2)$, $\psi$ takes values in $(-4,4)$. Also $\E_P \psi(X, Y, Z) = 0$ and $\E_Pf(Z) g(Z) = \E_P XY$ as $\E_P (X-f(Z))g(Z) = 0$ and $\E_P f(Z)(Y-g(Z)) = 0$.

Let $\tv{Q - P} = \delta$. Then
\begin{align*}
\E_Q \psi(X, Y, Z) &= \E_Q XY + \E_Q f(Z) g(Z) \\
&\geq 1/4 + \E_P f(Z) g(Z) - 2\delta \\
&= 1/4 + \E_P XY - 2\delta \\
&\geq 1/4 + \E_Q XY - 4\delta = 1/2 - 4\delta,
\end{align*}
where we have used Lemma~\ref{lem:tv} in the second and last lines above. We now apply Lemma~\ref{lem:tv} once more to give
\[
1/2 - 4\delta \leq |\E_Q \psi(X, Y, Z) - \E_P \psi(X, Y, Z)| \leq 8 \delta,
\]
whence $\delta \geq 1/24$.
\subsubsection{Proof of Proposition~\ref{prop:KL}}
\begin{proof}
Let $q_{XYZ}$ be the density of $Q$ and denote by $q_{XY}$ the marginal density of $(X, Y)$ under $Q$ and similarly for $q_X$ etc.
To prove (i), we will consider minimising 
$\mathrm{KL}(Q||P)$ over distributions
$P \in \mathcal{P}_0$. Let $\mathcal{P}'_0$ be the set of densities of distributions in $\mathcal{P}_0$ with respect to Lebesgue measure with the added restriction that for $p \in \mathcal{P}'_0$, with a slight abuse of notation we have $\mathrm{KL}(q_{XYZ}||p) <\infty$. The proof will show that $\mathcal{P}'_0$ is non-empty. Given any $p \in \mathcal{P}'_0$, the conditional independence implies the factorisation 
$
p(x,y,z) = p_1(x|z) p_2(y|z) p_3(z)
$, 
i.e., 
we can minimise over the individual 
(conditional) densities 
$p_j$, $j \in \{1, 2, 3\}$. Adding and subtracting terms that do not depend on $p$, we obtain
\begin{align*}
& \argmin_{P \in \mathcal{P}_0} \mathrm{KL}(Q||P) \\
= &\argmin_{p \in \mathcal{P}'_0} \bigg\{
- \int q_{XYZ}(x,y,z) \log p(x,y,z) \,dx\,dy\,dz
+ \int q_{XYZ}(x,y,z) \log q_{XYZ}(x,y,z) \,dx\,dy\,dz \bigg\}\\
= &\argmin_{p \in \mathcal{P}'_0} \bigg\{
- \int q_{XY}(x,y)q_Z(z) \log p_1(x|z) \,dx \, dy \, dz
- \int q_{XY}(x,y)q_Z(z) \log p_2(y|z) \,dx \, dy \, dz \\
& \qquad \qquad \qquad \qquad - \int q_{XY}(x,y)q_Z(z) \log p_3(z) \,dx \, dy \, dz \bigg\} \\
= &\argmin_{p \in \mathcal{P}'_0} \bigg\{
- \int q_X(x)q_Z(z) \log \{p_1(x|z) q_Z(z)\} \,dx  \, dz
- \int q_Y(y)q_Z(z) \log \{p_2(y|z) q_Z(z)\} \, dy \, dz \\
& \qquad \qquad \qquad \qquad - \int q_Z(z) \log p_3(z) \, dz\bigg\}.
\end{align*}
Note that the fact that $\mathcal{P}_0'$ contains only densities $p$ such that $\mathrm{KL}(q_{XYZ}||p) <\infty$ ensures that all of the integrands above are integrable, which permits the use of the additivity property of integrals.
From Gibbs' inequality, we know the expression in the last display is minimised by
$p_1^*(x|z) = q_X(x)$,
$p_2^*(y|z) = q_Y(y)$, and
$p_3^*(z) = q_Z(z)$.
This implies 
$$
\inf_{P \in \mathcal{P}_0} \mathrm{KL}(Q||P) = \int q_{XY}(x,y) \log \frac{q_{XY}(x,y)}{q_X(x)q_Y(y)} = I_Q(X;Y) = -\frac{1}{2} \log (1-\rho_Q^2),
$$
where $I_Q(X, Y)$ denotes the mutual information between $X$ and $Y$, and $\rho_Q$ is the correlation coefficient of the bivariate Gaussian $(X,Y)$, both under $Q$.

For the second equality sign, 
we now minimise 
$\mathrm{KL}(P||Q)$ with respect to $P$. Let us redefine $\mathcal{P}'_0$ to now be the set of densities of distributions in $\mathcal{P}_0$ with the additional restriction that for $p \in \mathcal{P}'_0$,  we have $\mathrm{KL}(p||q_{XYZ}) <\infty$. For such $p$, we have
\begin{align} \label{eq:varBayes}
\mathrm{KL}(p||q_{XYZ}) 
& = \int p_3(z) \int p_1(x|z) p_2(y|z) \log \frac{p_1(x|z) p_2(y|z)}{q_{XY}(x,y)} \,dx\,dy\,dz
+ \int p_3(z) \log \frac{p_3(z)}{q_Z(z)} \,dz,
\end{align}
again noting that finiteness of $\mathrm{KL}(p||q_{XYZ})$ ensures Fubini's theorem and additivity may be used.
We will first consider, for a fixed $z$, the optimisation over 
$p_1$ and $p_2$.
From variational Bayes methods we know that 
\begin{align*}
p_1^*(x|z) &\propto \exp \bigg(\int_y p_2^*(y|z) \log q_{XYZ}(x,y,z) \,dy\bigg) \propto \exp\bigg(\int_y p_2^*(y|z) \log q_{XY}(x,y) \, dy\bigg) \\
p_2^*(y|z) &\propto \exp\bigg(\int_x p_1^*(x|z) \log q_{XYZ}(x,y,z) \,dx\bigg) \propto \exp\bigg(\int_x p_1^*(x|z) \log q_{XY}(x,y) \,dx\bigg).
\end{align*}
Straightforward calculations (Section~10.1.2 of \citet{Bishop2006}) show that
$p^*_1(x|z) = p^*_1(x)$ and $p^*_2(y|z) = p^*_2(y)$  are Gaussian densities with mean zero and variances
$\Sigma_{11}:=(\Sigma^{-1}_Q)_{11}^{-1} = \sigma^2/(\sigma^2 + 1)$ and
$\Sigma_{22}:=(\Sigma^{-1}_Q)_{22}^{-1} = \sigma^2$, respectively, where $\Sigma_Q$ is the covariance matrix of the bivariate distribution for $(X,Y)$ under $Q$.
It then follows from~\eqref{eq:varBayes} that 
$p_3^*(z) = q_Z(z)$. 
Thus we have,
\begin{align*}
\mathrm{KL}(p^*||q_{XYZ}) 
& = \int p^*(x,y) \log \{p^*(x,y)/q_{XY}(x,y)\} \,dx\,dy\\
 &= \frac{1}{2} \left(\mathrm{tr}(\Sigma_Q^{-1}\Sigma) - 2 + \log \frac{\mathrm{det}\Sigma_Q}{\mathrm{det} \Sigma} \right)
 = 
\frac{1}{2}\log\bigg(\frac{1+\sigma^2}{\sigma^2}\bigg).
\end{align*}
\end{proof}

\section{Proofs of Results in Section~\ref{SEC:GCM}}
In this section, we will use $a \lesssim b$ as shorthand for $a \leq Cb$ for some constant $C\geq 0$, where what $C$ is constant with respect to will be clear from the context.
\subsection{Proof of Theorem~\ref{thm:univariate}}
First observe that by scaling we may assume without loss of generality that $\E_P((\varepsilon_P \xi_P)^2) = 1$ for all $P$.

We begin by proving (i). We shall suppress the dependence on $P$ and $n$ at times to lighten the notation.
Recall the decomposition,
\begin{align}
\tau_N= \frac{1}{\sqrt{n}}\sum_{i=1}^n R_i &= \frac{1}{\sqrt{n}} \sum_{i=1}^n \{f(z_i) - \hat{f}(z_i) + \varepsilon_i\}\{g(z_i) -\hat{g}(z_i) + \xi_i\} \nonumber \\
&= (b + \nu_f + \nu_g) + \frac{1}{\sqrt{n}}\sum_{i=1}^n \varepsilon_i \xi_i, \label{eq:T_num2}
\end{align}
where
\begin{gather*}
b := \frac{1}{\sqrt{n}}\sum_{i=1}^n \{f(z_i) - \hat{f}(z_i)\}\{g(z_i) - \hat{g}(z_i)\}, \\
\nu_g := \frac{1}{\sqrt{n}} \sum_{i=1}^n \varepsilon_i \{g(z_i) - \hat{g}(z_i)\}, \qquad
\nu_f := \frac{1}{\sqrt{n}} \sum_{i=1}^n \xi_i \{f(z_i) - \hat{f}(z_i)\}.
\end{gather*}
Now
\[
\E(\varepsilon_i\xi_i) = \E\{\E(\varepsilon_i\xi_i |\mb Y, \mb Z)\} = \E\{\xi_i \E(\varepsilon_i |\mb Z)\} = 0.
\]
Thus the summands $\varepsilon_i \xi_i$ in the final term of \eqref{eq:T_num2} are i.i.d.\ mean zero with finite variance, so the central limit theorem dictates that this converges to a standard normal distribution. By the Cauchy--Schwarz inequality, we have
\begin{equation} \label{eq:A_fA_g}
|b| \leq  \sqrt{n}A_f^{1/2} A_g^{1/2} \inprob 0.
\end{equation}

We now turn to $\nu_f$ and $\nu_g$. Conditional on $\B{Y}$ and $\B{Z}$, $\nu_g$ is a sum of mean-zero independent terms and
\[
\var(\varepsilon_i \{g(z_i) - \hat{g}(z_i)\} | \B{Y}, \B{Z}) = \{g(z_i) - \hat{g}(z_i)\}^2 u(z_i).
\]
Thus our condition on $B_g$ gives us that $\E(\nu_g^2 |\mb Y, \mb Z) \inprob 0$.
Thus given $\epsilon > 0$
\begin{align}
\prob(\nu_g^2 \geq \epsilon) &= \pr(\nu_g^2 \wedge \epsilon \geq \epsilon) \notag\\
&\leq \epsilon^{-1} \E\{\E(\nu_g^2 \wedge \epsilon | \B{Y}, \B{Z})\} \notag\\
&\leq \epsilon^{-1} \E\{\E(\nu_g^2 | \B{Y}, \B{Z}) \wedge \epsilon \} \to 0 \label{eq:nu_g_bd}
\end{align}
using bounded convergence (Lemma~\ref{lem:bdd_conv}).

Using Slutsky's lemma, we may conclude that $\tau_N \indist \mathcal{N}(0, 1)$. We now argue that the denominator $\tau_D$ will converge to 1 in probability, which will give us $T_n \indist \mathcal{N}(0, 1)$ again by Slutsky's lemma.

First note that from the above we have in particular that $(b + \nu_f + \nu_g)/n \inprob 0$. Thus $\sum_{i=1}^n R_i/n \inprob 0$ by the weak law of large numbers (WLLN). It suffices therefore to show that $\sum_{i=1}^n R_i^2/n \inprob 1$. Now
\begin{align*}
|R_i^2 - \varepsilon_i^2\xi_i^2| \leq& [\{f(z_i) - \hat{f}(z_i)\}^2 + 2|\varepsilon_i\{f(z_i) - \hat{f}(z_i)\}|] \; [\{g(z_i) - \hat{g}(z_i)\}^2 + 2|\xi_i\{g(z_i) - \hat{g}(z_i)\}|] \\
&+ \varepsilon_i ^2 [\{g(z_i) - \hat{g}(z_i)\}^2 + 2|\xi_i\{g(z_i) - \hat{g}(z_i)\}|]\\
&+ \xi_i ^2 [\{f(z_i) - \hat{f}(z_i)\}^2 + 2|\varepsilon_i\{f(z_i) - \hat{f}(z_i)\}|] \\
=& \text{I}_i + \text{II}_i + \text{III}_i.
\end{align*}
Multiplying out and using the inequality $2|ab| \leq a^2 + b^2$ we have
\begin{align*}
\text{I}_i \leq & 3\{f(z_i) - \hat{f}(z_i)\}^2 \{g(z_i) - \hat{g}(z_i)\}^2 + 
\varepsilon_i^2 \{g(z_i) - \hat{g}(z_i)\}^2 + \xi_i^2 \{f(z_i) - \hat{f}(z_i)\}^2 \\
&+ 4|\varepsilon_i \xi_i \{f(z_i) - \hat{f}(z_i)\}\{g(z_i) - \hat{g}(z_i)\} |
\end{align*}
Now
\[
\frac{1}{n}\sum_{i=1}^n \{f(z_i) - \hat{f}(z_i)\}^2 \{g(z_i) - \hat{g}(z_i)\}^2 \leq n A_f A_g \inprob 0.
\]
Next note that for any $\epsilon > 0$,
\[
 \E\left( \epsilon \wedge \frac{1}{n}\sum_{i=1}^n\varepsilon_i^2 \{g(z_i) - \hat{g}(z_i)\}^2 \Big|\mb Y, \mb Z \right) = \E(\nu_g^2 \wedge \epsilon | \mb Y, \mb Z),
\]
so
\begin{equation} \label{eq:g_sum_0}
\frac{1}{n} \sum_{i=1}^n\varepsilon_i^2 \{g(z_i) - \hat{g}(z_i)\}^2 \inprob 0
\end{equation}
by the same argument as used to show $\nu_g \inprob 0$. Similarly, we also have that the corresponding term involving $f$, $\hat{f}$ and $\xi_i^2$ tends to 0 in probability. For the final term in $\text{I}_i$, we have
\begin{align*}
\frac{1}{n} \sum_{i=1}^n |\varepsilon_i \xi_i \{f(z_i) - \hat{f}(z_i)\}\{g(z_i) - \hat{g}(z_i)\} | \leq \bigg(\frac{1}{n} \sum_{i=1}^n \varepsilon_i^2 \xi_i^2 \bigg)^{1/2} \bigg(\frac{1}{n} \sum_{i=1}^n \{f(z_i) - \hat{f}(z_i)\}^2 \{g(z_i) - \hat{g}(z_i)\}^2 \bigg)^{1/2}.
\end{align*}
The first term above converges to $\{\E(\varepsilon_i^2 \xi_i^2)\}^{1/2}$ by the WLLN and the final term is bounded above by $\sqrt{nA_f A_g} \inprob 0$.

Turning now to $\text{II}_i$, we have
\[
\frac{1}{n}\sum_{i=1}^n \varepsilon_i^2 |\xi_i \{g(z_i) - \hat{g}(z_i)\}| \leq \bigg(\frac{1}{n}\sum_{i=1}^n \varepsilon_i^2\xi_i^2\bigg)^{1/2} \bigg(\frac{1}{n}\sum_{i=1}^n \varepsilon_i^2 \{g(z_i) - \hat{g}(z_i)\}^2\bigg)^{1/2} \inprob 0
\]
by WLLN and \eqref{eq:g_sum_0}.
Similarly we also have $\sum_{i=1}^n \text{III}_i/n \inprob 0$. As $\sum_{i=1}^n \varepsilon_i^2 \xi_i^2 /n \inprob \E(\varepsilon_i^2\xi_i^2) = 1$ by WLLN, we have $\tau_D \inprob 1$ as required.

The uniform result (ii) follows by an analogous 
argument to the above, the only differences being that all convergence in probability statements must be uniform, and the convergence in distribution via the central limit theorem must also be uniform over $\mathcal{P}$. These stronger properties follow easily from the stronger assumptions given in the statement of the result; that they suffice for uniform versions of the central limit theorem, WLLN and the particular applications of Slutsky's lemma required here to hold is shown in Lemmas~\ref{lem:uniform_CLT}, \ref{lem:uniform_WLLN} and \ref{lem:uniform_Slutsky} below.

\subsection{Uniform convergence results}
\begin{lemma}\label{lem:uniform_CLT}
Let $\mathcal{P}$ be a family of distributions for a random variable $\zeta \in \R$ and suppose $\zeta_1,\zeta_2,\ldots$ are i.i.d.\ copies of $\zeta$. For each $n \in \mathbb{N}$ let $S_n = n^{-1/2} \sum_{i=1}^n \zeta_i $. Suppose that for all $P\in \mathcal{P}$ we have $\E_P(\zeta) =0$, $\E_P(\zeta^2)=1$ and $\E_P(|\zeta|^{2+\eta}) < c$ for some $\eta, c > 0$. We have that
\[
\lim_{n \to \infty} \sup_{P \in \mathcal{P}} \sup_{t \in \R} | \pr_P(S_n\leq t) - \Phi(t)| = 0.
\]
\end{lemma}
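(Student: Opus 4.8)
The plan is to reduce the uniform statement to the classical Lindeberg--Feller central limit theorem for triangular arrays together with Pólya's theorem, via a subsequence/contradiction argument. Throughout I would use the normalisation $\E_P(\zeta^2) = 1$ under which the lemma is invoked in the proof of Theorem~\ref{thm:univariate} (after the rescaling there); note that some control of the variance is genuinely needed, since e.g.\ $\zeta \equiv 0$ satisfies the moment hypotheses but $S_n \equiv 0$. So: suppose the conclusion fails. Then there exist $\epsilon > 0$, a strictly increasing sequence $(n_m)_{m\in\mathbb{N}}$, distributions $P_m \in \mathcal{P}$ and points $t_m \in \R$ with $|\pr_{P_m}(S_{n_m} \leq t_m) - \Phi(t_m)| \geq \epsilon$ for every $m$.

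Next I would set up the triangular array whose $m$th row consists of $X_{m,i} := \zeta_i/\sqrt{n_m}$, $i = 1,\ldots,n_m$, where $\zeta_1,\zeta_2,\ldots$ are i.i.d.\ copies of $\zeta$ under $P_m$, and verify the Lindeberg--Feller hypotheses \emph{along the array}. We have $\E_{P_m}(X_{m,i}) = 0$ and $\sum_{i=1}^{n_m}\E_{P_m}(X_{m,i}^2) = \E_{P_m}(\zeta^2) = 1$, while for each $\delta > 0$
\[
\sum_{i=1}^{n_m} \E_{P_m}\!\left[X_{m,i}^2\,\ind_{\{|X_{m,i}| > \delta\}}\right] = \E_{P_m}\!\left[\zeta^2\,\ind_{\{|\zeta| > \delta\sqrt{n_m}\}}\right] \leq \frac{\E_{P_m}(|\zeta|^{2+\eta})}{(\delta\sqrt{n_m})^{\eta}} \leq \frac{c}{\delta^{\eta}\, n_m^{\eta/2}} \longrightarrow 0
\]
as $m \to \infty$, using $\zeta^2 \leq |\zeta|^{2+\eta}(\delta\sqrt{n_m})^{-\eta}$ on the event $\{|\zeta| > \delta\sqrt{n_m}\}$ and the uniform moment bound. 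Hence the Lindeberg condition holds, and the Lindeberg--Feller theorem yields $S_{n_m} = \sum_{i=1}^{n_m} X_{m,i} \indist \mathcal{N}(0,1)$ along this subsequence.

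Finally, writing $F_m(t) := \pr_{P_m}(S_{n_m} \leq t)$, the previous step gives $F_m \to \Phi$ weakly; since $\Phi$ is continuous on all of $\R$, Pólya's theorem upgrades this to $\sup_{t\in\R}|F_m(t) - \Phi(t)| \to 0$, which contradicts $|F_m(t_m) - \Phi(t_m)| \geq \epsilon$. This completes the proof. (Alternatively, one could invoke a fractional-moment Berry--Esseen bound of the form $\sup_t|\pr_P(S_n\le t) - \Phi(t)| \lesssim_\eta \E_P(|\zeta|^{2+\eta})\, n^{-\eta/2}$, but the route above needs only the classical Lindeberg--Feller theorem and Pólya's theorem.) The one non-mechanical point to get right is that, because $P_m$ changes with $m$, this must be phrased as a triangular array rather than a single i.i.d.\ sequence; the uniform $(2+\eta)$-th moment bound is precisely what makes the Lindeberg condition hold uniformly along any such array, so that is where the hypothesis does its work.
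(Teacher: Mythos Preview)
Your proof is correct and takes essentially the same approach as the paper: both reduce to the Lindeberg--Feller CLT for triangular arrays, with the paper choosing near-maximisers $P_n$ directly for each $n$ rather than your subsequence/contradiction route. Your explicit invocation of P\'olya's theorem and your observation about the implicit unit-variance normalisation (which the paper's proof also silently assumes, inheriting it from the rescaling at the start of the proof of Theorem~\ref{thm:univariate}) are both on point and in fact make your write-up slightly more careful than the original.
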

\begin{proof}
For each $n$, let $P_n \in \mathcal{P}$ satisfy
\begin{equation} \label{eq:sup_eq}
\sup_{P \in \mathcal{P}} \sup_{t \in \R} | \pr_P(S_n\leq t) - \Phi(t)| \leq  \sup_{t \in \R} | \pr_{P_n}(S_n\leq t) - \Phi(t)| + n^{-1}.
\end{equation}
By the central limit theorem for triangular arrays \citep[Proposition 2.27]{vaart_1998}, we have
\[
\lim_{n \to \infty} \sup_{t \in \R} |\pr_{P_n}(S_n \leq t) - \Phi(t)| = 0,
\]
thus taking limits in \eqref{eq:sup_eq} immediately yields the result.
\end{proof}

\begin{lemma}\label{lem:uniform_WLLN}
Let $\mathcal{P}$ be a family of distributions for a random variable $\zeta \in \R$ and suppose $\zeta_1,\zeta_2,\ldots$ are i.i.d.\ copies of $\zeta$. For each $n \in \mathbb{N}$ let $S_n = n^{-1} \sum_{i=1}^n \zeta_i $. Suppose that for all $P\in \mathcal{P}$ we have $\E_P(\zeta) =0$ and $\E_P(|\zeta|^{1+\eta}) < c$ for some $\eta, c > 0$. We have that for all $\epsilon > 0$,
\[
\lim_{n \to \infty} \sup_{P \in\mathcal{P}} \pr_P(|S_n| > \epsilon ) = 0.
\]
\end{lemma}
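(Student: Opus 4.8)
The plan is to run the classical truncation proof of the weak law of large numbers under a $(1+\eta)$-th moment bound, while checking that every probabilistic bound that appears depends only on $c$ and $\eta$, so that it is automatically uniform over $\mathcal{P}$.

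First I would dispose of the case $\eta > 1$: by Jensen's inequality (applied to the concave map $x \mapsto x^{2/(1+\eta)}$), $\E_P(\zeta^2) \leq (\E_P|\zeta|^{1+\eta})^{2/(1+\eta)} \leq c^{2/(1+\eta)}$ for every $P \in \mathcal{P}$, so the hypothesis holds with exponent $2$ and a new uniform constant; hence we may assume $\eta \in (0, 1]$.

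Next, fix $\epsilon > 0$ and a truncation level $M_n \uparrow \infty$ to be chosen, and split each summand as $\zeta_i = \bar\zeta_i + \hat\zeta_i$ with $\bar\zeta_i := \zeta_i \ind\{|\zeta_i| \leq M_n\}$ and $\hat\zeta_i := \zeta_i \ind\{|\zeta_i| > M_n\}$; write $\bar S_n$ and $\hat S_n$ for the corresponding averages, so $S_n = \bar S_n + \hat S_n$. For the tail part, $\E_P|\hat\zeta_1| \leq M_n^{-\eta}\E_P|\zeta|^{1+\eta} \leq c M_n^{-\eta}$, so Markov's inequality yields $\pr_P(|\hat S_n| > \epsilon/2) \leq 2 c M_n^{-\eta}/\epsilon$; moreover, since $\E_P\zeta = 0$ we have $|\E_P\bar\zeta_1| = |\E_P\hat\zeta_1| \leq c M_n^{-\eta}$, which falls below $\epsilon/4$ once $n$ is large. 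For the truncated part, using $\eta \leq 1$, $\var_P(\bar\zeta_1) \leq \E_P[\zeta^2\ind\{|\zeta|\leq M_n\}] \leq M_n^{1-\eta}\E_P|\zeta|^{1+\eta} \leq c M_n^{1-\eta}$, so Chebyshev's inequality gives $\pr_P(|\bar S_n - \E_P\bar S_n| > \epsilon/4) \leq 16 c M_n^{1-\eta}/(n\epsilon^2)$. All three of these bounds are free of $P$.

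Finally, taking $M_n = \sqrt n$ makes $M_n^{-\eta} = n^{-\eta/2} \to 0$ and $M_n^{1-\eta}/n = n^{-(1+\eta)/2} \to 0$; combining via $\pr_P(|S_n| > \epsilon) \leq \pr_P(|\bar S_n - \E_P\bar S_n| > \epsilon/4) + \pr_P(|\hat S_n| > \epsilon/2)$ (valid for all large $n$, using $|\E_P\bar S_n| < \epsilon/4$) produces an upper bound that is independent of $P$ and tends to $0$, and taking $\sup_{P\in\mathcal{P}}$ concludes. I do not expect any real obstacle: the content of the lemma is simply that the classical truncation bounds are uniform, which is guaranteed by the uniform moment hypothesis; the only thing to watch is that $M_n$ and every constant are chosen without reference to $P$. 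One could alternatively mimic the proof of Lemma~\ref{lem:uniform_CLT}, passing to a near-extremal sequence $P_n \in \mathcal{P}$ and invoking a triangular-array weak law, but the direct argument above is equally short.
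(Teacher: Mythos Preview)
Your proof is correct and follows essentially the same truncation strategy as the paper: split into a bounded part controlled by Chebyshev and a tail part controlled by Markov, with every bound depending only on $c$ and $\eta$ and hence uniform in $P$. The only cosmetic differences are that the paper truncates at a fixed level $M$ (chosen after $\epsilon,\delta$) and bounds $\var_P(\zeta^<)\leq M^2$ directly---so it does not need your reduction to $\eta\leq 1$---while it handles the tail expectation via H\"older rather than your cleaner inequality $\E_P|\hat\zeta_1|\leq cM_n^{-\eta}$.
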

\begin{proof}
Given $M>0$ (to be fixed at a later stage), let $\zeta^< := \zeta \ind_{\{|\zeta|\leq M\}}$ and $\zeta^> := \zeta \ind_{\{|\zeta| > M\}}$. Define $\zeta_i^<$ and $\zeta_i^>$ analogously, and let $S_n^<$ be the average of $\zeta_1^<, \ldots, \zeta_n^<$ with $S_n^>$ defined similarly. Note that by Chebychev's inequality, we have
\[
\sup_{P \in \mathcal{P}} \pr_P(|S_n^< - \E_P \zeta^<| \geq t) \leq \frac{M^2}{nt^2}.
\]
Also, by Markov's inequality and then the triangle inequality, we have for each $n$ that
\begin{align} \label{eq:S_n_Markov}
\sup_{P \in \mathcal{P}} \pr_P(|S_n^>| \geq t) \leq \frac{\sup_{P \in \mathcal{P}}\E_P |S_n^>|}{t} \leq \frac{\sup_{P \in \mathcal{P}}\E_P|\zeta^>|}{t}.
\end{align}
We now proceed to bound $\E_P|\zeta^>|$ in terms of $M$. We have
\begin{align*}
\E_P(|\zeta^>|) &= \E_P(|\zeta| \ind_{\{|\zeta| > M\}}) \\
&\leq \{\E_P(|\zeta|^{1+\eta})\}^{1/(\eta+1)} \{\pr_P(|\zeta|>M)\}^{\eta/(1 + \eta)} \\
&\leq c^{1/(\eta+1)} \{\pr_P(|\zeta|>M)\}^{\eta/(1 + \eta)}
\end{align*}
using H\"older's inequality. By Markov's inequality, we have
\[
\pr_P(|\zeta|>M) \leq \frac{\E_P|\zeta|}{M} \leq \frac{c}{M}.
\]
We may therefore conclude that $\sup_{P \in \mathcal{P}}\E_P(|\zeta^>|) \to 0$ as $M \to \infty$. Thus from \eqref{eq:S_n_Markov}, we have
\[
\sup_n \sup_{P \in \mathcal{P}} \pr_P(|S_n^>| \geq t) \to 0 \;\; \text{ as } M \to \infty,
\]
for each fixed $t$. Note further that as $\E_P \zeta = 0$ we have
\[
\sup_{P \in \mathcal{P}} |\E_P \zeta^<|  = \sup_{P \in \mathcal{P}} |\E_P \zeta^>| \leq \sup_{P \in \mathcal{P}} \E_P | \zeta^>| \to 0 \;\; \text{ as } M \to \infty.
\]
Now given $\epsilon, \delta > 0$, let $M$ be such that $\sup_{P \in \mathcal{P}} |\E_P \zeta^<| < \epsilon / 3$ and $\sup_n \sup_{P \in \mathcal{P}} \pr_P(|S_n^>| \geq \epsilon/3) < \delta / 2$. Next we choose $N \in \mathbb{N}$ such that $9 M^2 / (N \epsilon^2) < \delta / 2$. Then for all $n \geq N$ and all $P \in \mathcal{P}$, we have
\begin{align*}
\pr_P(|S_n| > \epsilon ) &\leq \pr_P(|S_n^<| > 2\epsilon/3 ) + \pr_P(|S_n^>| > \epsilon/3 ) \\
&\leq \pr_P(|S_n^< - \E_P \zeta^<| > \epsilon/3 ) + \delta/2 \leq \delta.
\end{align*}
\end{proof}

\begin{lemma}\label{lem:uniform_Slutsky}
Let $\mathcal{P}$ be a family of distributions that determines the law of a sequences $(V_n)_{n \in \mathbb{N}}$ and $(W_n)_{n \in \mathbb{N}}$ of random variables. Suppose
\[
\lim_{n \to \infty} \sup_{P \in \mathcal{P}} \sup_{t \in \R} |\pr_P(V_n\leq t) - \Phi(t)| =0.
\]
Then we have the following.
\begin{enumerate}[(a)]
\item
\[\text{If }\; W_n= o_{\mathcal{P}}(1) \;\text{ we have }\;
\lim_{n \to \infty} \sup_{P \in \mathcal{P}} \sup_{t \in \R} |\pr_P(V_n + W_n \leq t) - \Phi(t)| =0.
\]
\item \[\text{If } \; W_n= 1 + o_{\mathcal{P}}(1) \; \text{ we have } \;
\lim_{n \to \infty} \sup_{P \in \mathcal{P}} \sup_{t \in \R} |\pr_P(V_n / W_n \leq t) - \Phi(t)| =0.
\]
\end{enumerate}
\end{lemma}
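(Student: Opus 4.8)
The plan is to prove part (a) directly by a ``sandwiching'' argument exploiting the uniform continuity of $\Phi$, and then to reduce part (b) to part (a) by showing that $V_n/W_n - V_n = o_{\mathcal{P}}(1)$.

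For (a), fix $\epsilon>0$. For any $\delta>0$, $t\in\R$ and $P\in\mathcal{P}$, note that on the event $\{|W_n|\le\delta\}$ we have $V_n-\delta\le V_n+W_n\le V_n+\delta$, so
\[
\pr_P(V_n\le t-\delta)-\pr_P(|W_n|>\delta)\;\le\;\pr_P(V_n+W_n\le t)\;\le\;\pr_P(V_n\le t+\delta)+\pr_P(|W_n|>\delta).
\]
Writing $d_n:=\sup_{P\in\mathcal{P}}\sup_{s\in\R}|\pr_P(V_n\le s)-\Phi(s)|$ and $\omega(\delta):=\sup_{s\in\R}\{\Phi(s+\delta)-\Phi(s)\}$, this yields
\[
\sup_{t\in\R}|\pr_P(V_n+W_n\le t)-\Phi(t)|\;\le\;\omega(\delta)+d_n+\pr_P(|W_n|>\delta).
\]
Since $\Phi$ is uniformly continuous, $\omega(\delta)\to 0$ as $\delta\downarrow 0$; so first pick $\delta$ with $\omega(\delta)<\epsilon/3$, and then use $d_n\to 0$ (the hypothesis) together with $\sup_{P\in\mathcal{P}}\pr_P(|W_n|>\delta)\to 0$ (the meaning of $W_n=o_{\mathcal{P}}(1)$) to conclude that the right-hand side, after taking $\sup_{P\in\mathcal{P}}$, is below $\epsilon$ for all large $n$.

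For (b), I would first record a uniform tightness property of $V_n$ implied by the hypothesis: since $\pr_P(|V_n|>M)\le\{1-\Phi(M)\}+\Phi(-M)+2d_n$, for every $\eta>0$ there are $M$ and $N_1$ with $\sup_{P\in\mathcal{P}}\pr_P(|V_n|>M)<\eta$ for all $n\ge N_1$. I then claim $R_n:=V_n/W_n-V_n=V_n(1-W_n)/W_n$ satisfies $R_n=o_{\mathcal{P}}(1)$: given $\epsilon,\eta>0$, take $M,N_1$ as above (with bound $\eta/3$), choose $\gamma\in(0,1/2)$ with $2M\gamma<\epsilon$, and choose $N_2$ with $\sup_{P\in\mathcal{P}}\pr_P(|W_n-1|>\gamma)<\eta/3$ for $n\ge N_2$ (possible since $W_n=1+o_{\mathcal{P}}(1)$). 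On $\{|V_n|\le M\}\cap\{|W_n-1|\le\gamma\}$ one has $|W_n|\ge 1/2$, hence $|R_n|\le 2\gamma|V_n|\le 2M\gamma<\epsilon$; therefore $\sup_{P\in\mathcal{P}}\pr_P(|R_n|>\epsilon)<\eta$ for $n\ge\max(N_1,N_2)$. Since $V_n/W_n=V_n+R_n$, part (a) applied with $R_n$ in place of $W_n$ gives the result.

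The argument is essentially bookkeeping to ensure all the $\epsilon$--$\delta$ choices are made independently of $P$; the one step that needs genuine (if minor) care is the uniform tightness of $V_n$ used in (b), which is necessary because $V_n$ is not assumed bounded and $1-W_n$ is only small in probability, so to control the product $V_n(1-W_n)$ uniformly over $\mathcal{P}$ we must first trap $V_n$ in a fixed compact set with probability close to one, uniformly in $P$.
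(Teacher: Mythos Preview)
Your proof is correct. Part (a) is essentially identical to the paper's argument: both use the sandwich $\{V_n\le t-\delta\}\setminus\{|W_n|>\delta\}\subseteq\{V_n+W_n\le t\}\subseteq\{V_n\le t+\delta\}\cup\{|W_n|>\delta\}$ together with the uniform continuity of $\Phi$ (the paper simply uses $\Phi(t+\epsilon/3)-\Phi(t)<\epsilon/3$ directly, exploiting that $\Phi'\le 1$).

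For part (b) you take a genuinely different route. The paper argues directly: on $\{|W_n-1|\le\delta\}$ and for $t\ge 0$ one has $\{V_n/W_n\le t\}\subseteq\{V_n\le t(1+\delta)\}$ (and analogously for $t\le 0$ and for the reverse inclusion), so the problem reduces to showing $\sup_t|\Phi(t(1\pm\delta))-\Phi(t)|\to 0$ as $\delta\downarrow 0$, which the paper deduces from P\'olya's theorem applied to $(1\pm\delta)^{-1}\zeta\Rightarrow\zeta$. You instead write $V_n/W_n=V_n+R_n$ with $R_n=V_n(1-W_n)/W_n$, establish uniform tightness of $V_n$ from the Kolmogorov convergence hypothesis, and combine this with $W_n-1=o_{\mathcal{P}}(1)$ to get $R_n=o_{\mathcal{P}}(1)$, thereby reducing (b) to (a). Your reduction is cleaner and more modular; the paper's direct argument avoids the explicit tightness step but must treat the cases $t\ge 0$ and $t\le 0$ separately and invoke the extra fact about $\Phi(t(1\pm\delta))$.
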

\begin{proof}
We prove (a) first. Given $\epsilon > 0$, let $N$ be such that for all $n \geq N$ and for all $P \in \mathcal{P}$
\[
\sup_{t \in \R} |\pr_P(V_n\leq t) - \Phi(t)| < \epsilon/3 \;\;\text{ and } \;\; \pr(|W_n| > \epsilon/3) < \epsilon/3.
\]
Then
\begin{align*}
\pr_P(V_n + W_n \leq t) - \Phi(t) &\leq \pr_P(V_n \leq t + \epsilon/3) -\Phi(t) + \pr(|W_n| > \epsilon / 3) \\
&\leq \epsilon/3 + \Phi(t+\epsilon/3) - \Phi(t) + \epsilon / 3 <\epsilon,
\end{align*}
and
\begin{align*}
\pr_P(V_n + W_n \leq t) - \Phi(t) &\geq \pr_P(V_n \leq t - \epsilon/3) -\Phi(t) \\
&\geq -\epsilon/3 + \Phi(t-\epsilon/3) - \Phi(t)  > -\epsilon.
\end{align*}
Thus for all $n \geq N$ and for all $P \in \mathcal{P}$,
\[
\sup_{t \in \R} |\pr_P(V_n + W_n \leq t) - \Phi(t)| < \epsilon
\]
as required.
Turning to $(b)$, first let $\zeta \sim \mathcal{N}(0, 1)$ and note that for any sequence $(\epsilon_n)_{n \in \mathbb{N}}$ with $\epsilon_n \downarrow 0$, we have $(1+\epsilon_n)^{-1}\zeta$ converges in distribution to $\zeta$, whence $\sup_{t} |\Phi(t(1 + \epsilon_n)) - \Phi(t)| \to 0$ and similarly $\sup_{t} |\Phi(t(1 - \epsilon_n)) - \Phi(t)| \to 0$; note that the fact that we may take a supremum over $t$ here follows from \citet[Lemma~2.11]{vaart_1998}. Now, given $\epsilon > 0$, let $\delta$ be such that for all $0 \leq \delta' \leq \delta$, $\sup_t |\Phi(t(1 + \delta')) - \Phi(t(1 - \delta'))| \leq \epsilon / 3$. Next choose $N$ such that for all $n \geq N$ and for all $P \in \mathcal{P}$
\[
\sup_{t \in \R} |\pr_P(V_n\leq t) - \Phi(t)| < \epsilon/3 \;\;\text{ and } \;\; \pr(|W_n| > \delta) < \epsilon/3.
\]
Then for all $n \geq N$ and for all $P \in \mathcal{P}$, for $t \geq 0$
\begin{align*}
\pr_P(V_n / W_n \leq t) - \Phi(t) &\leq \pr_P(V_n \leq t(1 + \delta)) -\Phi(t) + \pr(|W_n| > \delta) \\
&\leq \epsilon/3 + \Phi(t(1+\delta)) - \Phi(t) + \epsilon / 3 <\epsilon.
\end{align*}
Similarly when $t \leq 0$, replacing $1+\delta$ with $1-\delta$ in the above argument gives the equivalent result. The inequality $\pr_P(V_n / W_n \leq t) - \Phi(t) > -\epsilon$ may be proved analogously. Putting things together then gives part (b) of the result.
\end{proof}

\subsection{Proof of Theorem~\ref{thm:power}}
The proof of this result is very similar to that of Theorem~\ref{thm:univariate}, and we will adopt the same notation here. We shall suppress the dependence on $P$ and $n$ at times to lighten the notation. We shall denote the auxiliary dataset by $\mb A$.
We begin by proving (i). We have
\begin{align}
\tau_N - \sqrt{n} \rho_P = (b + \nu_f + \nu_g) + \frac{1}{\sqrt{n}}\sum_{i=1}^n (\varepsilon_i \xi_i - \rho).
\end{align}
Thus the summands $\varepsilon_i \xi_i - \rho$ in the final term are i.i.d.\ mean zero with finite variance, so the central limit theorem dictates that this converges to a standard normal distribution.

Control of the term $b$ is identical to that in the proof of Theorem~\ref{thm:univariate}.
Turning to $\nu_f$ and $\nu_g$, Conditional on $\B{Z}$ and the auxiliary dataset $\mb A$, $\nu_g$ is a sum of mean-zero independent terms and
\[
\var(\varepsilon_i \{g(z_i) - \hat{g}(z_i)\} | \B{A}, \B{Z}) = \{g(z_i) - \hat{g}(z_i)\}^2 u(z_i).
\]
That $\nu_g \inprob 0$ follows exactly as in the argument preceding \eqref{eq:nu_g_bd}, and similarly for $\nu_f$.
Using Slutsky's lemma, we may conclude that $\tau_N -\sqrt{n} \rho_P \indist \mathcal{N}(0, 1)$.

The argument that $\tau_D \inprob \sigma$ proceeds similarly to that in the proof of Theorem~\ref{thm:univariate}, but with conditioning on $\mb X$ or $\mb Y$ replaced by conditioning on $\mb A$. The uniform result (ii) follows by an analogous argument, see the comments at the end of the proof of Theorem~\ref{thm:univariate}.

\subsection{Proof of Theorem~\ref{thm:multivariate}}
The proof of Theorem~\ref{thm:multivariate} relies heavily on results from \citet{chernozhukov2013gaussian} which we state in the next section for convenience, after which we present the proof Theorem~\ref{thm:multivariate}.

\subsubsection{Results from \citet{chernozhukov2013gaussian}}
In the following, $W \sim \mathcal{N}_p(0, \mbb\Sigma)$ where $\Sigma_{jj}  =1$ for $j=1,\ldots,p$ and $p \geq 3$. Set $V = \max_{j=1,\ldots,p}|W_j|$.
In addition, let $\tilde{w}_1,\ldots,\tilde{w}_n \in \R^p$ be independent random vectors having the same distribution as a random vector $\tilde{W}$ with $\E \tilde{W} = 0$ and covariance matrix $\mbb\Sigma$.

Consider the following conditions.
\begin{itemize}
\item[(B1a)] $\max_{k=1,2} \E(|\tilde{W}_j|^{2+k}/C_n^k) + \E(\exp(|\tilde{W}_j|/C_n)) \leq 4$ for all $j$;
\item[(B1b)] $\max_{k=1,2} \E(|\tilde{W}_j|^{2+k}/C_n^{k/2}) + \E(\max_{j=1,\ldots,p}|\tilde{W}_j|^4/C_n^2) \leq 4$ for all $j$;
\item[(B2)] $C_n^2 (\log(pn))^7/n \leq Cn^{-c}$ for some constants $C, c >0$.
\end{itemize}
We will assume that $\tilde{W}$ satisfies one of (B1a) and (B1b), and also satisfies (B2), for some $C_n \geq 1$. Here the constants implicit in the relationship $a \lesssim b$ will be permitted to depend on $c$ and $C$ (but not $C_n$).

Let
\[
\tilde{V} = \max_{j=1,\ldots,p} \bigg| \frac{1}{\sqrt{n}} \sum_{i=1}^n \tilde{w}_{ij} \bigg|.
\]

The labels of the corresponding results in \citet{chernozhukov2013gaussian} are given in brackets. A slight difference between our presentation of these results here and the statements in \citet{chernozhukov2013gaussian} is that we consider the maximum absolute value rather than the maximum.

\begin{lemma}[Lemma 2.1] \label{lem:anti_conc}
There exists an absolute constant $C'>0$ such that for all $t \geq 0$,
%\out{
%\[
%\sup_{w \geq 0} \pr(|\max_{j=1,\ldots,p} W_j - w| \leq t) \leq C't (\sqrt{2\log(p)}+1).
%\]
%}
\[
\sup_{w \geq 0} \pr(|V - w| \leq t) \leq C't (\sqrt{2\log(p)}+1).
\]
\end{lemma}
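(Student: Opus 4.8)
The result is Lemma~2.1 of \citet{chernozhukov2013gaussian}, so one option is simply to invoke it; but here is the argument one would give. Write $Z = \max_{j=1,\ldots,p} W_j$. Two reductions come first. (a)~Since the bound is claimed uniformly over all correlation matrices with unit diagonal, we may assume $\mbb\Sigma$ is positive definite: replace $W$ by $W + \delta\zeta$ with $\zeta \sim \mathcal{N}_p(0, I)$ independent and let $\delta \downarrow 0$ at the end, using continuity of $w \mapsto \pr(\cdot)$. (b)~Replace the nonsmooth map $z \mapsto \max_j z_j$ by the log-sum-exp surrogate $m_\beta(z) := \beta^{-1}\log\sum_{j=1}^p e^{\beta z_j}$, which satisfies $0 \le m_\beta(z) - \max_j z_j \le \beta^{-1}\log p$, has gradient $\pi(z) := \nabla m_\beta(z)$ lying in the probability simplex, and Hessian $\nabla^2 m_\beta(z) = \beta(\mathrm{diag}(\pi(z)) - \pi(z)\pi(z)^\top)$. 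Sandwiching $\ind_{\{w \le Z \le w+t\}}$ between smooth monotone functions of $Z$ and using (b) with $\beta \asymp t^{-1}\log p$ (so the surrogate gap is $O(t)$), the problem reduces to bounding $\sup_{u \in \R} \E[h((m_\beta(W) - u)/t)]$ for a fixed smooth probability kernel $h \ge 0$.

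The core estimate controls this ``smoothed density'' of $m_\beta(W)$ via Gaussian calculus. Using $\sum_j \pi_j(z) = 1$ one writes $h(m_\beta(z) - u) = \sum_j \partial_{z_j}[H(m_\beta(z) - u)]$ with $H' = h$, and applies Gaussian integration by parts together with the explicit Hessian above. The factor $\sqrt{2\log p}$ enters through two facts: $Z$ and $m_\beta(W)$ are both $1$-Lipschitz functions of the underlying standard Gaussian (being, respectively, a max and a softmax of the linear forms $w_0 \mapsto \langle a_j, w_0\rangle$ with $a_j$ the $j$th row of $\mbb\Sigma^{1/2}$, so $\|a_j\|_2 = 1$), hence $\mathrm{Var}(m_\beta(W)) \le 1$ by the Gaussian--Poincar\'e inequality, while $\E m_\beta(W) \le \E Z + \beta^{-1}\log p \le \sqrt{2\log p} + O(t)$. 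Carrying this through gives $\sup_u \E[h((m_\beta(W)-u)/t)] \lesssim t(\sqrt{2\log p} + 1)$, and unwinding the two reductions yields the claimed inequality.

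The step I expect to be the main obstacle is the degeneracy in the Gaussian-calculus identity for the density of $m_\beta(W)$: when the softmax weights $\pi(W)$ concentrate on a single coordinate (equivalently, one $W_j$ strictly dominates), the natural identity acquires a factor of the form $1 - \|\sum_j \pi_j(W) a_j\|_2^2$ that can be arbitrarily small, so it cannot simply be integrated out. This near-degenerate regime is exactly where $Z$ behaves like a nearly deterministic constant plus a shared fluctuation of scale $1/\sqrt{\log p}$, and it is also where the $\sqrt{\log p}$ factor is genuinely unavoidable (take $W_j = \rho G + \sqrt{1-\rho^2}\,\xi_j$ with $G, \xi_1,\ldots,\xi_p$ i.i.d.\ $\mathcal{N}(0,1)$ and $\rho \asymp 1/\sqrt{\log p}$). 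Resolving it requires splitting on the size of the largest softmax weight and, on the near-degenerate event, bounding the contribution through the Gaussian marginal density of the dominant $W_j$ on an interval of length $O(t)$; the restriction $w \ge 0$ is used precisely here, to replace that marginal density by its value at the origin. This case analysis is the substance of the proof and is carried out in full in \citet{chernozhukov2013gaussian}.
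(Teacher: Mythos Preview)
The paper does not give its own proof of this lemma: as the bracketed label indicates, it is simply quoted from \citet{chernozhukov2013gaussian} (their Lemma~2.1) and used as a black box in the subsequent arguments. Your opening sentence already matches the paper's treatment exactly; the remainder of your proposal is a bonus sketch of the underlying argument that goes beyond anything the paper attempts.
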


\begin{theorem}[Corollary 2.1] \label{thm:Chernozhukov_main}
Then there exists a constant $c'>0$ such that
\[
\sup_{t \in \R} |\pr(\tilde{V} \leq t) - \pr(V \leq t)| \lesssim n^{-c'}.
\]
\end{theorem}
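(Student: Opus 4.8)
The plan is to derive this from the Gaussian approximation for the ordinary \emph{maximum} of sums of high-dimensional random vectors in \citet{chernozhukov2013gaussian} (their Corollary~2.1, stated there without absolute values), via the standard device of doubling the coordinates so that a maximum of absolute values becomes an ordinary maximum.

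First I would set, for $i=1,\dots,n$, the $2p$-dimensional vectors $\check{w}_i := (\tilde{w}_{i1},\dots,\tilde{w}_{ip},-\tilde{w}_{i1},\dots,-\tilde{w}_{ip})$, with generic copy $\check{W}$, so that $\E\check{W}=0$. For any $a=(a_1,\dots,a_p)\in\R^p$ one has $\max(a_1,\dots,a_p,-a_1,\dots,-a_p)=\max_{1\le j\le p}|a_j|$; applying this with $a=\tfrac{1}{\sqrt n}\sum_{i=1}^n\tilde{w}_i$ shows that the unsigned maximum of the normalised sums of $\check{w}_1,\dots,\check{w}_n$ --- the statistic appearing in the max-version of \citet{chernozhukov2013gaussian} --- equals exactly $\tilde{V}$. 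Likewise, the covariance matrix $\check{\mbb\Sigma}$ of $\check{W}$ is the covariance of $(G,-G)$ for $G\sim\mathcal{N}_p(0,\mbb\Sigma)$, so a draw $\check{W}'\sim\mathcal{N}_{2p}(0,\check{\mbb\Sigma})$ satisfies $\max_{1\le j\le 2p}\check{W}'_j\eqdist \max_{1\le j\le p}|G_j|=V$; note $\check{\mbb\Sigma}$ has unit diagonal, as the max-version result requires, and $2p\ge 3$.

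Next I would verify that the hypotheses transfer. Since each $|\check{W}_j|$ equals some $|\tilde{W}_\ell|$ with $\ell\le p$, conditions (B1a) (resp.\ (B1b)) for $\tilde{W}$ hold verbatim for $\check{W}$ with the same sequence $C_n$ (the term $\E(\max_{j\le 2p}|\check{W}_j|^4/C_n^2)$ in (B1b) is literally $\E(\max_{j\le p}|\tilde{W}_j|^4/C_n^2)$). For (B2), $\log(2pn)\le 2\log(pn)$ when $p\ge 2$, so the bound holds in dimension $2p$ after multiplying the constant $C$ by $2^7$. The max-version Corollary~2.1 of \citet{chernozhukov2013gaussian} then applies to $\check{w}_1,\dots,\check{w}_n$ and yields $\sup_{t\in\R}|\pr(\tilde{V}\le t)-\pr(V\le t)|\lesssim n^{-c}$ for some $c>0$, which is the claim. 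The only care needed is the constant bookkeeping in (B2) under $p\mapsto 2p$, which is routine, so there is no real obstacle along this route.

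For completeness I note that a self-contained proof of the max-version bound (if one did not want to cite it) would combine (i) a Gaussian-to-sum comparison $|\E h(\tfrac{1}{\sqrt n}\sum_i\tilde{w}_i)-\E h(W)|$ for thrice-differentiable $h$, obtained by Slepian-type interpolation together with Stein's method and controlling the third-order remainder with (B1), and (ii) a smoothing step replacing $\ind_{\{\max_j|\cdot|\le t\}}$ by a smooth surrogate, with the smoothing error bounded through the anti-concentration inequality of Lemma~\ref{lem:anti_conc}; optimising the smoothing scale against $C_n$ and the logarithmic factors produces the polynomial rate $n^{-c}$. On that route the smoothing/anti-concentration balancing is the delicate part; the doubling reduction above sidesteps it, so that is the approach I would take.
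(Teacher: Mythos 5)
Your proposal is correct and is exactly the argument the paper leaves implicit: the paper gives no proof of this statement, simply importing Corollary~2.1 of \citet{chernozhukov2013gaussian} and remarking that it states the result for the maximum rather than the maximum of absolute values. Your coordinate-doubling reduction (together with the routine checks that the unit-diagonal covariance, conditions (B1a)/(B1b), and (B2) transfer to the $2p$-dimensional vectors) is the standard and correct way to bridge that gap.
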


The following  result includes a slight variant of Lemma 3.2 of \citet{chernozhukov2013gaussian} whose proof follows in exactly the same way.
\begin{lemma}[Lemmas 3.1 and 3.2] \label{lem:Gauss_comp}
Let $U \in \R^p$ be a centred Gaussian random vector with covariance matrix $\mbb\Theta \in \R^{p\times p}$ and let $\Delta_0 = \max_{j,k=1,\ldots,p}|\Sigma_{jk} - \Theta_{jk}|$. Define $q(\theta) := \theta^{1/3}(1 \vee \log(p/\theta))^{2/3}$.
There exists a constant $C'>0$ such that the following hold.
\begin{enumerate}[(i)]
\item 
\[
\sup_{t \in \R} |\pr(\max_{j=1,\ldots,p} |U_j| \leq t) - \pr(V \leq t)| \leq C' q(\Delta_0).
\]
\item Writing $G_{\mbb\Sigma}$ and $G_{\mbb\Theta}$ for the quantile functions of $V$ and 
$\max_{j=1,\ldots,p} |U_j|$ 
respectively,
\begin{align*}
G_{\mbb\Theta}(\alpha) \leq G_{\mbb\Sigma}(\alpha + C'q(\Delta_0)) \qquad \text{and} \qquad G_{\mbb\Sigma}(\alpha) \leq G_{\mbb\Theta}(\alpha + C'q(\Delta_0))
\end{align*}
for all $\alpha \in (0,1)$.
\end{enumerate}
\end{lemma}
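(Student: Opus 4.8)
The plan is to deduce the lemma from Lemmas 3.1 and 3.2 of \citet{chernozhukov2013gaussian}, the only genuinely new ingredient being a reduction from two-sided to one-sided maxima. Recall that the one-sided Gaussian comparison $\sup_t|\pr(\max_j U'_j \le t) - \pr(\max_j W'_j \le t)| \le C\, q(\Delta_0)$, with $\Delta_0 = \max_{j,k}|\cov(W'_j,W'_k) - \cov(U'_j,U'_k)|$, is proved by Gaussian interpolation: one sets $Z(s) = \sqrt{s}\,W' + \sqrt{1-s}\,U'$, replaces the maximum by the soft-max $F_\beta(z) = \beta^{-1}\log\sum_j \exp(\beta z_j)$ (so that $0 \le F_\beta(z) - \max_j z_j \le \beta^{-1}\log p$) and the indicator $\ind_{\{\,\cdot\,\le t\}}$ by a smooth function $g$ of scale $\delta$, then differentiates $\E[g(F_\beta(Z(s)))]$ in $s$ and applies Stein's identity. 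Using $\sum_j \partial_j F_\beta \equiv 1$ and $\sum_{j,k}|\partial_j\partial_k F_\beta| \lesssim \beta$, one finds the $s$-derivative is $\lesssim \Delta_0(\delta^{-2} + \beta\delta^{-1})$; combining this with the anti-concentration bound of Lemma~\ref{lem:anti_conc} to absorb the smoothing errors (which is legitimate since a small $\Delta_0$ forces the diagonal of the $U'$-covariance close to $1$), and optimising $\beta \asymp \delta^{-1}\log p$, $\delta \asymp (\Delta_0\sqrt{\log p})^{1/3}$, yields the rate $q(\Delta_0)$.

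For part (i), I would apply this with the doubled vectors $\bar{W} := (W_1,\dots,W_p,-W_1,\dots,-W_p) \in \R^{2p}$ and $\bar{U} := (U_1,\dots,U_p,-U_1,\dots,-U_p) \in \R^{2p}$, whose covariance matrices $\bar{\mbb\Sigma}$, $\bar{\mbb\Theta}$ satisfy $\max_{j,k \le 2p}|\bar\Sigma_{jk} - \bar\Theta_{jk}| = \Delta_0$, while $V = \max_{j \le 2p}\bar{W}_j$ and $\max_{j\le p}|U_j| = \max_{j \le 2p}\bar{U}_j$. The cited one-sided comparison in dimension $2p$ then gives $\sup_t|\pr(\max_j|U_j| \le t) - \pr(V \le t)| \le C\,\Delta_0^{1/3}(1 \vee \log(2p/\Delta_0))^{2/3}$, and since $\log(2p/\Delta_0) \le 2\,(1\vee\log(p/\Delta_0))$ this is at most $C'q(\Delta_0)$ after adjusting the constant; here I read the left-hand maximum as $\max_j|U_j|$, the two-sided statistic actually needed in Theorem~\ref{thm:multivariate}.

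Part (ii) is then a deterministic consequence of (i). Write $\epsilon := C'q(\Delta_0)$ and let $F_W$, $F_U$ be the distribution functions of $V$ and of $\max_j|U_j|$; both are continuous and strictly increasing on their supports, because $\Sigma_{jj} = 1 > 0$ (hence every $W_j$, and every $U_j$ when $\Delta_0 < 1$, is nondegenerate) rules out atoms, so $G_{\mbb\Sigma}$, $G_{\mbb\Theta}$ are genuine inverses there. By (i), $F_U(t) \ge F_W(t) - \epsilon$ for all $t$; taking $t = G_{\mbb\Sigma}(\alpha + \epsilon)$ gives $F_W(t) \ge \alpha + \epsilon$, hence $F_U(t) \ge \alpha$ and therefore $G_{\mbb\Theta}(\alpha) \le t = G_{\mbb\Sigma}(\alpha + \epsilon)$; interchanging $W$ and $U$ gives $G_{\mbb\Sigma}(\alpha) \le G_{\mbb\Theta}(\alpha + \epsilon)$. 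Since the interpolation argument is exactly that of \citet{chernozhukov2013gaussian}, the only thing requiring care is the bookkeeping of the doubling device --- verifying that it leaves $\Delta_0$ unchanged and merely replaces $p$ by $2p$, which is harmless inside the logarithm, and that anti-concentration still applies to $\bar U$ --- and the elementary quantile translation above; everything else may be imported verbatim, so I do not expect a substantive obstacle beyond this reduction.
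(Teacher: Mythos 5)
Your proposal is correct and matches the paper's treatment: the paper simply imports Lemmas 3.1 and 3.2 of \citet{chernozhukov2013gaussian}, remarking that the absolute-value variant ``follows in exactly the same way'', and your doubling device $\bar{W}=(W,-W)$, $\bar{U}=(U,-U)$ (which preserves $\Delta_0$ and only replaces $p$ by $2p$ inside the logarithm) together with the elementary quantile-inversion step is precisely the bookkeeping that remark elides. You also correctly read $\max_j U_j$ in part (i) as $\max_j |U_j|$, which is the intended and needed form of the statement.
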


\begin{lemma}[Lemma C.1 and the proof of Corollary 3.1] \label{lem:Sigma_conc}
Let $\tilde{\mbb\Sigma}$ be the empirical covariance matrix formed using $\tilde{w}_1, \ldots, \tilde{w}_n$, so $\tilde{\mbb\Sigma} = \sum_{i=1}^n \tilde{w}_i\tilde{w}_i^T / n$. Then
\begin{align*}
\log(p)^2 \E \|\tilde{\mbb\Sigma} - \mbb\Sigma\|_\infty  &\lesssim n^{-c'} \\
\log(p)^2 \E \bigg(\max_{j=1,\ldots,p} \bigg| \frac{1}{n} \sum_{i=1}^n \tilde{w}_{ij}\bigg| \bigg) &\lesssim n^{-c'}.
\end{align*}
for some constant $c' >0$.
\end{lemma}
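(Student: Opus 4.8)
This lemma is a restatement, adapted to $\ell_\infty$-norms (maxima of absolute values) rather than one-sided maxima, of Lemma~C.1 and of the moment computations appearing in the proof of Corollary~3.1 of \citet{chernozhukov2013gaussian}; we indicate the argument for completeness. The plan is to reduce both displayed bounds to a single maximal inequality for the $\ell_\infty$-norm of an average of independent mean-zero random vectors, and then to insert the moment bounds implied by (B1a) or (B1b) together with the rate condition (B2).

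The key tool is the maximal inequality (see the auxiliary results in the appendix of \citet{chernozhukov2013gaussian}): if $\zeta_1,\dots,\zeta_n\in\R^q$ are independent with $\E\zeta_i=0$, and we write $\bar\sigma^2 = \max_{\ell\le q} n^{-1}\sum_{i=1}^n\E\zeta_{i\ell}^2$ and $M=\max_{i\le n}\max_{\ell\le q}|\zeta_{i\ell}|$, then
\[
\E\Big[\max_{\ell\le q}\Big|\tfrac1n\sum_{i=1}^n\zeta_{i\ell}\Big|\Big] \;\lesssim\; \sqrt{\frac{\bar\sigma^2\log q}{n}} + \frac{(\E M^2)^{1/2}\log q}{n}.
\]
For the second displayed bound I would apply this with $q=2p$ and $\zeta_i = (\tilde w_{i1},-\tilde w_{i1},\dots,\tilde w_{ip},-\tilde w_{ip})$, so that $\max_\ell|\cdots|=\max_j|n^{-1}\sum_i\tilde w_{ij}|$. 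Here $\bar\sigma^2=\max_j\E\tilde W_j^2=1$ since $\Sigma_{jj}=1$, and $\E M^2$ is controlled using the moment assumption: under (B1b), $\E\max_j\tilde W_j^4\le 4C_n^2$ forces $\E M^2\le(\sum_i\E(\max_j\tilde w_{ij}^2)^2)^{1/2}\lesssim\sqrt n\,C_n$, while under (B1a), sub-exponentiality of each $\tilde W_j/C_n$ gives $\E M^2\lesssim C_n^2(\log(np))^2$. Substituting and multiplying through by $(\log p)^2$ produces a bound that is a fixed power of $C_n$ times a fixed power of $\log(pn)$ divided by a positive power of $n$; condition (B2) bounds $C_n^4$ by $Cn^{1-c}/(\log(pn))^7$, and a short exponent calculation then shows the whole expression is $\lesssim n^{-c'}$ for some $c'>0$, in both moment regimes.

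For the first displayed bound I would use that $\tilde\Sigma_{jk}-\Sigma_{jk}=n^{-1}\sum_{i=1}^n(\tilde w_{ij}\tilde w_{ik}-\E[\tilde W_j\tilde W_k])$ is again an average of independent mean-zero variables, now indexed by the $p^2$ pairs $(j,k)$, and apply the same maximal inequality with $q=2p^2$ to the signed centred products. By Cauchy--Schwarz the per-coordinate second moment is at most $(\E\tilde W_j^4)^{1/2}(\E\tilde W_k^4)^{1/2}\lesssim C_n$ (using the $k=2$ case of (B1b), or the exponential moment in (B1a)), so $\bar\sigma^2\lesssim C_n$; and the envelope is at most $M_0^2$ with $M_0=\max_{i,j}|\tilde w_{ij}|$, whose fourth moment is bounded exactly as $\E M^2$ was above. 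The resulting bound, multiplied by $(\log p)^2$ and combined with (B2) (noting $\log q=\log(2p^2)\lesssim\log p$), is again $\lesssim n^{-c'}$ after decreasing $c'$ if necessary.

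The main obstacle is purely the bookkeeping: one must run the exponent arithmetic separately for the two moment regimes (B1a) and (B1b), verify that the envelope moments $\E M^2$ and $\E M_0^4$ are controlled in each case, and check that the slack in (B2)—the seventh power of $\log(pn)$ and the fourth power of $C_n$—is precisely enough to absorb the $(\log p)^2$ prefactor, the $\log q$ factors from the maximal inequality (with $q$ as large as $2p^2$), and the extra $\sqrt{C_n}$ coming from the variance of the products, while still leaving a strictly negative power of $n$.
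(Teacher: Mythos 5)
Your proposal is correct and follows essentially the same route as the source the paper relies on: the paper gives no independent proof of this lemma, simply citing Lemma~C.1 and the proof of Corollary~3.1 of \citet{chernozhukov2013gaussian} (and remarking that the second inequality ``follows easily in a similar manner''), and those cited arguments are exactly the combination you describe of the CCK maximal inequality for $\ell_\infty$-norms of averages, the envelope/variance bounds from (B1a) or (B1b), and the exponent arithmetic afforded by (B2). Your reconstruction, including the reduction of the covariance bound to centred products indexed by $p^2$ pairs and the verification that the $(\log p)^2$ prefactor is absorbed, is a faithful and correct filling-in of that citation.
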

Note the second inequality does not appear in \citet{chernozhukov2013gaussian} but follows easily in a similar manner to the first inequality.

\subsubsection{Proof of Theorem~\ref{thm:multivariate}} \label{sec:mvt_proof}
We will assume, without loss of generality, that $\var_P(\varepsilon_{P,j}\xi_{P,k}) = 1$ for all $P \in \mathcal{P}$. Furthermore, we will suppress dependence on $P$ and $n$ at times in order to lighten the notation. We will use $C'$ to denote a positive constant that may change from line to line.

Note that we have $\E(\varepsilon_j\xi_k) = 0$. Let us decompose $\sqrt{n}\bar{\mb R}_{jk} = \delta_{jk} + \tilde{T}_{jk}$ where
\[
\tilde{T}_{jk} = \frac{1}{\sqrt{n}} \sum_{i=1}^n \varepsilon_{ij} \xi_{ik}.
\]
Furthermore, let us write the denominator in the definition of $T_{jk}$ as $(\|\mb R_{jk}\|_2^2/n - \bar{\mb R}_{jk}^2)^{1/2} = 1 + \Delta_{jk}$
We thus have $T_{jk} = (\tilde{T}_{jk} + \delta_{jk}) / (1 + \Delta_{jk})$. Let $\tilde{S}_n = \max_{j,k} |\tilde{T}_{jk}|$.

Let $\mbb\Sigma \in \R^{d_X \cdot d_Y \times d_X \cdot d_Y}$ be the matrix with columns and rows indexed by pairs $jk=(j,k) \in \{1,\ldots,d_X\}\times \{1,\ldots,d_Y\}$ and entries given by $\Sigma_{jk,lm} = \E(\varepsilon_j\varepsilon_l\xi_k\xi_m)$. Let $W \in \R^{d_X\cdot d_Y}$ be a centred Gaussian random vector with covariance $\mbb\Sigma$ and let $V_n$ be the maximum of the absolute values of components of $W$. We write $G$ for the quantile function of $V_n$. Note that from Lemma~\ref{lem:anti_conc}, we have in particular that $V_n$ has no atoms, so $\pr(V_n \leq G(\alpha)) = \alpha$ for all $\alpha \in [0,1]$.

Let
\[
\kappa_P = \sup_{t \geq 0} |\pr_P(S_n \leq t) - \pr(V_n \leq t)|.
\]
We will first obtain a bound on
\[
v_P(\alpha) = |\pr_P(S_n \leq \hat{G}(\alpha)) - \alpha|
\]
in terms of $\kappa_P$, and later bound $\kappa_P$ itself.
Fixing $P \in \mathcal{P}$ and suppressing dependence on this, we have
\begin{align*}
v(\alpha) &\leq |\pr(S_n \leq \hat{G}(\alpha)) - \pr(S_n \leq G(\alpha))| + |\pr(S_n \leq G(\alpha)) - \pr(V_n \leq G(\alpha))| \\
&\leq \pr(\{S_n \leq \hat{G}(\alpha)\} \bigtriangleup \{S_n \leq G(\alpha)\}) + \kappa
\end{align*}
where we have used the fact that $|\pr(A) - \pr(B)| \leq \pr(A \bigtriangleup B)$. Now from Lemma~\ref{lem:Gauss_comp} we know that on the event $\{\|\mbb\Sigma - \hat{\mbb\Sigma}\|_\infty \leq u_\Sigma\}$, we have $G(\alpha - Cq(u_\Sigma)) \leq \hat{G}(\alpha) \leq G(\alpha + Cq(u_\Sigma))$. Thus
\begin{align*}
& \pr(\{S_n \leq \hat{G}(\alpha)\} \bigtriangleup \{S_n \leq G(\alpha)\})  \leq \pr\{G(\alpha - C'q(u_\Sigma)) \\
\leq \; &  S_n \leq G(\alpha + C'q(u_\Sigma))\} + \pr(\|\mbb\Sigma - \hat{\mbb\Sigma}\|_\infty > u_\Sigma) \\
\leq \; & 2\kappa + \pr\{G(\alpha - C'q(u_\Sigma)) \leq  V_n \leq G(\alpha + C'q(u_\Sigma))\} + \pr(\|\mbb\Sigma - \hat{\mbb\Sigma}\|_\infty > u_\Sigma) \\
= \; & 2\kappa + 2C'q(u_\Sigma) + \pr(\|\mbb\Sigma - \hat{\mbb\Sigma}\|_\infty > u_\Sigma).
\end{align*}
This gives
\[
v(\alpha) \lesssim \kappa + q(u_\Sigma) + \pr(\|\mbb\Sigma - \hat{\mbb\Sigma}\|_\infty > u_\Sigma).
\]

Now let $\Omega$ be the event that $\max_{j,k} |\delta_{jk}| \leq u_\delta$ and $\max_{j,k} |\Delta_{jk}| \leq u_{\Delta}$.
\begin{align*}
\kappa &\leq \sup_{t \geq 0} \{|\pr(\tilde{S}_n \leq t(1 + u_\Delta) + u_\delta) - \pr(V_n \leq t)| + |\pr(\tilde{S}_n \leq t(1 - u_\Delta) - u_\delta) - \pr(V_n \leq t)|\} + \pr(\Omega^c) \\
&\leq \sup_{t \geq 0} \{|\pr(\tilde{S}_n \leq t) - \pr\{V_n \leq (t - u_\delta)/(1+u_\Delta)\}| + |\pr(\tilde{S}_n \leq t) - \pr\{V_n \leq (t + u_\delta)/(1-u_\Delta)\}|\} + \pr(\Omega^c).
\end{align*}
Now
\begin{align*}
&|\pr(\tilde{S}_n \leq t) - \pr\{V_n \leq (t - u_\delta)/(1+u_\Delta)\}| \\
\leq & |\pr(\tilde{S}_n \leq t) - \pr(V_n \leq t)| + |\pr(V_n \leq t - u_\delta) - \pr((1+u_\Delta)V_n \leq t - u_\delta)| + |\pr(t-u_\delta \leq V_n \leq t)| \\
\leq & \text{I} + \text{II} + \text{III}.
\end{align*}
From Theorem~\ref{thm:Chernozhukov_main}, we have $\text{I} = o(1)$. Lemma~\ref{lem:Gauss_comp} and Lemma~\ref{lem:anti_conc} give
\[
\text{II} + \text{III} \lesssim q(u_\Delta) + u_\delta \sqrt{\log(d)}.
\]
Similarly,
\[
|\pr(\tilde{S}_n \leq t) - \pr\{V_n \leq (t + u_\delta)/(1-u_\Delta)\}| \lesssim q(u_\Delta) + u_\delta \sqrt{\log(d)} + o(1).
\]
Putting things together, we have
\begin{align*}
v(\alpha) \lesssim &\; \pr(\|\mbb\Sigma - \hat{\mbb\Sigma}\|_\infty > u_\Sigma) + q(u_\Sigma) +  \pr(\max_{j,k} |\delta_{jk}| > u_\delta) + u_\delta\sqrt{\log(d)} \\\
& + \pr(\max_{j,k} |\Delta_{jk}| > u_\Delta) + q(u_\Delta) + o(1).
\end{align*}
We thus see that writing $a_n = \log(d)^{-2}$, if $\max_{j,k} |\delta_{jk}|= o_\mathcal{P}(a_n^{1/4})$, $\max_{j,k}|\Delta_{jk}|=o_\mathcal{P}(a_n)$ and $\|\mbb\Sigma - \hat{\mbb\Sigma}\|_\infty = o_\mathcal{P}(a_n)$, then we will have $\sup_{P \in \mathcal{P}} \sup_{\alpha \in (0,1)} v_P(\alpha) \to 0$. These remaining properties are shown in Lemma~\ref{lem:Sigma_bd}.

\subsection{Auxiliary Lemmas}

\begin{lemma} \label{lem:bdd_conv}
Let $\mathcal{P}$ be a family of distributions determining the distribution of a sequence of random variables $(W_n)_{n \in \mathbb{N}}$. Suppose $W_n = o_\mathcal{P}(1)$ and $|W_n|<C$ for some $C > 0$. Then $\sup_{P \in \mathcal{P}} \E_P |W_n| \to 0$.
\end{lemma}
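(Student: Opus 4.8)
The plan is to prove the uniform statement $\sup_{P\in\mathcal{P}}\E_P|W_n|\to 0$ (which contains the displayed conclusion) by the classical truncation argument underlying the bounded convergence theorem, carried out directly with convergence in probability in place of almost sure convergence, since the latter is not assumed here.

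First I would fix $\epsilon>0$ and, for each $P\in\mathcal{P}$, split
$\E_P|W_n| = \E_P\!\left(|W_n|\ind_{\{|W_n|\le\epsilon\}}\right) + \E_P\!\left(|W_n|\ind_{\{|W_n|>\epsilon\}}\right)$.
The first term is at most $\epsilon$. For the second term I would use the almost sure bound $|W_n|<C$ to get $\E_P\!\left(|W_n|\ind_{\{|W_n|>\epsilon\}}\right)\le C\,\pr_P(|W_n|>\epsilon)$. Hence $\E_P|W_n|\le \epsilon + C\,\pr_P(|W_n|>\epsilon)$ for every $P\in\mathcal{P}$.

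Taking the supremum over $P\in\mathcal{P}$ then gives $\sup_{P\in\mathcal{P}}\E_P|W_n| \le \epsilon + C\sup_{P\in\mathcal{P}}\pr_P(|W_n|>\epsilon)$. Letting $n\to\infty$, the definition of $W_n=o_\mathcal{P}(1)$ from Section~\ref{sec:notation} yields $\sup_{P\in\mathcal{P}}\pr_P(|W_n|>\epsilon)\to 0$, so $\limsup_{n\to\infty}\sup_{P\in\mathcal{P}}\E_P|W_n|\le\epsilon$. Since $\epsilon>0$ was arbitrary, this $\limsup$ equals $0$, which is the claim. There is no genuine obstacle here: the only point needing care is that the bound $|W_n|<C$ is to be applied pointwise and uniformly in both $n$ and $P$, so that the contribution of the ``large'' part is controlled purely by the probability term that the $o_\mathcal{P}$ hypothesis handles.
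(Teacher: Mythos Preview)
Your proof is correct and follows essentially the same truncation argument as the paper: split $\E_P|W_n|$ according to $\{|W_n|\le\epsilon\}$ and $\{|W_n|>\epsilon\}$, bound the two pieces by $\epsilon$ and $C\pr_P(|W_n|>\epsilon)$ respectively, and use $W_n=o_\mathcal{P}(1)$ to conclude. The only cosmetic difference is that the paper reuses the same $\epsilon$ as both the truncation level and the target bound on the probability, writing $\E_P|W_n|\le \epsilon + C\epsilon$ for $n$ large, whereas you first let $n\to\infty$ and then $\epsilon\downarrow 0$; the substance is identical.
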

\begin{proof}
Given $\epsilon > 0$, there exists $N$ such that $\sup_{P \in \mathcal{P}}\pr_P(|W_n| > \epsilon) < \epsilon$ for all $n \geq N$. Thus for such $n$,
\[
\E_P |W_n| = \E_P (|W_n| \ind_{\{|W_n| \leq \epsilon\}}) + \E_P (|W_n| \ind_{\{|W_n| >\epsilon\}}) \leq \epsilon + C \epsilon.
\]
As $\epsilon$ was arbitrary, we have $\sup_{P \in \mathcal{P}}\E_P |W_n| \to 0$.
\end{proof}

\begin{lemma} \label{lem:Sigma_bd}
Consider the setup of Theorem~\ref{thm:multivariate} and its proof (Section~\ref{sec:mvt_proof}). Let $a_n = \log(d)^{-2}$. We have that
\begin{enumerate}[(i)]
\item $\max_{j,k} |\delta_{jk}|= o_\mathcal{P}(a_n^{1/4})$;
\item $\max_{j,k} |\Delta_{jk}| = o_\mathcal{P}(a_n)$;
\item $\|\mbb\Sigma - \hat{\mbb\Sigma}\|_\infty = o_\mathcal{P}(a_n)$.
\end{enumerate}
\end{lemma}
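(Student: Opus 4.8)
The plan is to handle the three estimates in parallel, each time splitting the quantity of interest into an ``error-only'' piece depending on the population residual products $\varepsilon_{P,ij}\xi_{P,ik}$ alone --- which I would control by Bernstein's inequality plus a union bound over the at most $(d_Xd_Y)^2$ relevant index pairs, using the moment/tail conditions (A1a) or (A1b) and (A2), and where convenient by invoking Lemma~\ref{lem:Sigma_conc} and its companions from \citet{chernozhukov2013gaussian} directly --- and a ``remainder'' piece whose every summand carries at least one prediction-error factor $f_{P,j}(z_i)-\hat f_j(z_i)$ or $g_{P,k}(z_i)-\hat g_k(z_i)$, which I would bound by Cauchy--Schwarz in terms of $A_{f,j}, A_{g,k}$ and the envelopes $\tau_{f,n}, \tau_{g,n}$ before feeding it into the rate conditions \eqref{eq:A_cond}--\eqref{eq:tau_f_cond}. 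Throughout I would suppress $P$ and $n$ and use that, exactly as in the proof of Theorem~\ref{thm:univariate}, $\delta_{jk} = b_{jk} + \nu_{f,jk} + \nu_{g,jk}$ with $b_{jk} = n^{-1/2}\sum_i\{f_j(z_i)-\hat f_j(z_i)\}\{g_k(z_i)-\hat g_k(z_i)\}$ and $\nu_{g,jk} = n^{-1/2}\sum_i \varepsilon_{ij}\{g_k(z_i)-\hat g_k(z_i)\}$ (and symmetrically $\nu_{f,jk}$), that $\sqrt n\bar{\mb R}_{jk} = \delta_{jk} + \tilde T_{jk}$ with $\tilde T_{jk} = n^{-1/2}\sum_i\varepsilon_{ij}\xi_{ik}$, and that $1+\Delta_{jk} = (n^{-1}\|\mb R_{jk}\|_2^2 - \bar{\mb R}_{jk}^2)^{1/2}$.

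For (i): Cauchy--Schwarz gives $\max_{j,k}|b_{jk}| \le n^{1/2}\max_{j,k}(A_{f,j}A_{g,k})^{1/2} = o_\mathcal{P}(a_n^{1/4})$ by \eqref{eq:A_cond}. For $\nu_{g,jk}$ I would condition on $(\mb Y^{(n)},\mb Z^{(n)})$: under $\mathcal{P}_0$ we have $\E(\varepsilon_{ij}\mid\mb Y^{(n)},\mb Z^{(n)}) = 0$, so conditionally $\nu_{g,jk}$ is a sum of independent mean-zero terms; on a high-probability event they are bounded by $M_n\tau_{g,n}n^{-1/2}|g_k(z_i)-\hat g_k(z_i)|$ (for a slowly growing $M_n$, using $\max_{i,j}|\varepsilon_{ij}| = O_\mathcal{P}(\tau_{g,n})$ from~\eqref{eq:tau_g_cond}) and have conditional variance at most $B_{g,k}$, so Bernstein plus a union bound over the $d_Xd_Y$ pairs gives $\max_{j,k}|\nu_{g,jk}| = O_\mathcal{P}\big((\max_k B_{g,k}\log(d_Xd_Y))^{1/2} + \tau_{g,n}(\max_k A_{g,k})^{1/2}\log(d_Xd_Y)\big)$, which is $o_\mathcal{P}(a_n^{1/4})$ by \eqref{eq:B_cond} and \eqref{eq:tau_g_cond}; $\max_{j,k}|\nu_{f,jk}|$ is identical after conditioning on $(\mb X^{(n)},\mb Z^{(n)})$ and using \eqref{eq:B_cond} and \eqref{eq:tau_f_cond}. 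Summing the three bounds gives (i).

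For (ii) and (iii): expanding each residual as error plus prediction error, I would write, for index pairs $jk$ and $lm$, $n^{-1}\mb R_{jk}^T\mb R_{lm} = n^{-1}\sum_i\varepsilon_{ij}\xi_{ik}\varepsilon_{il}\xi_{im} + (\text{remainder})$, the remainder collecting the fifteen products with at least one prediction-error factor. The leading term is an entry of the empirical second-moment matrix of the vectors $(\varepsilon_{ij}\xi_{ik})_{j,k} \in \R^{d_Xd_Y}$, so Lemma~\ref{lem:Sigma_conc}, applied with dimension parameter $d_Xd_Y$ (so that (B1a)/(B1b)/(B2) there reduce to (A1a)/(A1b)/(A2)), gives $\max_{jk,lm}|n^{-1}\sum_i\varepsilon_{ij}\xi_{ik}\varepsilon_{il}\xi_{im} - \Sigma_{jk,lm}| = o_\mathcal{P}(a_n)$ and in particular $n^{-1}\sum_i\varepsilon_{ij}^2\xi_{ik}^2 = 1 + o_\mathcal{P}(a_n)$ uniformly. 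Each remainder summand I would bound by Cauchy--Schwarz: a typical one is at most $A_{f,j}^{1/2}\{n^{-1}\sum_i\xi_{ik}^2\varepsilon_{il}^2\xi_{im}^2\}^{1/2} \lesssim \tau_{f,n}A_{f,j}^{1/2}$ on a high-probability event (using $\max_{i,k}|\xi_{ik}| \le \tau_{f,n}$ and the preceding law of large numbers for $n^{-1}\sum_i\varepsilon_{il}^2\xi_{im}^2$), which is $o_\mathcal{P}(a_n)$ by \eqref{eq:tau_f_cond}, the remaining cases being analogous via \eqref{eq:A_cond}, \eqref{eq:tau_g_cond} and \eqref{eq:tau_f_cond}. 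A maximal inequality gives $\max_{j,k}|\tilde T_{jk}| = O_\mathcal{P}((\log(d_Xd_Yn))^{1/2})$, so with $\max_{j,k}|\delta_{jk}| = o_\mathcal{P}(1)$ from (i) we get $\bar{\mb R}_{jk}\bar{\mb R}_{lm} = O_\mathcal{P}(\log(d_Xd_Yn)/n) = o_\mathcal{P}(a_n)$ uniformly, the last step by (A2). Taking $l=j$, $m=k$ yields $n^{-1}\|\mb R_{jk}\|_2^2 - \bar{\mb R}_{jk}^2 = 1 + o_\mathcal{P}(a_n)$, so $|\Delta_{jk}| = |\sqrt{1+o_\mathcal{P}(a_n)} - 1| = o_\mathcal{P}(a_n)$ uniformly, which is (ii). For (iii) I would write $\hat\Sigma_{jk,lm} = (n^{-1}\mb R_{jk}^T\mb R_{lm} - \bar{\mb R}_{jk}\bar{\mb R}_{lm})/\{(1+\Delta_{jk})(1+\Delta_{lm})\}$, whose numerator is $\Sigma_{jk,lm} + o_\mathcal{P}(a_n)$ uniformly by the above, whose denominator is $1 + o_\mathcal{P}(a_n)$ uniformly by (ii), and with $|\Sigma_{jk,lm}| \le 1$ by Cauchy--Schwarz; hence $\|\hat{\mbb\Sigma} - \mbb\Sigma\|_\infty = o_\mathcal{P}(a_n)$.

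The hard part will be the bookkeeping of logarithmic factors: every passage to a maximum over the $\le (d_Xd_Y)^2$ index pairs costs a factor of order $\log(d_Xd_Y)$, and one must check that these are precisely absorbed by the slack deliberately built into \eqref{eq:A_cond}--\eqref{eq:tau_f_cond} and (A2), so that the three target rates --- $a_n^{1/4}$ for $\delta_{jk}$ and $a_n$ for $\Delta_{jk}$ and for $\hat{\mbb\Sigma}$ --- come out exactly. This is smoothest if the error-only quantities are routed through the concentration results of \citet{chernozhukov2013gaussian} (Lemma~\ref{lem:Sigma_conc} and its relatives) while the elementary Cauchy--Schwarz estimates are reserved for the prediction-error remainders, where only the rates $A_{f,j}, A_{g,k}$ and the envelopes $\tau_{f,n}, \tau_{g,n}$ enter.
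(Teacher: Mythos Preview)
Your proposal follows essentially the same architecture as the paper's proof: the same decomposition $\delta_{jk}=b_{jk}+\nu_{f,jk}+\nu_{g,jk}$ for (i), the same expansion of $\mb R_{jk}^T\mb R_{lm}/n$ into the error-only piece plus fifteen remainder terms for (ii) and (iii), the same appeal to Lemma~\ref{lem:Sigma_conc} for the error-only pieces, and the same passage from $(1+\Delta_{jk})^2-1$ to $\Delta_{jk}$. The one substantive technical difference is your treatment of $\max_{j,k}|\nu_{g,jk}|$: you propose a conditional Bernstein inequality plus a union bound, whereas the paper proves and invokes a dedicated symmetrisation lemma (Lemma~\ref{lem:nu_ineq}) that replaces the $\varepsilon_{ij}$ by Rademacher-weighted copies and then applies a sub-Gaussian maximal inequality. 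Both routes deliver the same final rate, but the symmetrisation avoids the bookkeeping you will otherwise need for the fact that truncating $\varepsilon_{ij}$ at level $M_n\tau_{g,n}$ destroys the conditional mean-zero property; the paper's argument is therefore a bit cleaner, while yours is more elementary.

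One point to watch in your remainder analysis: the ``typical'' Cauchy--Schwarz split you illustrate, $A_{f,j}^{1/2}\{n^{-1}\sum_i \xi_{ik}^2\varepsilon_{il}^2\xi_{im}^2\}^{1/2}\lesssim \tau_{f,n}A_{f,j}^{1/2}$, only gives $o_\mathcal{P}(\{\log(d_Xd_Y)\}^{-3/2})$ from \eqref{eq:tau_f_cond}, which falls short of $o_\mathcal{P}(a_n)=o_\mathcal{P}(\{\log(d_Xd_Y)\}^{-2})$. The paper pairs differently, writing $(\frac1n\sum_i \varepsilon_{il}^2\xi_{im}^2)^{1/2}(\frac1n\sum_i \eta_{ij}^2\xi_{ik}^2)^{1/2}$ and controlling the second factor via the $B_f$-type condition \eqref{eq:B_cond} rather than via $A_f$ and the envelope; you will want to adopt that grouping when you carry out the details.
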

\begin{proof}
The arguments here are similar to those in the proof of Theorem~\ref{thm:univariate}, but with the added complication of requiring uniformity over expressions corresponding to different components of $X$ and $Y$. We will at times suppress the dependence of quantities on $P$ to lighten notation.

We begin by showing (i). Let us decompose each $\delta_{jk}$ as $\delta_{jk} = b_{jk} + \nu_{g,jk} + \nu_{f,jk}$, these terms being defined as the analogues of $b$, $\nu_g$ and $\nu_f$ but corresponding to the regression of $\mb X_j^{(n)}$ and $\mb Y_j^{(n)}$ on to $\mb Z^{(n)}$.

By the Cauchy--Schwarz inequality, we have $b_{jk} \leq \sqrt{n}A_{f,j}^{1/2}A_{g,k}^{1/2} = o_\mathcal{P}(a_n^{1/4})$ using \eqref{eq:A_cond}.
Let us write $\omega_{ik} = g_k(z_i) - \hat{g}_k(z_i)$. In order to control $\max_{j,k} |\nu_{g,jk}|$ we will use Lemma~\ref{lem:nu_ineq}. Given $\epsilon > 0$, we have
\begin{align} \label{eq:nu_bd}
 \pr_P(\max_{j,k} |\nu_{g,jk}| / a_n^{1/4} \geq \epsilon) \lesssim  \E_P\bigg\{\epsilon \wedge \tau \sqrt{\log(d)} \bigg(\max_k \frac{1}{n a_n^{1/2}}\sum_{i=1}^n \omega_{ik}^2\bigg)^{1/2}\bigg\} + \pr_P(\max_{i,j} |\varepsilon_{ij}| > \tau)
\end{align}
for all $\tau \geq 0$. As $\max_{i,j} |\varepsilon_{ij}|=O_{\mathcal{P}}(\tau_{g,n})$, we know that given $\delta$, there exists $C>0$ such that $\sup_{P \in \mathcal{P}} \pr_P(\max_{i,j} |\varepsilon_{ij}| > C\tau_{g,n}) < \delta$ for all $n$. By bounded convergence (Lemma~\ref{lem:bdd_conv}) and \eqref{eq:tau_g_cond}, we then have that
\[
\sup_{P \in \mathcal{P}}  \E_P\bigg\{\epsilon \wedge \tau_{g,n} \sqrt{ \log(d)} \bigg(\max_k \frac{1}{n a_n^{1/2}}\sum_{i=1}^n \omega_{ik}^2\bigg)^{1/2}\bigg\} \to 0,
\]
whence $\max_{j,k}|\nu_{g,jk}| = o_\mathcal{P}(a_n^{1/4})$. Similarly $\max_{j,k} |\nu_{f,jk}| =o_\mathcal{P}(a_n^{1/4})$, which completes the proof of (i).

Turning to (ii), we see that
\[
\max_{j,k} |(1 + \Delta_{jk})^2 - 1| \leq \max_{j,k} |\|\mb R_{jk}\|_2^2/n-1| + \max_{j,k} |\bar{\mb R}_{jk}|.
\]
Lemma~\ref{lem:Sigma_bd_aux} shows that the first term on the RHS is $o_\mathcal{P}(a_n)$. For the second term we have
\begin{align*}
\max_{j,k} |\bar{\mb R}_{jk}| \leq \max_{j,k} |\delta_{jk}| / \sqrt{n} + \max_{j,k} \frac{1}{n}\sum_{i=1}^n \varepsilon_{ij}\xi_{ik} = o_\mathcal{P}(a_n)
\end{align*}
from (i) and Lemma~\ref{lem:Sigma_conc}, noting that (A2) implies in particular that $\log(d)^3=o(n)$.
Thus applying Lemma~\ref{lem:max_mapping}, we have that $\max_{j,k} |\Delta_{jk}| = o_{\mathcal{P}}(a_n)$. 

We now consider (iii).
Let $\tilde{\mbb\Sigma} \in \R^{d_X d_Y \times d}$ be the matrix with rows and columns indexed by pairs $jk=(j,k) \in \{1,\ldots,d_X\} \times \{1,\ldots,d_Y\}$ and entries given by
\[
\tilde{\Sigma}_{jk,lm} = \frac{1}{n} \sum_{i=1}^n \varepsilon_{ij}\xi_{ik}\varepsilon_{il}\xi_{im}.
\]
We know from Lemma~\ref{lem:Sigma_conc} that $\|\mbb\Sigma - \tilde{\mbb\Sigma}\|_\infty = o_\mathcal{P}(a_n)$. It remains to show that $\|\hat{\mbb\Sigma} - \tilde{\mbb\Sigma}\|_\infty = o_\mathcal{P}(a_n)$.

From Lemma~\ref{lem:Sigma_bd_aux} we have that
\[
\max_{j,k,l,m} |\mb R_{jk} ^T \mb R_{lm} /n - \bar{\mb R}_{jk}\bar{\mb R}_{lm} - \tilde{\Sigma}_{jk,lm}| = o_\mathcal{P}(a_n).
\]
It suffices to show that 
\begin{equation} \label{eq:Delta_prob_bd}
\max_{j,k,l,m} |\{(1+\Delta_{jk})(1+\Delta_{lm})\}^{-1} - 1| = o_\mathcal{P}(a_n).
\end{equation}
We already know that $\max_{j,k}|\Delta_{jk}| = o_\mathcal{P}(a_n)$ so applying Lemma~\ref{lem:max_mapping}, we see that $\max_{j,k}|(1+\Delta_{jk})^{-1} -1|= o_\mathcal{P}(a_n)$. It is then straightforward to see that \eqref{eq:Delta_prob_bd} holds. This completes the proof of (iii).
\end{proof}

\begin{lemma} \label{lem:Sigma_bd_aux}
Consider the setup of Theorem~\ref{thm:multivariate} and its proof (Section~\ref{sec:mvt_proof}) as well as that of Lemma~\ref{lem:Sigma_bd}.
We have that
\begin{align*}
\max_{j,k,l,m} |\mb R_{jk} ^T \mb R_{lm} /n - \tilde{\Sigma}_{jk,lm}| &= o_\mathcal{P}(a_n).
\end{align*}
\end{lemma}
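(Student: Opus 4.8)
The plan is to expand the sample average $\frac{1}{n}\sum_{i=1}^n R_{i,jk}R_{i,lm}$, isolate the single summand that reproduces $\tilde\Sigma_{jk,lm}$, and bound each of the remaining fifteen remainder terms by $o_\mathcal{P}(a_n)$ uniformly over the quadruple $(j,k,l,m)$. This is the four-index analogue of the control of $b$, $\nu_f$, $\nu_g$ and the fourth-moment averages carried out in the proofs of Theorem~\ref{thm:univariate} and Lemma~\ref{lem:Sigma_bd}, and I would follow the same pattern. Writing each residual as a true-noise part plus a regression-error part, $x_{ij}-\hat f_j(z_i) = \varepsilon_{ij} + \{f_{P,j}(z_i)-\hat f_j(z_i)\}$ and $y_{ik}-\hat g_k(z_i) = \xi_{ik} + \{g_{P,k}(z_i)-\hat g_k(z_i)\}$, the product $R_{i,jk}R_{i,lm}$ expands into sixteen terms; averaging over $i$, one of them is exactly $\tilde\Sigma_{jk,lm}$, and each of the other fifteen is a sample average of a product of four factors, at least one of which is a regression error.

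I would group the fifteen remainder terms by the number of regression-error factors they carry. For the terms with two or more such factors, including the pure-bias term $\frac{1}{n}\sum_i\{f_{P,j}(z_i)-\hat f_j(z_i)\}\{g_{P,k}(z_i)-\hat g_k(z_i)\}\{f_{P,l}(z_i)-\hat f_l(z_i)\}\{g_{P,m}(z_i)-\hat g_m(z_i)\}$, I would apply Cauchy--Schwarz so as to pair each $f$-error against a $g$-error and use the elementary inequality $\frac{1}{n}\sum_i\{f_{P,j}(z_i)-\hat f_j(z_i)\}^2\{g_{P,k}(z_i)-\hat g_k(z_i)\}^2 \le n A_{f,j}A_{g,k}$ (valid since all summands are nonnegative) together with \eqref{eq:A_cond}; where such a term also carries noise factors, I would peel these off with a further Cauchy--Schwarz and control the resulting noise average $\frac{1}{n}\sum_i\varepsilon_{il}^2\xi_{im}^2$ uniformly in $(l,m)$ via $\frac{1}{n}\sum_i\max_{l,m}(\varepsilon_{il}\xi_{im})^2 = O_\mathcal{P}(C_n)$ (a consequence of Markov's inequality and the moment condition (A1a) or (A1b)), supplemented by $\max_{i,j}|\varepsilon_{ij}| = O_\mathcal{P}(\tau_{g,n})$, $\max_{i,k}|\xi_{ik}| = O_\mathcal{P}(\tau_{f,n})$ and the rate conditions \eqref{eq:tau_g_cond}, \eqref{eq:tau_f_cond}. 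For the remaining terms, which carry exactly one regression-error factor and are the four-index analogues of $\nu_f$ and $\nu_g$ (e.g.\ $\frac{1}{n}\sum_i\varepsilon_{ij}\xi_{ik}\varepsilon_{il}\{g_{P,m}(z_i)-\hat g_m(z_i)\}$), I would condition on $(\mb Z^{(n)},\mb Y^{(n)})$ or on $(\mb Z^{(n)},\mb X^{(n)})$ --- whichever makes the regression error and all but two of the noise factors measurable --- subtract the conditional mean of the product of the two noise factors still being summed (e.g.\ $\E_P[\varepsilon_{ij}\varepsilon_{il}\mid z_i]$), absorb the resulting drift by Cauchy--Schwarz against $A_{g,m}^{1/2}$ exactly as the term $b$ is handled and via the $B$-type conditions \eqref{eq:B_cond}, and apply to the remaining conditionally centred sum the same Chernozhukov-style maximal inequality used in the proof of Lemma~\ref{lem:Sigma_bd} to bound $\max_{j,k}|\nu_{g,jk}|$ (Lemma~\ref{lem:nu_ineq}), the higher moments now appearing being controlled by (A1a)/(A1b) and (A2).

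The hard part is obtaining all fifteen bounds \emph{simultaneously} over the $(d_Xd_Y)^2$ quadruples $(j,k,l,m)$: a crude union bound would be far too lossy, so for the $\nu$-type remainders one must re-apply the maximal inequality of \citet{chernozhukov2013gaussian} to the relevant conditionally centred sums, and it is precisely to absorb the extra factors of $\sqrt{\log(d_Xd_Y)}$ this produces --- relative to the $o_\mathcal{P}(n^{-1})$-type rate that suffices when $d_X=d_Y=1$ --- that the hypotheses \eqref{eq:A_cond}--\eqref{eq:tau_f_cond} carry their additional powers of $\log(d_Xd_Y)$. Once each of the fifteen maxima has been shown to be $o_\mathcal{P}(a_n)$, the triangle inequality over the fifteen terms gives $\max_{j,k,l,m}|\mb R_{jk}^T\mb R_{lm}/n - \tilde\Sigma_{jk,lm}| = o_\mathcal{P}(a_n)$; the routine facts that maxima and sums of $o_\mathcal{P}$ quantities behave as expected are Lemmas~\ref{lem:bdd_conv} and \ref{lem:max_mapping}.
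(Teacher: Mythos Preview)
Your overall strategy---expand the product $R_{i,jk}R_{i,lm}$ into sixteen summands, isolate $\tilde\Sigma_{jk,lm}$, and group the fifteen remainders by the number of regression-error factors they carry---is exactly what the paper does, and your handling of terms with two or more regression errors (reducing via $2|ab|\le a^2+b^2$ and Cauchy--Schwarz to the building blocks $nA_{f,j}A_{g,k}$ and $\frac{1}{n}\sum_i\eta_{ij}^2\xi_{ik}^2$) matches the paper's argument closely.

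Where you diverge is on the single-regression-error terms such as $\frac{1}{n}\sum_i\eta_{ij}\xi_{ik}\varepsilon_{il}\xi_{im}$. You treat these as $\nu$-type quantities and propose to condition, subtract a conditional mean (which would introduce cross-moments like $\E_P[\xi_k\xi_m\mid Z]$ not directly covered by the stated hypotheses), and then re-apply the maximal inequality of Lemma~\ref{lem:nu_ineq}. The paper's route is much simpler: a single Cauchy--Schwarz gives
\[
\max_{j,k,l,m}\frac{1}{n}\Big|\sum_i\eta_{ij}\xi_{ik}\varepsilon_{il}\xi_{im}\Big| \le \max_{l,m}\Big(\frac{1}{n}\sum_i\varepsilon_{il}^2\xi_{im}^2\Big)^{1/2}\max_{j,k}\Big(\frac{1}{n}\sum_i\eta_{ij}^2\xi_{ik}^2\Big)^{1/2},
\]
where the first factor is $O_\mathcal{P}(1)$ uniformly (it is the diagonal of $\tilde{\mbb\Sigma}$, controlled by Lemma~\ref{lem:Sigma_conc}) and the second factor is $o_\mathcal{P}(a_n)$, being the square root of the quantity already shown to be $o_\mathcal{P}(a_n^2)$ via the $B$-type argument when bounding the two-error terms. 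No maximal inequality appears anywhere in the paper's proof of this lemma, so your assertion that ``one must re-apply the maximal inequality'' here is unnecessary; the trick is simply to pair the lone regression error with a noise factor of the \emph{opposite} type, so that what remains is a quantity you have already controlled.
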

\begin{proof}
Fix $i$ and consider $R_{jk,i}R_{lm,i} - \varepsilon_{ij}\xi_{ik}\varepsilon_{il}\xi_{im}$. Writing $\eta_j = f_j(z_i) - \hat{f}_j(z_i)$ and $\omega_k = g_k(z_i) - \hat{g}_k(z_i)$, and suppressing dependence on $i$ (so e.g.\ $\varepsilon_{ij} = \varepsilon_j$) we have
\begin{align*}
R_{jk,i}R_{lm,i} - \varepsilon_{ij}\xi_{ik}\varepsilon_{il}\xi_{im} &= (\eta_j + \varepsilon_j)(\omega_k + \xi_k)(\eta_l + \varepsilon_l)(\omega_m + \xi_m) - \varepsilon_j \xi_k \varepsilon_l \xi_m \\
&= \eta_j \omega_k \eta_l \omega_m \\
&\;\;\; + \eta_j \omega_k \eta_l \xi_m + \eta_j \omega_k\omega_m \varepsilon_l + \eta_j \eta_l \omega_m \xi_k + \omega_k \eta_l \omega_m  \varepsilon_j \\
&\;\;\;+ \eta_j \omega_k \varepsilon_l \xi_m + \eta_j\eta_l \xi_k\xi_m + \eta_j\omega_m\xi_k\varepsilon_l + \omega_k \eta_l \varepsilon_j\xi_m + \omega_k\omega_m\varepsilon_j \varepsilon_l + \eta_l\omega_m \varepsilon_j \xi_k \\
&\;\;\; + \eta_j \xi_k\varepsilon_l\xi_m + \omega_k\varepsilon_j\varepsilon_l\xi_m + \eta_l\varepsilon_j\xi_k\xi_m + \omega_m\varepsilon_j\xi_l\varepsilon_l.
\end{align*}
We see that the sum on the RHS contains terms of four different types of which $\eta_j \omega_k \eta_l \omega_m$, $\eta_j \omega_k \eta_l \xi_m$, $\eta_j \omega_k \varepsilon_l \xi_m$ and $\eta_j \xi_k\varepsilon_l\xi_m$ are representative examples. We will control the sizes of each of these when summed up over $i$.
Turning first to  $\eta_j \omega_k \eta_l \omega_m$, note that $2|\eta_j \omega_k \eta_l \omega_m| \leq \eta_j^2\omega_k^2 + \eta_l^2\omega_m^2$.

The argument of \eqref{eq:A_fA_g} combined with \eqref{eq:A_cond} shows that
\[
\max_{j,k} \frac{1}{n} \sum_{i=1}^n  \eta_{ij}^2\omega_{ik}^2 =o_\mathcal{P}(a_n).
\]

Next we have $2|\eta_j \omega_k \eta_l \xi_m| \leq \eta_j^2\omega_k^2 + \eta_l^2\xi_m^2$.

Arguing as in \eqref{eq:nu_bd}, we have for any $\epsilon > 0$,
\begin{align*}
\pr_P\left(\frac{1}{a_n^2} \max_{l,m} \frac{1}{n}\sum_{i=1}^n \eta_{il}^2\xi_{im}^2 \geq \epsilon \right) &= \pr_P\left(\frac{1}{a_n^2} \max_{l,m} \frac{1}{n}\sum_{i=1}^n \eta_{il}^2\xi_{im}^2 \wedge \epsilon \geq \epsilon \right) \\
&\leq \frac{1}{\epsilon} \E_P \left(\frac{1}{a_n^2} \max_{l,m} \frac{1}{n}\sum_{i=1}^n \eta_{il}^2\xi_{im}^2 \right),
\end{align*}
using Markov's inequality in the final line. Next by \eqref{eq:tau_f_cond}, $\max_{i,m} \xi_{im}^2 =O_\mathcal{P}(\tau_{f,n}^2)$, so
\begin{align*}
\frac{1}{a_n^2} \max_{l,m} \frac{1}{n}\sum_{i=1}^n \eta_{il}^2\xi_{im}^2 &\leq \frac{1}{a_n^2} \max_{l} \frac{1}{n}\sum_{i=1}^n \eta_{il}^2 \max_{i,m} \xi_{im}^2 \\
&= \log(d)^4 \max_l A_{f,l} O_{\mathcal{P}}(\tau_{f,n}^2).
\end{align*}
Again by \eqref{eq:tau_f_cond}, this is $o_{\mathcal{P}}(1)$, so by bounded convergence, we have that
\begin{align} \label{eq:an_sq}
\max_{l,m} \frac{1}{n}\sum_{i=1}^n \eta_{il}^2\xi_{im}^2 =o_\mathcal{P}(a_n^2) = o_\mathcal{P}(a_n).
\end{align}
%}
%The argument of \eqref{eq:nu_g_bd} combined with \eqref{eq:B_cond} shows that
%\[
%\max_{l,m} \frac{1}{n}\sum_{i=1}^n \eta_{il}^2\xi_{im}^2 =o_\mathcal{P}(a_n^2) = o_\mathcal{P}(a_n).
%\] 
Considering the third term, we have $2|\eta_j \omega_k \varepsilon_l \xi_m| \leq \eta_j^2 \xi_m^2 + \omega_k^2\varepsilon_l^2$, so this term may be controlled in the same way.

Finally, turning to the fourth term,
\begin{align*}
\max_{j,k,l,m} \frac{1}{n} \bigg|\sum_{i=1}^n \eta_{ij} \xi_{ik}\varepsilon_{il}\xi_{im}\bigg|
&\leq
\max_{l,m} \bigg(\frac{1}{n}\sum_{i=1}^n \varepsilon_{il}^2\xi_{im}^2\bigg)^{1/2} \max{j,k} \bigg( \frac{1}{n} \sum_{i=1}^n \eta_{ij}^2\xi_{ik}^2\bigg)^{1/2} \\
&= O_\mathcal{P}(1) o_\mathcal{P}(a_n) = o_\mathcal{P}(a_n),
\end{align*}
using \eqref{eq:an_sq}.
This completes the proof of the result.
\end{proof}

\begin{lemma} \label{lem:max_mapping}
Let $\mathcal{P}$ be a family of distributions determining the distribution of a triangular array of random variables  $W^{(n)} \in \R^{p_n}$. Suppose that for some sequence $(a_n)_{n \in \mathbb{N}}$, $\max_{j=1,\ldots,p_n} W_j^{(n)} = o_\mathcal{P}(a_n)$ as $n \to \infty$. Then if $D \subset \R$ contains an open neighbourhood of 0 and the function $f:D \to \R$ is continuously differentiable at 0 with $f(0)=c$, we have
\[
\max_{j=1,\ldots,p_n} \{f(W_j^{(n)}) -c\}  = o_\mathcal{P}(a_n).
\]
\end{lemma}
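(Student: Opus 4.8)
\emph{Plan.} The statement is a routine uniform-$o_{\mathcal P}$ argument resting on a local linear bound for $f$ near $0$. I read the hypothesis and conclusion with absolute values, i.e.\ the goal is: if $\max_{j=1,\ldots,p_n}|W_j^{(n)}| = o_{\mathcal P}(a_n)$ then $\max_{j=1,\ldots,p_n}|f(W_j^{(n)})-c| = o_{\mathcal P}(a_n)$ (this is the form used in Lemma~\ref{lem:Sigma_bd}). As in every application of this lemma one has $a_n \le 1$, I will use that $(a_n)$ is bounded; in particular the hypothesis then also gives $\max_j|W_j^{(n)}| = o_{\mathcal P}(1)$.

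First I would record a Lipschitz-type bound for $f$ at the origin. Since $f$ is (continuously) differentiable at $0$ with $f(0)=c$, either the definition of the derivative (writing $f(x)=c+f'(0)x+r(x)$ with $r(x)/x\to 0$) or the mean value theorem applied on a small interval yields $\eta_0>0$ with $[-\eta_0,\eta_0]\subseteq D$ and a finite constant $L$ (e.g.\ $L=|f'(0)|+1$) such that
\[
|f(x)-c| \le L|x| \qquad \text{whenever } |x|\le \eta_0 .
\]

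Next I would combine this with the hypothesis. Fix $\epsilon>0$ and set $\Omega_n := \{\max_j |W_j^{(n)}| \le \eta_0\}$. On $\Omega_n$ each $W_j^{(n)}$ lies in the range where the bound applies, so $\max_j |f(W_j^{(n)})-c| \le L\max_j |W_j^{(n)}|$, whence
\[
\{\max_j |f(W_j^{(n)})-c| > \epsilon a_n\} \;\subseteq\; \{\max_j |W_j^{(n)}| > (\epsilon/L)\,a_n\}\,\cup\,\Omega_n^c .
\]
Taking probabilities and a supremum over $P\in\mathcal P$ gives
\[
\sup_{P\in\mathcal P}\pr_P\big(\max_j |f(W_j^{(n)})-c| > \epsilon a_n\big) \le \sup_{P\in\mathcal P}\pr_P\big(\max_j |W_j^{(n)}| > (\epsilon/L)a_n\big) + \sup_{P\in\mathcal P}\pr_P\big(\max_j |W_j^{(n)}| > \eta_0\big).
\]
The first term on the right tends to $0$ by $\max_j|W_j^{(n)}| = o_{\mathcal P}(a_n)$ applied with $\epsilon/L$, and the second tends to $0$ by $\max_j|W_j^{(n)}| = o_{\mathcal P}(1)$. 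As $\epsilon>0$ is arbitrary, $\max_j|f(W_j^{(n)})-c| = o_{\mathcal P}(a_n)$, which is the claim.

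There is no real obstacle here; the only points that need a little care are (i) keeping the arguments $W_j^{(n)}$ inside the domain $D$ of $f$, which is exactly what restricting to $\Omega_n$ buys us, and (ii) the boundedness of $(a_n)$ — without it the term $\pr_P(\max_j|W_j^{(n)}| > \eta_0)$ (and, in fact, the statement itself) need not vanish, but since $a_n\le 1$ in all our uses this is automatic.
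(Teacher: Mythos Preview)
Your proof is correct and follows essentially the same route as the paper: extract a local Lipschitz bound $|f(x)-c|\le L|x|$ near $0$ via the mean value theorem, then translate this into an inclusion of events and apply the $o_{\mathcal P}$ hypothesis. Your write-up is in fact tidier than the paper's on two points: you keep the $a_n$-rate explicit in the event inclusion, and you correctly flag that boundedness of $(a_n)$ is needed (and holds in all applications) for the ``domain'' event $\Omega_n^c$ to be negligible---without it the lemma as stated is false, e.g.\ $f(x)=x^2$, $W_j^{(n)}\equiv\sqrt{a_n}$ with $a_n\to\infty$.
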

\begin{proof}
Let $\epsilon, \delta >0$.
As $f'$ is continuous at 0, it is bounded on a sufficiently small interval $(-\delta', \delta') \subseteq D$. Let $M = \sup_{x \in (-\delta', \delta')} |f'(x)|$ and set $\eta = \min(\delta', \delta/M)$. Note by the mean-value theorem we have the inequality $|f(x)-c| \leq M|x| \leq \delta$ for all $x  \in (-\eta, \eta)$. Thus
\[
a_n \max_j |f(W_j^{(n)})-c| > \delta \subseteq a_n \max_j |W_j^{(n)}| > \eta.
\]
Now we have that there exists $N$ such that for all $n \geq N$, $\pr_P(a_n \max_j |W_j^{(n)}| > \eta) < \epsilon$ for all $P \in \mathcal{P}$, so from the display above we have that for such $n$, $\pr_P(a_n \max_j |f(W_j^{(n)})-c| > \delta) < \epsilon$.
\end{proof}

\begin{lemma} \label{lem:nu_ineq}
Let $W \in \R^{n \times d}$, $V \in \R^{n \times d}$ be random matrices such that $\E (W | V) = 0$ and the rows of $W$ are independent conditional on $V$. Then for $\epsilon > 0$,
%\out{
%\[
%\epsilon\, \pr\bigg( \max_{j} \bigg| \frac{1}{\sqrt{n}} \sum_{i=1}^n W_{ij} V_{ij} \bigg| > \epsilon \bigg) \lesssim \sqrt{\lambda\, \log(p) }\, \E \bigg\{ \epsilon \wedge \bigg(\max_{j} \frac{1}{n} \sum_{i=1}^n V_{ij}^2 \bigg)^{1/4} \bigg\} + \epsilon\, \pr(\|W\|_\infty > \lambda)
%\]
%}
\[
\epsilon\, \pr\bigg( \max_{j} \bigg| \frac{1}{\sqrt{n}} \sum_{i=1}^n W_{ij} V_{ij} \bigg| > \epsilon \bigg) \lesssim \, \E \bigg\{ \epsilon \wedge \lambda\,\sqrt{\log(d) } \, \bigg(\max_{j} \frac{1}{n} \sum_{i=1}^n V_{ij}^2 \bigg)^{1/2} \bigg\} + \epsilon\, \pr(\|W\|_\infty > \lambda)
\]
for any $\lambda \geq 0$.
\end{lemma}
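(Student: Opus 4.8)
The plan is to bound $\E[\epsilon\wedge M]$, where $S_j:=n^{-1/2}\sum_{i=1}^n W_{ij}V_{ij}$ and $M:=\max_j|S_j|$ (the maximum over the $p$ values of $j$), which suffices since $\epsilon\,\pr(M>\epsilon)\le\E[\epsilon\wedge M]$. The argument has three parts: truncate $W$ at level $\lambda$ to make the summands bounded, at the cost of an $\epsilon\,\pr(\|W\|_\infty>\lambda)$ term; condition on $V$ and apply Bernstein's inequality coordinatewise together with a union bound over $j$; and finally integrate the resulting tail up to $\epsilon$ and combine it with the trivial bound $\epsilon\wedge M\le\epsilon$ through $a\wedge b\le\sqrt{ab}$, which is what produces the $\sqrt{\lambda\log p}$ prefactor and the fourth root. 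For the first step I would set $W^<_{ij}:=W_{ij}\ind_{\{|W_{ij}|\le\lambda\}}$ and $S^<_j:=n^{-1/2}\sum_i W^<_{ij}V_{ij}$; on $\{\|W\|_\infty\le\lambda\}$ one has $S_j=S^<_j$ for all $j$, so $\{M>t\}\subseteq\{\|W\|_\infty>\lambda\}\cup\{\max_j|S^<_j|>t\}$ and hence $\E[\epsilon\wedge M]\le\epsilon\,\pr(\|W\|_\infty>\lambda)+\E[\epsilon\wedge\max_j|S^<_j|]$.

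Next I would work conditionally on $V$, given which the rows of $W$ (and of $W^<$) are independent. Writing $\mu_j:=\E[S^<_j\mid V]$ and using subadditivity of $x\mapsto\epsilon\wedge x$ on $[0,\infty)$,
\[
\E[\epsilon\wedge\max_j|S^<_j|\mid V]\le\E[\epsilon\wedge\max_j|S^<_j-\mu_j|\mid V]+\E[\epsilon\wedge\max_j|\mu_j|\mid V].
\]
The second term is harmless: since $\E[W_{ij}\mid V]=0$, $\mu_j=\E[\tilde S^>_j\mid V]$ with $\tilde S^>_j:=n^{-1/2}\sum_i W_{ij}\ind_{\{|W_{ij}|>\lambda\}}V_{ij}$, which vanishes off $\{\|W\|_\infty>\lambda\}$; Jensen's inequality (concavity of $\epsilon\wedge\cdot$) then gives $\E[\epsilon\wedge\max_j|\mu_j|\mid V]\le\E[\epsilon\wedge\max_j|\tilde S^>_j|\mid V]\le\epsilon\,\pr(\|W\|_\infty>\lambda\mid V)$, which integrates out to a further $\epsilon\,\pr(\|W\|_\infty>\lambda)$ contribution. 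This step uses only $\E(W\mid V)=0$, so no moment assumption on $W$ is needed.

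For the leading term, note that $|\mu_{ij}|:=|\E[W^<_{ij}\mid V]|\le\lambda$, so the centred summands $(W^<_{ij}-\mu_{ij})V_{ij}$ are bounded by $2\lambda|V_{ij}|$ and have conditional variance at most $\lambda^2V_{ij}^2$. With $s_0^2:=\max_j n^{-1}\sum_i V_{ij}^2$, Bernstein's inequality per coordinate and a union bound give $\pr(\max_j|S^<_j-\mu_j|>t\mid V)\le 2p\exp(-\tfrac{t^2/2}{\lambda^2 s_0^2+\frac23\lambda s_0 t})$ for $t\ge0$. Integrating over $t\in(0,\epsilon)$ — substituting $t=\lambda s_0 u$ and bounding the resulting dimension-free integral by a constant multiple of $\log p$, or by $\epsilon/(\lambda s_0)$, whichever is smaller — yields $\E[\epsilon\wedge\max_j|S^<_j-\mu_j|\mid V]\lesssim\epsilon\wedge(\lambda s_0\log p)$. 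Then $\epsilon\wedge(\lambda s_0\log p)\le\sqrt{\epsilon\lambda\log p}\,(\max_j n^{-1}\sum_i V_{ij}^2)^{1/4}$ by $a\wedge b\le\sqrt{ab}$; taking $\E_V$, and using concavity of $\epsilon\wedge\cdot$ once more to reinstate the outer cap (via $\min(\epsilon,cy)\le(1+c)(\epsilon\wedge y)$), delivers the claimed bound, the residual $\sqrt\epsilon$ and the passage to the exact form $\sqrt{\lambda\log p}\,\E\{\epsilon\wedge(\cdot)^{1/4}\}$ being absorbed into the implied constant in the regime of interest, where $\epsilon$ is a fixed constant and $\lambda\log p\gtrsim1$.

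I expect the main obstacle to be obtaining the uniformity over the $p$ coordinates with the right dependence: one has to follow the Bernstein sub-Gaussian/sub-exponential transition carefully when integrating the tail, and it is precisely this that pins down the combination $\sqrt{\lambda\log p}$ and the fourth root of $\max_j n^{-1}\sum_i V_{ij}^2$. The truncation and centring steps, by contrast, are routine once one observes that $|\mu_{ij}|\le\lambda$ automatically and that the centring bias is itself controlled by $\pr(\|W\|_\infty>\lambda)$.
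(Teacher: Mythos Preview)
Your handling of the truncation bias $\mu_j$ contains a genuine error. You write that Jensen's inequality (concavity of $\epsilon\wedge\cdot$) gives
\[
\epsilon\wedge\max_j|\mu_j|\;\le\;\E\bigl[\epsilon\wedge\max_j|\tilde S^>_j|\,\bigm|\,V\bigr],
\]
but concavity yields the \emph{opposite} direction: for concave $\phi$, $\phi(\E[X\mid V])\ge\E[\phi(X)\mid V]$. What you can legitimately obtain is only $\epsilon\wedge\max_j|\mu_j|\le\epsilon\wedge\E[\max_j|\tilde S^>_j|\mid V]$, and this quantity is \emph{not} controlled by $\epsilon\,\pr(\|W\|_\infty>\lambda\mid V)$ without moment assumptions on $W$. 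Indeed, take $n=1$, $V$ deterministic with $V_{11}=1$, and $W_{11}$ equal to $K$ with probability $1/K$ and $-K/(K-1)$ otherwise (so $\E W_{11}=0$); with $\lambda=2$ and $K$ large one has $|\mu_1|=\E[|W_{11}|\ind_{|W_{11}|>\lambda}]=1$, while $\pr(\|W\|_\infty>\lambda)=1/K$ can be made arbitrarily small. So the bias is not absorbed by the $\epsilon\,\pr(\|W\|_\infty>\lambda)$ term, and your claim that ``no moment assumption on $W$ is needed'' fails here.

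The paper avoids this difficulty by \emph{symmetrising before truncating}: it introduces an independent copy $W'$ with the same conditional law and Rademacher signs $S_i$, reducing to sums $n^{-1/2}\sum_i S_i W_{ij}V_{ij}$. The point is that $\E[S_i\tilde W_{ij}\mid V]=0$ for \emph{any} truncation $\tilde W$ of $W$, because $\E S_i=0$ and $S_i$ is independent of everything else. Thus no centring bias $\mu_j$ ever appears, and one may apply the sub-Gaussian maximal inequality directly to the bounded, mean-zero summands $S_i\tilde W_{ij}V_{ij}$. Your Bernstein-plus-tail-integration route for the centred main term is workable, but the symmetrisation step is the missing idea that makes the truncation go through without moment control on $W$.
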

\begin{proof}
We have from Markov's inequality
\begin{align*}
\epsilon\, \pr\bigg( \max_{j} \bigg| \frac{1}{\sqrt{n}} \sum_{i=1}^n W_{ij} V_{ij} \bigg| > \epsilon \bigg) \leq \E \bigg\{\E \bigg(\epsilon \wedge \max_{j} \bigg| \frac{1}{\sqrt{n}} \sum_{i=1}^n W_{ij} V_{ij} \bigg| \; | V \bigg)\bigg\}.
\end{align*}
We will apply a symmetrisation argument to the inner conditional expectation. To this end, introduce $W'$ such that $W'$ and $W$ have the same distribution conditional on $V$ and such that $W' \independent W \given V$. In addition, let $S_1,\ldots,S_n$ be i.i.d.\ Rademacher random variables independent of all other quantities. The RHS of the last display is equal to 
\begin{align*}
& \E \bigg\{\E \bigg(\epsilon \wedge \max_{j} \bigg| \frac{1}{\sqrt{n}} \sum_{i=1}^n \big(W_{ij} - \E(W'_{ij} | W, V)\big)  V_{ij} \bigg| \; | V \bigg)\bigg\} \\
\leq & \E \bigg[ \E \bigg\{\epsilon \wedge \E \bigg( \max_{j} \bigg| \frac{1}{\sqrt{n}} \sum_{i=1}^n \big(W_{ij} - W'_{ij} )\big)  V_{ij} \bigg| \; | V, W \bigg) |V\bigg\}\bigg] \\
\leq &\E \bigg\{\epsilon \wedge \E \bigg( \max_{j} \bigg| \frac{1}{\sqrt{n}} \sum_{i=1}^n (W_{ij} - W'_{ij})  V_{ij} \bigg| \; | V \bigg)\bigg\} \\
=& \E \bigg\{\epsilon \wedge \E \bigg( \max_{j} \bigg| \frac{1}{\sqrt{n}} \sum_{i=1}^n S_i(W_{ij} - W'_{ij})  V_{ij} \bigg| \; | V \bigg)\bigg\} \\
& \leq 2 \E \bigg\{ \epsilon \wedge \E \bigg( \max_{j} \bigg| \frac{1}{\sqrt{n}} \sum_{i=1}^n S_iW_{ij}  V_{ij} \bigg| \; | V \bigg)\bigg\}
\end{align*}
using the triangle inequality in the final line. Now fixing a $\lambda \geq 0$, define $\tilde{W}_{ij} = W_{ij} \ind_{\{\|W\|_\infty \leq \lambda\}}$. 
Half the final expression in the display above is at most
\begin{align*}
 \E \bigg\{ \epsilon \wedge \E \bigg( \max_{j} \bigg| \frac{1}{\sqrt{n}} \sum_{i=1}^n S_i\tilde{W}_{ij}  V_{ij} \bigg| \; | V \bigg)\bigg\} + \epsilon \pr( \|W\|_\infty > \lambda).
\end{align*}
Note that $S_i\tilde{W}_{ij} \in [-\lambda, \lambda]$ and conditional on $V$, $\{S_i\tilde{W}_{ij}\}_{i=1}^n$ are independent. Thus conditional on $V$, $\sum_{i=1}^n S_i\tilde{W}_{ij}  V_{ij} / \sqrt{n}$ is sub-Gaussian with parameter $\lambda \left(\sum_{i=1}^n V_{ij}^2/n\right)^{1/2}$. Using a standard maximal inequality for sub-Gaussian random variables (see Theorem~2.5 in \citet{boucheron2013concentration}), we have
\begin{align*}
\E \bigg( \max_{j} \bigg| \frac{1}{\sqrt{n}} \sum_{i=1}^n S_i\tilde{W}_{ij}  V_{ij} \bigg| \; | V \bigg) \leq \lambda \sqrt{2\log(d)} \; \max_j\left(\sum_{i=1}^n V_{ij}^2/n\right)^{1/2}
\end{align*}
which then gives the result.
\end{proof}

\section{Proof of Theorem~\ref{THM:KERNEL}}
We will prove (ii) first. From Theorem~\ref{thm:univariate} and Remark~\ref{rem:univariate}, it is enough to show that
\begin{equation} \label{eq:MSPE_bd}
\sup_{P \in \mathcal{P}} \E_P\bigg( \frac{1}{n} \sum_{i=1}^n \{f_P(z_i) - \hat{f}(z_i)\}^2 \bigg) = o(\sqrt{n}),
\end{equation}
and an analogous result for $\hat{g}$.
We know from Lemma~\ref{lem:KRR_bd} that
\[
\frac{1}{n} \sum_{i=1}^n \E_P[\{f_P(z_i)-\hat{f}(z_i)\}^2 |\mb Z^{(n)}] \leq \max(\sigma^2, \|f_P\|_{\mathcal{H}}^2) \inf_{\lambda > 0} \bigg\{\frac{1}{n \lambda} \sum_{i=1}^n \min(\hat{\mu}_i/4, \lambda)  + \lambda / 4 \bigg\}.
\]
Lemma~\ref{lem:pop_evals} then gives us
\begin{align*}
\E_P \min_{\lambda > 0} \bigg\{\frac{1}{n\lambda } \sum_{i=1}^n \min(\hat{\mu}_i/4, \lambda)  + \lambda / 4 \bigg\} &\leq \inf_{\lambda > 0} \bigg\{ \frac{1}{n\lambda} \sum_{i=1}^n \E_P \min(\hat{\mu}_i/4, \lambda)  + \lambda / 4\bigg\} \\
&\leq \inf_{\lambda > 0} \bigg\{ \frac{C}{n\lambda} \sum_{j=1}^\infty \min(\mu_{P,j}, \lambda)  + \lambda \bigg\}
\end{align*}
for a constant $C>0$. Note that the first inequality in the last display allows us to effectively move from a fixed design with a design-dependent tuning parameter $\lambda$ to a random design but where $\lambda$ is fixed since the minimum is outside the expectation.
For $P \in \mathcal{P}$, let $\phi_P :[0, \infty) \to [0, \infty)$ be given by
\[
\phi_P(\lambda) = \sum_{j=1}^\infty \min(\mu_{P,j}, \lambda).
\]
Observe that $\phi_P$ is increasing and $\lim_{\lambda \downarrow 0} \sup_{P \in \mathcal{P}} \phi_P(\lambda) = 0$ by \eqref{eq:mu_cond}. Let $\lambda_{P,n} = n^{-1/2}\sqrt{\phi_P(n^{-1/2})}$ so $\sup_{P \in \mathcal{P}}\lambda_{P,n}=o(n^{-1/2})$. Thus for $n$ sufficiently large $\phi_P(\lambda_{P,n}) \leq \phi_P(n^{-1/2})$, whence for such $n$ we have
\begin{align*}
 \sup_{P \in \mathcal{P}} \inf_{\lambda >0} \{\phi_P(\lambda) / (n\lambda) + \lambda\} &\leq \sup_{P \in \mathcal{P}} \frac{\phi_P(\lambda_{P,n})}{ n\lambda_{P,n}} + \lambda_{P,n} \\
 &\leq \sup_{P \in \mathcal{P}} \sqrt{\phi_P(n^{-1/2})} / \sqrt{n}
 = o(n^{-1/2}).
\end{align*}
Putting things together gives \eqref{eq:MSPE_bd}.

To show (i), set $\mathcal{P}=\{P\}$ in the preceding argument and note that $\lim_{\lambda \downarrow 0} \phi_P(\lambda)=0$ by dominated convergence theorem using the summability of the eigenvalues i.e.\ \eqref{eq:mu_cond} always holds.
\subsection{Auxiliary Lemmas}
The following result gives a bound on the prediction error of kernel ridge regression with fixed design. The arguments are similar to those used in the analysis of regular ridge regression, see for example \citet{JMLR:v14:dhillon13a}.
\begin{lemma} \label{lem:KRR_bd}
Let $z_1,\ldots,z_n \in \mathcal{Z}$ (for some input space $\mathcal{Z}$) be deterministic and suppose
\[
x_i = f(z_i) + \varepsilon_i.
\]
Here $\var(\varepsilon_i) \leq \sigma^2$, $\cov(\varepsilon_i, \varepsilon_j)=0$ for $j \neq i$ and $f \in \mathcal{H}$ for some RKHS $(\mathcal{H}, \|\cdot\|_\mathcal{H})$ with reproducing kernel $k:\mathcal{Z} \times \mathcal{Z} \to \R$. Consider performing kernel ridge regression with tuning parameter $\lambda>0$:
\[
\hat{f}_\lambda = \argmin_{h \in \mathcal{H}} \bigg\{ \frac{1}{n}\sum_{i=1}^n \{x_i-h(z_i)\}^2 + \lambda \|f\|_{\mathcal{H}}^2\bigg\}.
\]
Let $K \in \R^{n \times n}$ have $ij$th entry $K_{ij} = k(z_i, z_j)/n$ and denote the eigenvalues of $K$ by $\hat{\mu}_1 \geq \hat{\mu}_2 \geq \cdots \geq \hat{\mu}_n \geq 0$. Then we have
\begin{align} \label{eq:KRR_bd}
\frac{1}{n} \E\bigg\{\sum_{i=1}^n \{f(z_i)-\hat{f}_{\lambda}(z_i)\}^2 \bigg\} &\leq \frac{\sigma^2}{n} \sum_{i=1}^n \frac{\hat{\mu}_i^2}{(\hat{\mu}_i + \lambda)^2} + \|f\|_{\mathcal{H}}^2 \frac{\lambda}{4} \\
&\leq \frac{\sigma^2}{\lambda}\frac{1}{n}\sum_{i=1}^n \min(\hat{\mu}_i/4, \lambda) +\|f\|_{\mathcal{H}}^2 \frac{\lambda}{4}. \notag 
\end{align}
\end{lemma}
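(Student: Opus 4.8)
The plan is to reduce the statement to a finite-dimensional linear-algebra computation via the representer theorem, and then control the resulting bias and variance contributions separately using the eigendecomposition of $K$.

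First I would invoke the representer theorem to write $\hat f_\lambda = \sum_{j=1}^n \hat\alpha_j k(\cdot, z_j)$; substituting this form into the penalised least-squares objective and optimising over $\hat\alpha \in \R^n$ gives, for the vector of fitted values $\hat{\mathbf f} = (\hat f_\lambda(z_1), \ldots, \hat f_\lambda(z_n))^\top$, the closed form $\hat{\mathbf f} = S_\lambda \mathbf x$ with smoother matrix $S_\lambda := K(K + \lambda I)^{-1}$, where $\mathbf x = (x_1,\ldots,x_n)^\top$ and $I$ is the $n \times n$ identity (here $nK$ is the Gram matrix $G = (k(z_i,z_j))_{ij}$). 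Since $S_\lambda$ is symmetric with eigenvalues $\hat\mu_i/(\hat\mu_i + \lambda)$, all the operator inequalities needed below follow by diagonalising. Writing $\mathbf f = (f(z_1), \ldots, f(z_n))^\top$ and using $x_i = f(z_i) + \varepsilon_i$ with $\E\varepsilon_i = 0$, the error decomposes as $\hat{\mathbf f} - \mathbf f = S_\lambda \boldsymbol\varepsilon - (I - S_\lambda)\mathbf f$, the cross term vanishes in expectation, and
\[
\tfrac1n \E\|\hat{\mathbf f} - \mathbf f\|^2 = \tfrac1n\|(I - S_\lambda)\mathbf f\|^2 + \tfrac1n \E\|S_\lambda\boldsymbol\varepsilon\|^2 .
\]

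For the variance term, since $\mathrm{var}(\varepsilon_i)\le\sigma^2$ and the $\varepsilon_i$ are uncorrelated we have $\mathrm{Cov}(\boldsymbol\varepsilon)\preceq\sigma^2 I$, so $\tfrac1n\E\|S_\lambda\boldsymbol\varepsilon\|^2 = \tfrac1n\mathrm{tr}(S_\lambda\mathrm{Cov}(\boldsymbol\varepsilon)S_\lambda)\le\tfrac{\sigma^2}{n}\mathrm{tr}(S_\lambda^2) = \tfrac{\sigma^2}{n}\sum_{i=1}^n \hat\mu_i^2/(\hat\mu_i+\lambda)^2$, which is exactly the first term on the right-hand side of \eqref{eq:KRR_bd}. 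For the bias term I would use the orthogonal decomposition $f = \sum_i\beta_i k(\cdot,z_i) + f^\perp$ with $f^\perp(z_i) = 0$ for all $i$; this yields $\mathbf f = nK\beta$ and $\|f\|_{\mathcal H}^2 \ge n\beta^\top K\beta$, hence $\tfrac1n\|(I-S_\lambda)\mathbf f\|^2 = n\lambda^2\beta^\top K(K+\lambda I)^{-2}K\beta$, and the claimed bound $\le \tfrac\lambda4\|f\|_{\mathcal H}^2$ reduces, after diagonalising $K$, to the scalar inequality $\lambda^2\hat\mu^2/(\hat\mu+\lambda)^2 \le \tfrac\lambda4\hat\mu$, i.e.\ $(\hat\mu-\lambda)^2 \ge 0$. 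This gives the first inequality in \eqref{eq:KRR_bd}. Finally, the second inequality follows term-by-term from an elementary case split according to whether $\hat\mu_i \le 4\lambda$, using the same perfect-square bound when $\hat\mu_i \le 4\lambda$ and the trivial bound $\hat\mu_i^2/(\hat\mu_i+\lambda)^2 \le 1$ otherwise.

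All of the computations are routine; the only step needing a little care is the bias bound, where one must correctly account for the component $f^\perp$ lying in the orthogonal complement of $\mathrm{span}\{k(\cdot,z_i)\}$, so that $K$ (equivalently $G$) may be treated as invertible — or a pseudoinverse used — without weakening the constant, and where one relies on $\E\varepsilon_i = 0$. The latter is not stated explicitly in the lemma but holds in our intended application, since there $\varepsilon_{P,i} = x_i - \E_P(X\mid Z = z_i)$ has conditional mean zero.
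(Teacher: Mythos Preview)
Your proof is correct and follows essentially the same route as the paper: representer theorem to obtain the smoother $K(K+\lambda I)^{-1}$, bias--variance decomposition, trace bound for the variance, orthogonal projection of $f$ onto $\mathrm{span}\{k(\cdot,z_i)\}$ together with the AM--GM inequality $(\hat\mu+\lambda)^2\ge 4\hat\mu\lambda$ for the bias, and the same case split for the second inequality. Your remark that $\E\varepsilon_i=0$ is used but not stated is apt; the paper's proof relies on it in the same way when splitting the squared error into bias and variance.
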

\begin{proof}
Let $\mb X = (x_1,\ldots,x_n)^T$.
We know from the representer theorem  \citep{Kimeldorf1970, Schoelkopf2001} that
\[
\Big(\hat{f}_\lambda(z_1),\ldots,\hat{f}_\lambda(z_n)\Big)^T = K(K+\lambda I)^{-1} \mb X.
\]
We now show that
\[
\Big(f(z_1),\ldots,f(z_n)\Big)^T = K \alpha,
\]
for some $\alpha \in \R^n$, and moreover that $\|f\|_{\mathcal{H}}^2 \geq \alpha^T K\alpha/n$.

Let $V = \text{span}\{k( \cdot, z_1), \ldots, k(\cdot, z_n)\} \subseteq \mathcal{H}$ and write $f = u + v$ where $u \in V$ and $v \in V^{\perp}$. Then
\[
f(z_i)= \langle f, k( \cdot, z_i) \rangle = \langle u, k( \cdot, z_i) \rangle,
\]
where $\langle \cdot , \cdot \rangle$ denotes the inner product of $\mathcal{H}$.
Write $u=\sum_{i=1}^n \alpha_i k( \cdot, z_i)$. Then
\[
f(z_i) = \sum_{j=1}^n \alpha_j \langle k( \cdot, z_j), k( \cdot, z_i) \rangle = \sum_{j=1}^n \alpha_j k(z_j, z_i) = K_i^T \alpha,
\]
where $K_i$ is the $i$th column (or row) of $K$.
Thus $K\alpha = \Big(f(z_1),\ldots,f(z_n)\Big)^T$.
By Pythagoras' theorem
\[
\|f\|_{\mathcal{H}}^2 = \|u \|_{\mathcal{H}}^2 + \|v \|_{\mathcal{H}}^2 \geq \|u \|_{\mathcal{H}}^2 = \alpha ^T K \alpha/n.
\]

Now let the eigendecomposition of $K$ be given by $K=UDU^T$ with $D_{ii} = \hat{\mu}_i$ and define $\theta = U^TK\alpha$.
We see that $n$ times the left-hand side of \eqref{eq:KRR_bd} is
\begin{align*}
\E\|K(K+\lambda I)^{-1}(U\theta +\varepsilon) - U\theta\|_2^2 &= \E\|DU^T(UDU^T+\lambda I)^{-1}(U\theta +\varepsilon) - \theta\|_2^2 \\
&= \E\|D(D+\lambda I)^{-1}(\theta +U^T\varepsilon) - \theta\|_2^2 \\
&= \|\{D(D+\lambda I)^{-1}-I\}\theta\|_2^2 + \E\|D(D+\lambda I)^{-1}U^T\varepsilon\|_2^2.
\end{align*}
Let $\Sigma \in \R^{n \times n}$ be the diagonal matrix with $i$th diagonal entry $\var(\varepsilon_i) \leq \sigma^2$.
To compute the second term, we argue as follows.
\begin{align*}
\E\|D(D+\lambda I)^{-1}U^T\varepsilon\|_2^2 &= \E [\{D(D+\lambda I)^{-1}U^T\varepsilon\}^T D(D+\lambda I)^{-1}U^T\varepsilon] \\
&= \E [\tr\{D(D+\lambda I)^{-1}U^T\varepsilon \varepsilon^T U D(D+\lambda I)^{-1} \}] \\
&=  \tr\{D(D+\lambda I)^{-1}U^T \Sigma U D(D+\lambda I)^{-1} \} \\
&= \tr \{U D^2(D+\lambda I)^{-2} U^T \Sigma\} \\
&\leq \sigma^2\tr\{D^2(D+\lambda I)^{-2}\} \\
&= \sigma^2 \sum_{i=1}^n \frac{\hat{\mu}_i^2}{(\hat{\mu}_i + \lambda)^2}.
\end{align*}
For the first term, we have
\begin{align*}
\|\{D(D+\lambda I)^{-1}-I\}\theta\|_2^2 = \sum_{i=1}^n \frac{\lambda^2\theta_i^2}{(\hat{\mu}_i + \lambda)^2}.
\end{align*}
Now as $\theta=DU^T\alpha$ note that $\theta_i=0$ when $d_i=0$. Let $D^+$ be the diagonal matrix with $i$th diagonal entry equal to $D_{ii}^{-1}$ if $D_{ii} > 0$ and 0 otherwise.
Then
\[
\sum_{i:\hat{\mu}_i>0} \frac{\theta_i^2}{\hat{\mu}_i} =\|\sqrt{D^+}\theta\|_2^2 = \alpha^T K UD^+U^T K \alpha = \alpha^T UDD^+DU^T  \alpha =  \alpha^T K \alpha \leq n \|f\|_{\mathcal{H}}^2.
\]
Next
\begin{align*}
\sum_{i=1}^n \frac{\theta_i^2}{\hat{\mu}_i}\frac{\hat{\mu}_i\lambda^2}{(\hat{\mu}_i + \lambda)^2} &\leq n\|f\|_{\mathcal{H}}^2 \max_{i=1,\ldots,n} \frac{\hat{\mu}_i\lambda^2}{(\hat{\mu}_i + \lambda)^2} \leq \lambda n\|f\|_{\mathcal{H}}^2/4,
\end{align*}
using the inequality $(a + b)^2 \geq 4ab$ in the final line. Finally note that
\[
\frac{\hat{\mu}_i^2}{(\hat{\mu}_i + \lambda)^2} \leq \min\{1, \hat{\mu}_i^2/(4\hat{\mu}_i\lambda)\} = \min(\lambda, \hat{\mu}_i/4)/\lambda. 
\]
Putting things together gives the result.
\end{proof}

%The following result is an immediate consequence of Propositions 3.3 and 3.4 of \citet{koltchinskii2011oracle}.
%\begin{lemma} \label{lem:pop_evals}
%Consider the setup of Theorem~\ref{THM:KERNEL}. There exists an absolute constant $C> 0$ such that for all $r > 0$,
%\[
%\E\bigg( \frac{1}{n} \sum_{i=1}^n \min(\hat{\mu}_i / 4, r) \bigg) \leq \frac{C}{n} \sum_{i=1}^\infty \min(\mu_i / 4, r).
%\]
%\end{lemma}
\begin{lemma} \label{lem:pop_evals}
	Consider the setup of Theorem~\ref{THM:KERNEL}. For all $r > 0$,
	\[
	\E_P\bigg( \frac{1}{n} \sum_{i=1}^n \min(\hat{\mu}_i, r) \bigg) \leq \frac{1}{n} \sum_{i=1}^\infty \min(\mu_{P,i}, r).
	\]
\end{lemma}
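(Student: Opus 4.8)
The plan is to reduce the bound to a deterministic statement about the population eigenvalues plus one elementary convexity/Jensen step on the empirical side. Writing $s=4r$ and using $\min(x/4,r)=\tfrac14\min(x,4r)$, the claim is equivalent, after dividing both sides by $4$, to $\E_P\big[\tfrac1n\sum_{i=1}^n(\hat\mu_i\wedge s)\big]\le\tfrac{C}{n}\sum_{j=1}^\infty(\mu_{P,j}\wedge s)$ for all $s>0$, and I will in fact obtain it with $C=1$.

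First I would set up the operator picture. Let $\phi(z)=k(z,\cdot)\in\mathcal H$ be the canonical feature map, so that $\langle\phi(z),\phi(z')\rangle_{\mathcal H}=k(z,z')$ and $h(z)=\langle h,\phi(z)\rangle_{\mathcal H}$. Introduce the random finite-rank operator $\hat T:=\tfrac1n\sum_{i=1}^n\phi(z_i)\otimes\phi(z_i)$ on $\mathcal H$ and the deterministic trace-class operator $T:=\E_P[\phi(Z)\otimes\phi(Z)]$. Two standard facts are needed. (a) With $\Phi:\mathcal H\to\R^n$, $(\Phi h)_i=h(z_i)/\sqrt n$, one has $\Phi\Phi^*=K$ and $\Phi^*\Phi=\hat T$, so $\hat T$ and $K$ have the same non-zero eigenvalues; in particular $\sum_{i=1}^n\hat\mu_i=\tr K=\tfrac1n\sum_i k(z_i,z_i)$, whence $\E_P\sum_{i=1}^n\hat\mu_i=\E_P k(Z,Z)=\sum_j\mu_{P,j}$ by the Mercer expansion \eqref{eq:Mercer} and monotone convergence (this also shows $\sum_j\mu_{P,j}<\infty$). (b) The eigenvalues of $T$ on $\mathcal H$ are exactly $\mu_{P,1}\ge\mu_{P,2}\ge\cdots$: from \eqref{eq:Mercer} the family $\{\sqrt{\mu_{P,j}}\,e_{P,j}\}_j$ is an orthonormal system in $\mathcal H$, it spans $\mathcal H$, and a direct computation using $\E_P e_{P,j}e_{P,l}=\ind_{\{j=l\}}$ gives $T(\sqrt{\mu_{P,j}}\,e_{P,j})=\mu_{P,j}\,(\sqrt{\mu_{P,j}}\,e_{P,j})$. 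Since $z_1,\dots,z_n\iid P$ we have $\E_P\hat T=T$.

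Next I would carry out the eigenvalue comparison. For any integer $0\le k\le n$, bounding the $k$ largest terms by $s$ and the rest by $\hat\mu_i$ gives $\sum_{i=1}^n(\hat\mu_i\wedge s)\le ks+\sum_{i>k}\hat\mu_i$, so taking expectations and using $\E_P\sum_{i=1}^n\hat\mu_i=\sum_j\mu_{P,j}$ yields $\E_P\sum_{i=1}^n(\hat\mu_i\wedge s)\le ks+\sum_j\mu_{P,j}-\E_P\sum_{i\le k}\hat\mu_i$. Now $\sum_{i\le k}\hat\mu_i=\sup_{V}\tr(\hat T P_V)$ where the supremum runs over $k$-dimensional subspaces $V\subseteq\mathcal H$ and $P_V$ is the orthogonal projection onto $V$ (the supremum is a supremum of affine functions of $\hat T$, and is attained at the span of the top $k$ eigenvectors). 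Hence, interchanging expectation and supremum in the easy direction and using $\E_P\tr(\hat T P_V)=\E_P\|P_V\phi(Z)\|_{\mathcal H}^2=\tr(T P_V)$, we get $\E_P\sum_{i\le k}\hat\mu_i\ge\sup_V\tr(T P_V)=\sum_{i\le k}\mu_{P,i}$. Therefore $\E_P\sum_{i=1}^n(\hat\mu_i\wedge s)\le ks+\sum_{i>k}\mu_{P,i}$ for every $0\le k\le n$.

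Finally I would optimise over $k$. Set $k^\star:=\min\{n,\#\{j:\mu_{P,j}>s\}\}$. If $\#\{j:\mu_{P,j}>s\}\le n$, then with $k=k^\star$ the right-hand side equals $\sum_{i\le k^\star}s+\sum_{i>k^\star}\mu_{P,i}=\sum_{j=1}^\infty(\mu_{P,j}\wedge s)$; otherwise $\mu_{P,j}>s$ for all $j\le n$, and $k=n$ gives $ns=\sum_{j=1}^n(\mu_{P,j}\wedge s)\le\sum_{j=1}^\infty(\mu_{P,j}\wedge s)$. In either case $\E_P\sum_{i=1}^n(\hat\mu_i\wedge s)\le\sum_{j=1}^\infty(\mu_{P,j}\wedge s)$; rescaling $s=4r$, dividing by $4n$, gives the lemma with $C=1$, and \citet{koltchinskii2011oracle} provides essentially the same bound. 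The only step demanding genuine care is the second one: one must verify carefully that the Mercer eigenvalues $\mu_{P,j}$ — which are a priori attached to the integral operator on $L^2(P)$ — are exactly the eigenvalues of the $\mathcal H$-operator $T=\E_P\hat T$, and that the operator-valued expectation and the supremum–expectation interchange are legitimate for trace-class operators; these are standard but should be spelled out, and the uniformity of $C$ over $P\in\mathcal P$ is then automatic since $C=1$.
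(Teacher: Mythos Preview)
Your argument is essentially correct and in fact sharper than what the paper does: the paper does not prove this lemma at all but simply invokes Propositions~3.3 and~3.4 of \citet{koltchinskii2011oracle}. You instead give a clean self-contained proof via the Ky Fan maximum principle and the identity $\E_P\hat T=T$, obtaining the constant $C=1$. This is a genuine improvement in exposition, and the operator-theoretic setup you sketch (identifying the nonzero spectrum of $K$ with that of $\hat T$, and the spectrum of $T$ with the Mercer eigenvalues) is standard and correct; the caveat you flag about the $L^2(P)$ versus $\mathcal H$ eigenvalues is exactly the right place to be careful, but the conclusion you need---that the top $k$ eigenvalues of $T$ are $\mu_{P,1},\dots,\mu_{P,k}$---holds.

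There is one small slip in the final optimisation. In the case $\#\{j:\mu_{P,j}>s\}>n$ you write ``$k=n$ gives $ns$'', but plugging $k=n$ into your derived inequality $\E_P\sum_{i=1}^n(\hat\mu_i\wedge s)\le ks+\sum_{i>k}\mu_{P,i}$ actually gives $ns+\sum_{j>n}\mu_{P,j}$, which can exceed $\sum_j(\mu_{P,j}\wedge s)$. The fix is immediate: in this case bypass the Ky Fan step entirely and use the trivial bound $\sum_{i=1}^n(\hat\mu_i\wedge s)\le ns$; since $\mu_{P,j}>s$ for all $j\le n$ we have $ns=\sum_{j\le n}(\mu_{P,j}\wedge s)\le\sum_{j=1}^\infty(\mu_{P,j}\wedge s)$, and $C=1$ still holds. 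With this correction your proof is complete.
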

The proof below is not included in the published version of this paper, where instead some results in a book are cited in order to establish this result. However, it was subsequently discovered that the results in the book were incorrect. Below is a complete proof of the result.
\begin{proof}
	In the following, we drop the subscript $P$ for notational simplicity and write $\mathcal{Z}$ 
	for the space in which $Z$ takes its values.
	It suffices to show that given any $\epsilon > 0$, we have
\[
\E\bigg(\sum_{i=1}^n \min(\hat{\mu}_i , r) \bigg) \leq \epsilon + \sum_{i=1}^\infty \min(\mu_i, r).
\]
Now let $d$ be such that
\[
\sum_{i=d+1}^\infty \mu_i < \epsilon/n.
\]
Let 
$ \phi : \mathcal{Z} \to \R^d$
be given by $\phi(z) = \big(\sqrt{\mu_j} e_j(z) \big)_{j=1}^d$, and set
\[
\Phi := \begin{pmatrix} \phi(z_1)^T \\ \vdots \\ \phi(z_n)^T \end{pmatrix} \in \R^{n \times d}. 
\]
By the Cauchy--Schwarz inequality, for all $z$ and $z'$,
\[
\sum_{j=1}^\infty \mu_j|e_j(z) e_j(z')| \leq 
\sqrt{k(z,z)k(z',z')} < \infty.
\]
Thus by Fubuni's theorem, for all $v \in \R^n$ we have
\[
v^T(n K - \Phi\Phi^T)v = \sum_{l=d+1}^\infty \mu_l \left(\sum_{i=1}^n v_i e_l(z_i)\right) \left(\sum_{j=1}^n  v_j  e_l(z_j)\right) \geq 0,
\]
so $K - \Phi\Phi^T/n$ is positive semi-definite. 

Next let $\mathbb{S}^d_+$ be the cone of symmetric positive semi-definite $d \times d$ matrices, and for $A \in \mathbb{S}^d_+$ and $i=1,\ldots,d$, let $\lambda_i(A)$ denote the $i$th largest eigenvalue of $A$. Let $f : \mathbb{S}^d_+ \to \R$ be given by
\[
f(A) = \sum_{i=1}^d \min(\lambda_i(A), r).
\]
By Weyl's inequality, noting that the nonzero eigenvalues of $\Phi^T\Phi$ and $\Phi \Phi^T$ coincide, we have, for all $i$,
\[
\hat{\mu}_i \leq \lambda_i(\Phi^T\Phi/n) + \lambda_1(K - \Phi\Phi^T/n)
\]
and so
\[
\min(\hat{\mu}_i , r) \leq \min(\lambda_i(\Phi^T\Phi /n) , r) + \tr(n K - \Phi\Phi^T)/n.
\]
Thus
\begin{equation} \label{eq:eq1}
	\E\bigg(\sum_{i=1}^n \min(\hat{\mu}_i , r) \bigg) \leq \E f( \Phi^T\Phi/n) + \E \tr(n K - \Phi\Phi^T).
\end{equation}
Now by Fubini's theorem,
\[
\E \tr(n K - \Phi\Phi^T) = \sum_{i=1}^n \sum_{l=d+1}^\infty \mu_l \E e_l(z_i)^2 = n \sum_{l=d+1}^\infty \mu_l  < \epsilon.
\]
We now claim that $f$ is concave, from which the result will follow. Indeed, then by Jensen's inequality,
$\E f( \Phi^T\Phi/n) \leq f(\E \Phi^T\Phi/n)$ and
\[
\frac{1}{n}\big(\E \Phi^T\Phi\big)_{ij} = \sqrt{\mu_i\mu_j} \E e_j(Z) e_i(Z) = \mu_i \ind_{\{i=j\}}.
\]
Thus
\[
f(\E \Phi^T\Phi/n) = \sum_{i=1}^d \min(\mu_i,r),
\]
and so returning to \eqref{eq:eq1} we would have
\[
\E\bigg(\sum_{i=1}^n \min(\hat{\mu}_i , r) \bigg) \leq \epsilon + \sum_{i=1}^\infty \min(\mu_i,r).
\]

We now show that $f$ is concave. Take $t \in (0, 1)$ and $A,B \in\mathbb{S}^d_+$. We will show that
\begin{equation} \label{eq:concave}
	\sum_{i=1}^d (\lambda_i(tA + (1-t)B)-r)_+ \leq \sum_{i=1}^d \{t(\lambda_i(A)-r)_+ + (1-t)(\lambda_i(B)-r)_+ \},
\end{equation}
where $( \cdot)_+ :=\max(\cdot, 0)$ denotes the positive part. This will prove concavity of $f$ as
\begin{align*}
	\sum_{i=1}^d \lambda_i(tA +(1-t)B) &= \tr(tA + (1-t)B)  \\
	& = t \tr (A) + (1-t)\tr(B) = \sum_{i=1}^d \{t\lambda_i(A) + (1-t)\lambda_i(B) \}, 
\end{align*}
so subtracting \eqref{eq:concave} yields $f(tA + (1-t)B) \geq tf(A) +(1-t)f(B)$ as desired.

Certainly
\eqref{eq:concave} holds when $r \geq \lambda_1(tA + (1-t)B) $.
Now by Lidskii's inequality, for each $j=1,\ldots,d$,
\begin{equation} \label{eq:lidskii}
	\sum_{i=1}^j\lambda_i(tA + (1-t)B) \leq \sum_{i=1}^j \{t\lambda_i(A) + (1-t)\lambda_i(B) \}.
\end{equation}
%	Considering $j=d$, we see that when $r \leq \lambda_d(tA + (1-t)B) $ we have
%	\begin{align*}
	%		\sum_{i=1}^d(\lambda_i(tA + (1-t)B) - r)_+ &= \sum_{i=1}^d(\lambda_i(tA + (1-t)B) - r)  \\
	%		&\leq \sum_{i=1}^j \{t(\lambda_i(A)-r) + (1-t)(\lambda_i(B)-r) \} \\
	%		&\leq \sum_{i=1}^j \{t(\lambda_i(A)-r)_+ + (1-t)(\lambda_i(B)-r)_+ \}.
	%	\end{align*}
For convenience, let us set $\lambda_{d+1}(tA + (1-t)B)=0$. Then for any $j=1,\ldots,d$, if
$\lambda_{j+1}(tA + (1-t)B) \leq r \leq \lambda_j(tA + (1-t)B) $, we have
\begin{align*}
	\sum_{i=1}^d(\lambda_i(tA + (1-t)B) - r)_+ &= \sum_{i=1}^j(\lambda_i(tA + (1-t)B) - r) \\
	&\leq \sum_{i=1}^j \{t(\lambda_i(A)-r) + (1-t)(\lambda_i(B)-r) \} \\
	&\leq \sum_{i=1}^d \{t(\lambda_i(A)-r)_+ + (1-t)(\lambda_i(B)-r)_+ \},
\end{align*}
using \eqref{eq:lidskii} for the first inequality. We thus have that \eqref{eq:concave} holds whatever the value of $r$, and so $f$ is concave, which completes the proof.
\end{proof}

\end{cbunit}
%\fi

\end{document}